\numberwithin{equation}{section}
\newtheorem{theorem}{Theorem}[section]
\newtheorem{lemma}[theorem]{Lemma}
\newtheorem{proposition}[theorem]{Proposition}
\newtheorem{definition}[theorem]{Definition}
\newdefinition{remark}{Remark}
\newproof{pf}{Proof}
\newcounter{examp}
\newenvironment{examp}
  {\refstepcounter{examp}\par\noindent\textbf{Example \arabic{examp}:}\ \rmfamily}
  {\medskip}
\journal{ }
\DeclareMathOperator{\Real}{Re}
\DeclareMathOperator{\sign}{sign}
\newtheoremstyle{nonitalic}
{3pt}
{3pt}
{\normalfont}
{}
{\bfseries}
{}
{.5em}
{}
\theoremstyle{nonitalic}
\newcounter{hypothesis}
\newtheorem{hyp}[hypothesis]{}
\newcommand{\as}[1]{\left\vert#1\right\vert}
\newcommand{\norm}[1]{\left\Vert#1\right\Vert}
\newcommand{\grad}{\nabla}
\newcommand{\dx}{{\rm d}x}
\newcommand{\dxi}{{\rm d}\xi}
\begin{document}
	
	\begin{frontmatter}
		
  \author[label6]{Yurij Salmaniw\fnref{label1}}
  \affiliation[label6]{organization={
Mathematical Institute, University of Oxford},
	city={ Oxford},
	postcode={OX2 6GG},
	country={UK}}

 \author[label2,label5]{Di Liu\fnref{label1}\corref{cor1}}
  \ead{dliu@hznu.edu.cn}
 \cortext[cor1]{Corresponding author}
 \affiliation[label2]{organization={School of Mathematics, Hangzhou Normal University},
 	city={Hangzhou},
 	postcode={311121},
 	country={PR China}}
   \affiliation[label5]{organization={School of Mathematics and Statistics, Central South University},
 	city={Changsha},
 	postcode={410083},
 	country={PR China}}
  \fntext[label1]{Co-first author.}

\author[label4]{Junping Shi}
  \affiliation[label4]{organization={Department of Mathematics, William $\&$ Mary},
 	city={Williamsburg},
 	postcode={VA 23187-8795},
 	country={USA}}

 \author[label3]{Hao Wang}
  \affiliation[label3]{organization={Department of Mathematical and Statistical Sciences, University of Alberta},
	city={Edmonton},
	postcode={AB T6G 2G1},
	country={Canada}}
  
  \title{Dynamics of a coupled nonlocal PDE-ODE system with spatial memory: well-posedness, stability, and bifurcation analysis}

\begin{abstract}
Nonlocal aggregation-diffusion models, when coupled with a spatial map, can capture cognitive and memory-based influences on animal movement and population-level patterns. In this work, we study a one-dimensional reaction-diffusion-aggregation system in which a population’s spatiotemporal dynamics are tightly linked to a separate, dynamically updating map. Depending on the local population density, the map amplifies and suppresses certain landscape regions and contributes to directed movement through a nonlocal spatial kernel.

After establishing the well-posedness of the coupled PDE-ODE system, we perform a linear stability analysis to identify critical aggregation strengths. We then perform a rigorous bifurcation analysis to determine the precise solution behavior at a steady state near these critical thresholds, deciding whether the bifurcation is sub- or supercritical and the stability of the emergent branch. Based on our analytical findings, we highlight several interesting biological consequences. First, we observe that whether the spatial map functions as attractive or repulsive depends precisely on the map's relative excitation rate versus adaptory rate: when the excitatory effect is larger (smaller) than the adaptatory effect, the map is attractive (repulsive). Second, in the absence of growth dynamics, populations can only form a single aggregate. Therefore, the presence of intraspecific competition is necessary to drive multi-peaked aggregations, reflecting higher-frequency spatial patterns. Finally, we show how subcritical bifurcations can trigger abrupt shifts in average population abundance, suggesting a tipping-point phenomenon in which moderate changes in movement parameters can cause a sudden population decline.
\end{abstract}

\begin{keyword}
  Reaction-diffusion-aggregation system \sep coupled nonlocal PDE-ODE system \sep well-posedness \sep bifurcation \sep stability \sep patterned solutions
\MSC[2020] 		35B32 \sep 35K57 \sep 35B35 \sep 35B36 \sep 92D25
			
\end{keyword}
		
	\end{frontmatter}
	
\section{Introduction}
Aggregation-diffusion equations and systems have been used widely in the biological sciences, including cell-cell adhesion \cite{ButtenschoenHillen2021,Buttenschoen2018SpaceJump,falco2022local}, crowd dynamics \cite{muntean2014collective,MR1698215}, biological aggregations \cite{MR2257718} and ecology \cite{cantrell2003spatial}, appearing earlier in physics \cite{barends1997groundwater}, fluid dynamics \cite{paterson1981first}, chemical engineering \cite{bird2007transport}, and phase separation in materials science \cite{MR1453735}. Many such mean-field equations can be obtained from their direct connection with discrete dynamics described by stochastic differential equations \cite{Carrillo2020LongTime,kolokolnikov2013emergent, motsch2014heterophilious,pareschi2013interacting,MR3858403} or random walk processes \cite{potts2016territorial, PottsLewis2016}. More recently, movement affected by nonlocal detection of external stimuli has been incorporated into ecological movement models as a way to capture cognitive influences known to play an important role in determining animal movement behavior \cite{WangYurij2023JMB, FaganLewis2013EL, Lewis2021}.

We are interested in studying the evolution of a population $u(x,t)$ described by the following general nonlocal aggregation-diffusion equation with birth/death:
	\begin{equation}\label{1.1}
	u_t = \grad \cdot \left( d \grad u + \alpha  u \grad ( G * k) \right) + f(u).
	\end{equation}
    Population locomotion is described by two distinct forces: random exploratory movement, expressed by linear diffusion at constant rate $d>0$, and directed movement at rate $\alpha \in \mathbb{R}$ corresponding to a nonlocal potential $G*k(x,t)$ (see \eqref{1.3}). The function $f(u)$ is either identically zero or of logistic type; see Hypothesis \textbf{\ref{H1a}}. When population growth is absent, the total population is conserved, and we explore the influence of movement only. This assumes that the movement dynamics occur on a timescale much faster than the growth dynamics, as is sometimes the case for larger mammals \cite{PottsLewis2019}. With population growth included, we investigate the influence of growth dynamics paired with complex movement mechanisms. Essential to this perspective is the role of the spatial map $k(x,t)$, which we develop as follows.

\subsection{The spatial map}

    Often the quantity $k(x,t)$ is known \textit{a priori}, such as a resource distribution \cite{Fagan2017perceptual, cantrell2003spatial}, or may depend directly on the population density $u$ as in classical aggregation-diffusion equations, i.e., $k:= u$ as in \cite{Ducrot2014, DucrotFuMagal2018,Carrillo2020LongTime}; or more generally $k:= \Phi(u)$. In other cases, $k$ may itself evolve dynamically according to an auxiliary differential equation, as in the well-studied case of (chemo)taxis, for which $k$ describes the evolution of a (chemical) signal deposited by population $u$ \cite{Stevens1997, Hillen2008, Buttenschoen2018SpaceJump}. 

    	The present work considers the case where $k$ is modulated dynamically in response to the local density $u$ through a linear ordinary differential equation. The evolution of $k$ incorporates two opposing processes: an \textit{excitation} (or “positive encoding”) component described by $g_1(u)$, and an \textit{adaptation} (or “negative encoding”) component described by $g_2(u)$. Here, $g_1, \, g_2 : \mathbb{R}^+ \rightarrow \mathbb{R}^+$ are given functions which describe how members of the population $u$ encode a spatial location within the map $k$ in response to the (local) population density $u$, see Hypotheses \textbf{\ref{H2a}}-\textbf{\ref{H2b}}. The excitation process $g_1(\cdot)$ causes the map $k$ to increase; the adaptation process $g_2 (\cdot)$, on the other hand, causes the map to decrease. The balance of these two processes then determines the net effect on $k$, which is described by the following ordinary differential equation:
        \begin{equation}\label{1.2}
	k_t = g_1(u) - g_2(u) k.
	\end{equation}
    
    The form of \eqref{1.2} appears in several modeling contexts. In \cite{lewis1993modelling}, $k$ is used to describe the evolution of scent marks on the landscape deposited by wolves through Raised Leg Urination (RLU). In this case, $k$ is a map of scents distributed on the landscape, known to influence the movement of interacting wolf packs. While this form is primarily phenomenological, similar forms for $k$ are derived from a random walk process for some special cases of $g_i(\cdot)$, see \cite{potts2016territorial, PottsLewis2016}. In \cite{potts2016territorial}, $k$ is a spatial map that records areas (`conflict zones') of direct interaction (`conflicts') between members of differing populations. In \cite{ErbanOthmer2004}, an ordinary differential equation of the form \eqref{1.2} describes the internal state of an E. Coli cell, modulating how likely the cell is to reorient in space. This internal state is then paired with a transport equation describing the movement of cells. The form of \eqref{1.2} may also be of interest to models of neuroscience describing networks of neurons \cite{carrillo2022noise,carrillo2023noise}: it is now understood that there exist neurons, the so-called \textit{place cells} \cite{okeefe1971hippocampus} and \textit{grid cells} \cite{hafting2005microstructure}, which modulate their firing rate according to the location of the organism. Moreover, studies suggest the firing rate of visual cortical neurons can modulate according to ``attention" or whether stimulus appears alone or paired with distractors \cite{reynolds2004attentional}. In this context, equation \eqref{1.2} assumes a correlation between population density and firing rates of neurons so that regions in space are recorded or suppressed according to the excitatory and adaptatory properties of $g_1$ and $g_2$, respectively.

\subsection{Nonlocal effects}
    
Implicit in model \eqref{1.1} is an assumption that the entire population $u$ has access to the same information held within the spatial map $k$. This is observed across a wide array of taxa, from bacteria \cite{Bassler2002, Waters2005} to insects, birds, and larger mammals \cite{Couzin2009}. To improve the realism of our model, we assume that the population has access to the entire spatial map $k$, but has better access to information near the current point of occupation. Bias in the movement of the population is then given by the nonlocal potential $G*k$, where $G$ is a spatial kernel. For notational brevity, we will use the notation $\overline{k} (x,t)$ to denote the spatial convolution:
	\begin{equation}\label{1.3}
	\overline{k} (x,t) := G\ast k (x,t) = \int_\mathbb{R} G(x-y) k(y,t) dy,
	\end{equation}
	   Kernels appearing in nonlocal aggregation-diffusion equations can vary widely; we do not attempt to be comprehensive here. Prototypical kernels found in ecological applications include the top-hat kernel, the exponential kernel, or the Gaussian kernel, each of which can be obtained from a general exponential power distribution \cite{Fagan2017perceptual} (see also \cite{WangYurij2023JMB}). In the present work, we study the exponential detection function
	\begin{equation}\label{0}
		G(x) := \frac{1}{2R} e^{- |x|/ R},
	\end{equation}
	   where $R\geq0$ is the so-called \textit{perceptual radius} or \textit{sensing radius}, roughly describing the maximum distance at which landscape features can be identified  \cite{Fagan2017perceptual}. $G(\cdot)$ can also be interpreted as the probability of detecting a landscape feature at location $y$, while located at a point $x$, depending on the distance $|x-y|$: for a fixed distance $|x-y|$, increasing the perceptual radius corresponds to a relative increase in the probability of detection. In practice, the exponential detection function corresponds to the detection of, e.g., electromagnetic fields. For example, many marine animals can detect the electric and magnetic fields caused by electromagnetic survey measurements \cite{NYQVIST2020104888}. Exponential detection functions can also link sensory systems with spatial representation and large-scale natural navigation \cite{GevaNature}. From an analysis perspective, the exponential function possesses some desirable properties as it can be viewed as the fundamental solution to a particular elliptic PDE, see Proposition \ref{prop:equivalentsystem}.

\subsection{The coupled nonlocal PDE-ODE system}
    
	Combining equation \eqref{1.1} with equations \eqref{1.2}-\eqref{0}, we study the following coupled nonlocal PDE-ODE system in one spatial dimension:
	\begin{equation}\label{1}
	\begin{cases}
	u_t = d u_{xx} + \alpha ( u \overline{k}_{x} )_{x} + f(u), & x\in \mathbb{R}, \ t>0, \\
	k_t = g_1(u) - g_2(u) k, & x \in \mathbb{R}, \ t> 0, \\
	u(x,0)=u_0(x), k(x,0)=k_0(x),& x \in \mathbb{R}. \\
	\end{cases}
	\end{equation}
 Our goal is to describe the behaviour of solutions to \eqref{1} and its associated steady states depending on the encoding processes described by $g_i(\cdot)$, the population dynamics described by $f(\cdot)$, and the relative strength of diffusion and aggregation described by the parameters $d$ and $\alpha$, respectively. 

 The remainder of the paper is organized as follows. In Section \ref{sec:mainresults}, we state our key hypotheses and present our main results. In Section \ref{sec:interpretationofmainresults}, we explore some of the key biological insights gained from our analysis.
 Section \ref{sec:prelim} contains preliminary materials, notations, and notions of solution to the time-dependent problem \eqref{1}. Section \ref{sec:wellposed} is dedicated to the proof of well-posedness of the system, and we prove our bifurcation theorems in Section \ref{sec:biftheory}. To complement our theoretical insights, we explore some of the quantitative properties of solutions through numerical simulation in Section \ref{sec:apps}. Finally, in Section \ref{sec:nogrowth}, we briefly discuss some of the technical details behind the case with no growth dynamics. We move some technical but known arguments to the Appendix \ref{sec:appendix} to maintain a reasonable length of exposition while remaining self-contained.

\section{Main Results}\label{sec:mainresults}

\subsection{Hypotheses}\label{sec:hypotheses}
\noindent For simplicity, we will always assume the initial data are even, smooth functions:
\begin{align}\label{Hinitialdata}
    0 \lneqq u_0,\, k_0 \in C^3(\mathbb{R}) \cap L^2_{per} (\mathbb{R}).
\end{align}
We study those solutions that are even and periodic over $(-\pi,\pi)$. The periodicity and evenness ensure that the linearized problem has a simple eigenvalue and that the problem is defined on a compact interval so that the bifurcation theory of Crandall-Rabinowitz can be applied. Then, by considering solutions periodically extended over all of $\mathbb{R}$, the spatial convolution given by \eqref{1.3} is well-defined. This is a natural setting to study the problem due to technical challenges in analyzing such nonlocal models with a physical boundary \cite{WangYurij2023JMB}, which we do not explore further in the present work.

For notational brevity, we will denote the right-hand side of \eqref{1.2} as
$$
w(u,k):=g_1(u) - g_2(u) k,
$$ 
and denote by
$$
w_{k*}:=\frac{\partial w}{\partial k}\biggr\vert_{(u_*,k_*)}, \quad w_{u*} := \frac{\partial w}{\partial u} \biggr\vert_{(u_*,k_*)},
$$
where $(u_*,k_*)$ is the unique positive constant stationary state of \eqref{1} whose existence is guaranteed by Hypotheses \textbf{\ref{H1a}} and \textbf{\ref{H2a}} below. We assume that $f,g\in C^3(\mathbb{R})$ for simplicity. Our key hypotheses throughout the remainder of the manuscript are as follows.
\begin{hyp}\label{H1a}
		$f: \mathbb{R}^+ \rightarrow \mathbb{R}$ satisfies the following: $f(0)=f(u_*)=0$, \, $f(u) > 0$ for $0<u<u_*$, $f(u) < 0$ for $u_* < u < \infty$, \,  and $f_{u*}:=f_u(u_*)<0$.
\end{hyp}
\begin{hyp}\label{H1b}
  $f(u) \leq f^\prime(0) u$, \, and there exist constants $C>0$, $p \geq 1$ such that $\as{f(u)} \leq C (1 + u^p)$ for all $u \geq 0$.
\end{hyp}
\begin{hyp}\label{H2a}
		For $i=1,2$, $g_i : \mathbb{R}^+ \rightarrow \mathbb{R}^+$ satisfy $g_1(0) = 0 \leq g_2(0)$, and there holds \, $w(u_*,k_*)=0$.
\end{hyp}
\begin{hyp}\label{H2b} 
    There exists constants $C,M>0$, $p\geq1$, (possibly different from those of \textbf{\ref{H1b}}) such that $g_2(u) \leq C (1 + u^p)$ and $g_1(u) \leq M g_2 (u)$ for all $u \geq 0$.
\end{hyp}

\subsection{Statement of main results}\label{sec:statementofresults}

One of the primary tools used to obtain the results in this work is the following equivalence between solutions to the nonlocal system \eqref{1} and solutions to an auxiliary \textit{local} Neumann problem \eqref{1.18} when restricted to our particular class of solutions, i.e., those that are even and periodic functions of space. This is motivated by the approach used in, e.g., \cite{Britton1989,Britton1990SIAM,GourleyBritton1996} or \cite[Lemma 1]{ShiShiWang2021JMB}, where a nonlocal kernel of exponential type is featured, and their nonlocal problem is transformed to an equivalent local system.

\begin{proposition}[Equivalence to a local system]\label{prop:equivalentsystem}
    Assume $(u_0,k_0)$ are even functions satisfying \eqref{Hinitialdata}. The following equivalence holds. 
    \begin{enumerate}[i.)]
        \item Suppose $(u,k)$ is an even, $2\pi$-periodic classical solution solving system \eqref{1}. Then, $(U,K,V) := (u, k, G*k)$ restricted to the interval $(0,\pi)$ is a classical solution of the following local parabolic-ordinary-elliptic system 
\begin{equation}\label{1.18}
   \begin{cases}
U_t = d U_{xx} + \alpha ( U V_{x})_{x} + f(U), & x\in (0,\pi), \ t>0, \\
K_t = g_1(U) - g_2(U) K, & x \in (0,\pi), \ t> 0, \\
0 = V_{xx}-\frac{1}{R^2}(V-K), & x \in (0,\pi), \ t> 0,\\
U_x(0,t)=U_x(\pi,t)=0,\ V_x(0,t)=V_x(\pi,t)=0,\ & t>0,
\end{cases} 
\end{equation} 
with initial data $(U(x,0), K(x,0), V(x,0)) = (u_0,k_0,G*k_0)$.
\item Suppose $(U,K,V)$ is a classical solution solving the local Neumann problem \eqref{1.18} with initial data $(U(x,0), K(x,0), V(x,0)) = (u_0, k_0, G* k_0)$. Then, the pair $(u,k) := (U,K)$, reflected over $(-\pi,0)$ and periodically extended to $\mathbb{R}$ is an even, $2\pi$-periodic classical solution solving system \eqref{1}.
    \end{enumerate}
\end{proposition}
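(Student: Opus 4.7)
The plan is to use the fact that the exponential kernel $G(x)=\tfrac{1}{2R}e^{-|x|/R}$ is (up to normalization) the fundamental solution of the elliptic operator $-\partial_{xx}+R^{-2}$ on $\mathbb{R}$. A direct distributional computation gives
\begin{equation*}
G_{xx}(x)-\frac{1}{R^2}G(x)=-\frac{1}{R^2}\delta(x),
\end{equation*}
the delta coming from the jump $[G']_{0}=-R^{-2}$ at the origin. Convolving against $k(\cdot,t)$ therefore yields $V_{xx}-R^{-2}V=-R^{-2}k$ wherever the convolution is smooth, which is exactly the third equation of \eqref{1.18} after rearrangement. This identification will be the backbone of both directions.

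For part (i), I would first set $V:=G\ast k$ and simply observe that the first two equations of \eqref{1.18} are literal restrictions of the first two equations of \eqref{1}. The elliptic equation for $V$ is obtained from the distributional identity above. The work lies in checking the Neumann conditions. For $U=u\!\upharpoonright_{(0,\pi)}$ these follow immediately from evenness and $2\pi$-periodicity of $u$. For $V$ I would note that $G$ is even, so $G'$ is odd, and that the evenness plus $2\pi$-periodicity of $k$ forces symmetry about $0$ \textit{and} about $\pi$. Writing $V_x(x_0)=\int_{\mathbb{R}} G'(x_0-y)k(y,t)\,dy$ and changing variables, the integrand becomes odd in $y$ about $y=x_0$ for $x_0\in\{0,\pi\}$, so the integral vanishes.

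For part (ii), I would extend $U,K$ solving \eqref{1.18} evenly across $x=0$ and periodically with period $2\pi$ to obtain $u,k$ on $\mathbb{R}$; the $C^{3}$ Neumann conditions together with the compatibility of the equations under reflection guarantee that the extensions remain classical. Setting $\widetilde V:=G\ast k$, the distributional identity above shows that $\widetilde V$ satisfies $\widetilde V_{xx}-R^{-2}(\widetilde V-k)=0$ on all of $\mathbb{R}$, and the symmetry argument from part (i) shows that $\widetilde V$ is even and $2\pi$-periodic, hence satisfies homogeneous Neumann conditions at $0$ and $\pi$. Thus $\widetilde V$ and $V$ solve the same linear elliptic Neumann problem on $(0,\pi)$ with the same data $k$; uniqueness (standard for $-\partial_{xx}+R^{-2}$ with Neumann data, since the operator is coercive) gives $V=\widetilde V$ on $(0,\pi)$. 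Substituting back into the first equation of \eqref{1.18} and using the reflection/periodic extensions yields \eqref{1} on $\mathbb{R}$.

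I expect the only genuinely delicate step to be the symmetry argument establishing the Neumann boundary condition for $V=G\ast k$ at $x=\pi$, since it requires using periodicity and evenness in tandem to get symmetry of $k$ about $\pi$, as well as carefully handling the odd-function cancellation in an integral over all of $\mathbb{R}$. The rest is bookkeeping: confirming that the convolution is well-defined under the integrability assumed in \eqref{Hinitialdata}, and that the even-periodic extensions in part (ii) inherit the required regularity from the Neumann boundary data of $U,K$.
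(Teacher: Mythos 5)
Your proposal is correct, and part (i) is essentially the paper's argument made explicit (the paper simply asserts "direct computation" where you spell out the distributional identity $G_{xx}-R^{-2}G=-R^{-2}\delta$ and the odd-symmetry cancellation at $x=0,\pi$; the latter is a nice explicit justification of the Neumann condition for $V$ that correctly exploits evenness of $k$ about $0$ and, via evenness combined with $2\pi$-periodicity, about $\pi$). Where you genuinely diverge is in part (ii). The paper, having reflected and periodically extended $(U,K,V)$ to $(u,k,v)$ on $\mathbb{R}$, treats the elliptic equation for $v$ as a linear second-order ODE in $x$ and writes down the general solution
\begin{equation*}
v(x,t)=G*k(x,t)+A(t)e^{x/R}+B(t)e^{-x/R},
\end{equation*}
then argues periodicity of $v$ forces $A\equiv B\equiv 0$, identifying $v=G*k$. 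You instead build $\widetilde V:=G*k$ directly, check that it solves the same Neumann problem on $(0,\pi)$ with the same right-hand side $K$, and invoke uniqueness of the coercive Neumann problem for $-\partial_{xx}+R^{-2}$ to conclude $V=\widetilde V$. Both routes are valid and of comparable length. The paper's argument is elementary and self-contained but relies on the explicit general-solution formula for a constant-coefficient ODE (and implicitly on the extended $v$ being regular enough across $x=0,\pi$ for the equation to hold pointwise on all of $\mathbb{R}$); your argument is structurally cleaner, never leaves the interval $(0,\pi)$ until the very end, and generalizes more readily to situations where an explicit formula for the general solution is not available, at the mild cost of invoking a standard uniqueness theorem for elliptic boundary-value problems.
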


\begin{proposition}[Equivalence of stability between nonlocal and local systems]\label{prop:equivalenceofstability}
Suppose $(u_*,k_*)$ and $(u_*,k_*,v_*)$ are the constant steady states of system \eqref{1} and \eqref{1.18}, respectively. Then the constant steady state  $(u_*,k_*)$ of \eqref{1} has the same linear stability as the constant steady state $(u_*,k_*,v_*)$ with respect to \eqref{1.18}.
\end{proposition}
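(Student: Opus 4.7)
The plan is to linearize both systems at their respective constant steady states and show that, after expanding even, $2\pi$-periodic perturbations in a Fourier cosine series, the resulting eigenvalue problems coincide mode-by-mode. Writing $u=u_*+\phi$, $k=k_*+\psi$ in \eqref{1}, and analogously $U=u_*+\phi$, $K=k_*+\psi$, $V=v_*+\chi$ in \eqref{1.18}, and using $w(u_*,k_*)=0$ and $f(u_*)=0$, the two linearizations read
\begin{equation*}
\phi_t = d\phi_{xx} + \alpha u_*(G*\psi)_{xx} + f_{u*}\phi, \qquad \psi_t = w_{u*}\phi + w_{k*}\psi,
\end{equation*}
and
\begin{equation*}
\phi_t = d\phi_{xx} + \alpha u_*\chi_{xx} + f_{u*}\phi, \quad \psi_t = w_{u*}\phi + w_{k*}\psi, \quad 0 = \chi_{xx}-R^{-2}(\chi-\psi),
\end{equation*}
respectively, the latter subject to homogeneous Neumann conditions on $(0,\pi)$.

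Next, I would seek solutions of the form $(\phi,\psi,\chi)=e^{\lambda t}(a_n,b_n,c_n)\cos(nx)$ for $n=0,1,2,\ldots$, since $\{\cos(nx)\}_{n\geq 0}$ forms a complete orthogonal basis for even, $2\pi$-periodic functions. A direct calculation using the definition \eqref{0} of $G$ together with the oddness of $\sin$ shows
\begin{equation*}
\int_\mathbb{R}\frac{1}{2R}e^{-|x-y|/R}\cos(ny)\,dy = \frac{\cos(nx)}{1+n^2R^2}.
\end{equation*}
On the other hand, substituting the cosine ansatz into the elliptic equation gives $c_n=b_n/(1+n^2R^2)$; that is, $(I-R^2\partial_{xx})^{-1}$ with Neumann data realizes \emph{exactly} the same Fourier multiplier. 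Eliminating $c_n$ in the local case then reduces both linearized problems, at each mode $n$, to the identical $2\times 2$ eigenvalue problem
\begin{equation*}
\det\begin{pmatrix} -dn^2 + f_{u*} - \lambda & -\dfrac{\alpha u_* n^2}{1+n^2R^2} \\[4pt] w_{u*} & w_{k*}-\lambda \end{pmatrix}=0.
\end{equation*}

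Since the characteristic polynomials at every mode agree, the two linear operators share the same spectrum, and the constant steady states share the same linear stability. The only point that genuinely requires care is justifying that the cosine modes capture the entire spectrum of the nonlocal linearization on the space of even, $2\pi$-periodic perturbations. This in turn reduces to the joint diagonalization of $\partial_{xx}$ and $G*\cdot$ by $\{\cos(nx)\}_{n\geq 0}$, which follows from the multiplier identity above together with the elementary observation that convolution with the even, integrable kernel $G$ preserves evenness and $2\pi$-periodicity. I do not expect this to be the main obstacle: the heart of the argument is the identification of the two Fourier multipliers, which makes the equivalence of linear stability transparent.
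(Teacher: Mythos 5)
Your proof is correct and follows essentially the same route as the paper: both linearize at the constant state, diagonalize over even $2\pi$-periodic perturbations via the cosine basis, and observe that the exponential kernel $G$ acts as the Fourier multiplier $1/(1+n^2R^2)$, which is exactly what the Neumann elliptic equation for $V$ produces, so the two linearizations yield identical $2\times 2$ characteristic polynomials at every wavenumber $n$. The paper packages this as two spectral lemmas (Lemmas~\ref{lem:spectral1} and \ref{lem:spectral2}) showing $T(n^2)=B(n^2)$ and $D(\alpha,n^2)=C(\alpha,n^2)$ and deferring the $\sigma=\sigma_p$ completeness claim to an external reference, whereas you flag the completeness of the cosine system explicitly at the end — a reasonable stylistic difference, not a substantive one.
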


In \cite{Britton1989,Britton1990SIAM,GourleyBritton1996}, nonlocality in space, time, or both simultaneously is considered, where the nonlocality appears in the population dynamics describing nonlocal competition. This is much different from problem \eqref{1}, where the nonlocal potential $G*k$ appears at the highest order (i.e., within the advective term with first and second derivatives). In \cite{ShiShiWang2021JMB} the nonlocality appears at the level of aggregation terms, but the kernel is purely temporal, i.e., it is nonlocal in time only. In these works, special properties of the exponential kernel are used to convert the problem from a nonlocal system to a local one, giving a result similar to our Proposition \ref{prop:equivalentsystem}. Under this equivalence, we can prove the following well-posedness result. 

\begin{theorem}[Existence of unique classical solution]\label{thm:wellposed}
    Suppose hypotheses \textbf{\textup{\ref{H1b}}}-\textbf{\textup{\ref{H2b}}} hold. Then there exists a global weak solution $(u,k)$ solving problem \eqref{1} in the sense of Definition \ref{def:weaksoln}. Moreover, the solution is the unique, global classical solution in the sense of Definition \ref{def:classicalsolution}.
\end{theorem}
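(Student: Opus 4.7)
The plan is to exploit Proposition \ref{prop:equivalentsystem} and work entirely with the equivalent local Neumann problem \eqref{1.18} on the compact interval $(0,\pi)$, which is far more amenable to classical parabolic theory than the nonlocal formulation. The third equation in \eqref{1.18}, together with its Neumann boundary conditions, is a linear elliptic problem defining a bounded solution operator $T : K \mapsto V$ satisfying $\|V\|_{W^{2,\infty}} \leq C\|K\|_{L^\infty}$ by standard elliptic regularity. Substituting $V = T[K]$ reduces \eqref{1.18} to a coupled parabolic-ODE system for $(U,K)$ in which the drift velocity $(T[K])_x$ is controlled by $K$ itself, and this structural decoupling is what will prevent Keller--Segel-type blow-up.

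Local existence will be obtained by a standard contraction-mapping argument. Given $(\tilde U, \tilde K)$ in a small ball of, say, $C([0,T_0]; C^{1+\beta}(0,\pi))^2$, I would first solve the ODE for $K$ explicitly via an integrating factor (with $\tilde U$ frozen in the coefficients), then solve the linear parabolic PDE for $U$ with drift $\alpha (T[\tilde K])_x$ and source $f(\tilde U)$ using Schauder/semigroup theory. Hypotheses \textbf{\ref{H1b}} and \textbf{\ref{H2b}} furnish the Lipschitz control of $f, g_1, g_2$ on bounded sets needed to close the contraction for $T_0$ sufficiently small, yielding a unique local-in-time classical solution; iterating produces a maximal time of existence $T_{\max} \in (0,\infty]$.

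Global existence then reduces to uniform a priori estimates on $[0,T]$ for arbitrary $T < T_{\max}$. Integrating the $U$-equation over $(0,\pi)$ and using the no-flux boundary conditions together with $f(u) \leq f'(0) u$ from Hypothesis \textbf{\ref{H1b}} gives $\tfrac{d}{dt}\|U\|_{L^1} \leq f'(0)\|U\|_{L^1}$, so the $L^1$ norm grows at most exponentially. For $K$, combining the ODE structure $K_t = g_1(U) - g_2(U) K$ with the sublinear ratio $g_1 \leq M g_2$ from Hypothesis \textbf{\ref{H2b}} and a standard ODE comparison argument yields the pointwise bound $0 \leq K(x,t) \leq \max\{M, \|k_0\|_{L^\infty}\}$ uniformly in $t$. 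Consequently, the drift $\alpha (T[K])_x$ is bounded uniformly in $L^\infty$, and a Moser-type iteration (using the polynomial growth $|f(u)| \leq C(1+u^p)$ from \textbf{\ref{H1b}}) upgrades the $L^1$ control of $U$ to $L^\infty$ on any finite time interval. Schauder estimates applied successively to the parabolic equation for $U$ and the elliptic equation for $V$ then bootstrap to the classical regularity required by Definition \ref{def:classicalsolution}, and uniqueness follows from a Gronwall estimate on the difference of two solutions. The weak existence claim of the theorem follows a fortiori.

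The main obstacle I anticipate is controlling the aggregation term $\alpha (U V_x)_x$, which in related cross-diffusion systems is the source of finite-time concentration phenomena. The saving feature here is that the elliptic equation for $V$, combined with the adaptation hypothesis \textbf{\ref{H2b}}, completely decouples the $L^\infty$ bound on $K$ (and hence on $V_x$) from the dynamics of $U$: no matter how large $U$ becomes transiently, the drift in its own equation remains uniformly bounded in space-time, which prevents singularity formation and permits the local solution to be continued to all $t > 0$.
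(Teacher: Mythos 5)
Your strategy is sound and, importantly, it isolates the same structural fact that makes the problem tractable: the ODE for $K$ and Hypothesis \textbf{\ref{H2b}} yield an $L^\infty$ bound on $K$ that is completely independent of $U$, and through the elliptic equation this caps $V_x$ in $L^\infty$, so the advective term can never drive Keller--Segel-type concentration. That observation is the heart of the paper's proof as well. But the route you take around it is genuinely different. The paper does not perform a local-in-time contraction argument at all: it sets up a Picard-type scheme in which only the ODE for $k_m$ is lagged (the coefficient $g_i(u_{m-1})$ is frozen, but the $u_m$ equation is solved as a fully nonlinear problem at each step), derives uniform $W^{2,1}_2$, $H^1_t L^\infty_x$, and $L^\infty_t W^{2,p}_x$ bounds on the iterates by the a priori estimates of Lemma~\ref{lem:aprioriestimates1}, and then extracts a weak solution by compactness before upgrading it to a strong and finally classical one by parabolic $L^p$ theory, the $t$-anisotropic Sobolev embedding, and Schauder estimates. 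You instead fully linearize the system, close a contraction in a H\"older space to get a local classical solution directly, and then continue it globally via a priori estimates. Both work; the paper's route gives a weak solution as an intermediate object for free (which the theorem asserts separately), while your route is arguably more elementary for readers familiar with quasilinear parabolic local theory but hands you the weak solution only a fortiori.

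Two small technical points you should tighten. First, the bound $\|V\|_{W^{2,\infty}} \leq C\|K\|_{L^\infty}$ is not a consequence of generic elliptic regularity (Calder\'on--Zygmund gives $W^{2,p}$ for $p<\infty$, not $p=\infty$, from $L^\infty$ data); it is true here only because you can read $V_{xx} = R^{-2}(V-K)$ directly off the equation once you know $\|V\|_{L^\infty}\le\|K\|_{L^\infty}$ by the maximum principle. State that explicitly. Second, the Moser iteration for the $L^\infty$ bound on $U$ should lean on $f(u)\leq f'(0)u$ rather than on the polynomial upper bound $|f(u)|\leq C(1+u^p)$; the latter is an absolute bound and would not help close a Moser scheme if $p>1$, whereas the linear one-sided control plus the uniformly bounded drift makes the equation effectively sublinear and the iteration standard. (The paper sidesteps Moser entirely by going through $W^{2,1}_2$ and the anisotropic embedding, which is another viable choice.) Also keep in mind that in the fully linearized contraction step the source $f(\tilde U)$ is not sign-definite, so nonnegativity of $U$ must be verified after the fact (e.g.\ Stampacchia truncation applied to the limit solution of the true nonlinear equation), not assumed during the fixed-point argument.
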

\noindent\textbf{Remark.} \textit{Note that our well-posedness result does not rely on Hypothesis \textbf{\textup{\ref{H1a}}} , and thus includes the no-growth case  $f(u) \equiv 0$.}\\

We observe that the value of $u_*$ is determined precisely by the growth dynamics $f(\cdot)$ under Hypothesis \textbf{\ref{H1a}}, otherwise it is determined precisely by the initial total density $\int_\Omega u_0 (x) {\rm d}x$ (see Section \ref{sec:nogrowth} for a full discussion of the case without population growth dynamics). We then have the following expression for $k_*$:
\begin{align}\label{q:kstar}
    k_* = \frac{g_1(u_*)}{g_2(u_*)} >0,
\end{align}
where the positivity follows from Hypothesis \textbf{\ref{H2a}}. Moreover, since $g_2(u_*)>0$, there always holds $w_{k*} = - g_2(u_*) < 0$, and so the sign of the quantity $w_{u*}$ becomes critical to our analysis of system \eqref{1}. It is then informative to compute
$w_{u*} = g_1 ^\prime (u_*) - g_2 ^\prime (u_*) k_*$ and substitute \eqref{q:kstar} for $k_*$ to find the following relation for $w_{u*}$:
\begin{align}\label{q:wustar}
w_{u*} = g_1 (u_*) \left( \frac{g_1 ^\prime(u_*)}{g_1(u_*)} - \frac{g_2^\prime(u_*)}{g_2(u_*)}  \right).
\end{align}
Therefore, the sign of $w_{u*}$ depends precisely on the difference between the \textit{relative rates of change} of the encoding processes $g_1$, $g_2$ near the constant state $u_*$.

To study the local stability of the constant state $(u_*,k_*,v_*)$ with respect to \eqref{1.18} for different $\alpha$, we first define the following quantity for a given wavenumber $n \in \mathbb{N}$:
\begin{equation}\label{1.13}
    \alpha_n(R) := \dfrac{(1+n^2 R^2)(d n^2 - f_{u*}) w_{k*}}{u_* w_{u*} n^2}.
\end{equation}
Notice that $\alpha_n(R)$ are either all positive or all negative depending on the sign of $w_{u*}$, which is drastically different from the situation for the top-hat detection function considered in \cite{liu2023biological}.
Depending on the sign of $w_{u*}$, we subsequently define the following critical parameters for any given $R>0$ fixed:
\begin{align}\label{alpha*}
    \begin{cases}
         \alpha_* (R):=\max\limits_{n\in \mathbb{N}}\alpha_n (R) <0\,  \text{ whenever }\, w_{u*}>0\,;\\
         \alpha^*(R):=\min\limits_{n\in \mathbb{N}}\alpha_n (R)
        >0\,  \text{ whenever }\, w_{u*}<0.
    \end{cases}
\end{align}
We then refer to the smallest value of $n$ at which the max or min is achieved the \textit{critical wavenumber}. The definition of \eqref{alpha*} is motivated by Lemma \ref{lem:1}, where concavity of $n^2 \mapsto \alpha_{n}(R)$ for fixed $R$ is obtained depending on the sign of $w_{u*}$. We first state the following local stability result.

\begin{theorem}[Local stability of constant states]\label{thm:1.2}
Fix $\alpha \in \mathbb{R} \setminus \{0\}$. Let $(u_*,k_*,v_*)$ be the unique positive constant steady state of \eqref{1.18}, $w_{u*}$ be as defined in \eqref{q:wustar} and $\alpha_n$ be as defined in \eqref{1.13}. The following hold.
	\begin{enumerate}
	    \item[(\romannumeral1)] $\lambda = 0$ is an eigenvalue of the characteristic equation of system \eqref{1.18} if and only if $\alpha=\alpha_n$.
	    \item[(\romannumeral2)] There are no purely imaginary eigenvalues of the characteristic equation of system \eqref{1.18} for any $\alpha\in {\mathbb R}$.
     \item[(\romannumeral3)] Suppose $w_{u*}=0$. Then $(u_*, k_*, v_*)$ is locally asymptotically stable with respect to \eqref{1.18}.
	    \item[(\romannumeral4)] Suppose $w_{u*}>0$ ($w_{u*}<0$). The following dichotomy is observed.
     \begin{enumerate}
         \item Any eigenvalue $\lambda$ of the characteristic equation of system \eqref{1.18} satisfies $\Real{(\lambda)} < 0$ whenever $\alpha>\alpha_*$ ($\alpha<\alpha^*$), and $(u_*, k_*, v_*)$ is locally asymptotically stable with respect to \eqref{1.18};
         \item There is at least one eigenvalue $\lambda$ of the characteristic equation of system \eqref{1.18} satisfying $\Real{(\lambda)} > 0$ whenever $\alpha<\alpha_*$ ($\alpha>\alpha^*$), and $(u_*, k_*, v_*)$ is unstable with respect to \eqref{1.18};
     \end{enumerate}
	\end{enumerate}
\end{theorem}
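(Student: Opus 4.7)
The plan is to exploit Proposition \ref{prop:equivalentsystem} so that the entire stability analysis is carried out at the level of the local Neumann problem \eqref{1.18}, where standard eigenfunction expansions are available. Since the third equation in \eqref{1.18} is elliptic, the constant steady state satisfies $v_* = k_*$, and the linearization at $(u_*,k_*,v_*)$ for the perturbation $(\tilde u,\tilde k,\tilde v)$ takes the form
\begin{equation*}
\tilde u_t = d\tilde u_{xx} + \alpha u_* \tilde v_{xx} + f_{u*}\tilde u,\qquad
\tilde k_t = w_{u*}\tilde u + w_{k*}\tilde k,\qquad
\tilde v_{xx} - R^{-2}(\tilde v - \tilde k) = 0,
\end{equation*}
with homogeneous Neumann data. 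Expanding in the orthonormal Neumann eigenbasis $\{\cos(nx)\}_{n\geq 0}$ of $(0,\pi)$, the elliptic relation can be solved explicitly for each Fourier coefficient, giving $\tilde v_n = \tilde k_n/(1+n^2R^2)$. Substituting back reduces the spectral problem, for each fixed $n\in\mathbb N_0$, to the $2\times2$ matrix
\begin{equation*}
J_n = \begin{pmatrix} -dn^2 + f_{u*} & -\dfrac{\alpha u_* n^2}{1+n^2R^2} \\[4pt] w_{u*} & w_{k*} \end{pmatrix}.
\end{equation*}

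Next I would compute $\mathrm{tr}(J_n) = -dn^2 + f_{u*} + w_{k*}$ and $\det(J_n) = (-dn^2 + f_{u*})w_{k*} + \alpha u_* n^2 w_{u*}/(1+n^2R^2)$. Since $f_{u*}<0$ by Hypothesis \ref{H1a} and $w_{k*}=-g_2(u_*)<0$ by Hypothesis \ref{H2a}, the trace is strictly negative for every $n\geq 0$. Part (ii) follows immediately: a purely imaginary pair would force $\mathrm{tr}(J_n)=0$, contradicting the above. Part (i) follows by setting $\det(J_n)=0$ and rearranging algebraically into exactly the formula \eqref{1.13}, so $\lambda=0$ is attained precisely on the discrete set $\{\alpha = \alpha_n : n\in\mathbb N\}$ (the case $n=0$ yields $\det(J_0)=f_{u*}w_{k*}>0$ and never contributes).

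For parts (iii)--(iv) I would analyze the sign of $\det(J_n)$. When $w_{u*}=0$ the determinant reduces to $(-dn^2+f_{u*})w_{k*}>0$ for all $n$, and combined with the negative trace gives Routh--Hurwitz stability at every mode, yielding (iii). When $w_{u*}\neq 0$, I rearrange $\det(J_n)>0$ into an explicit bound on $\alpha$: dividing by $u_* n^2 w_{u*}$ (and tracking whether the sign is flipped), the inequality becomes $\alpha > \alpha_n$ if $w_{u*}>0$ and $\alpha < \alpha_n$ if $w_{u*}<0$. Requiring this uniformly in $n\geq 1$ and appealing to the concavity/monotonicity statement of Lemma \ref{lem:1} (which guarantees that the extremum in \eqref{alpha*} is attained at a finite wavenumber and that $\alpha_*,\alpha^*$ are well defined) produces the threshold characterizations in (iv). Conversely, if $\alpha$ violates the relevant threshold, the critical wavenumber from \eqref{alpha*} furnishes a mode with $\det(J_n)<0$, hence a positive real eigenvalue, proving instability.

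The main obstacle is the passage from pointwise (mode-by-mode) spectral information to genuine linear asymptotic stability of the coupled parabolic-ODE-elliptic system. The delicate point is that \eqref{1.18} is not a purely parabolic system: $V$ is slaved algebraically to $K$ and $K$ evolves by a pointwise ODE, so one must verify that the linearized operator, viewed on the natural Sobolev spaces of even functions with Neumann traces, generates an analytic semigroup whose spectrum coincides exactly with the union over $n$ of the eigenvalues of $J_n$. Once $V$ has been eliminated via the bounded inverse of $\bigl(I - R^2\partial_{xx}\bigr)$ with Neumann data (which has compact resolvent), the resulting operator acting on $(\tilde u, \tilde k)$ is a lower-order perturbation of $d\partial_{xx}\oplus 0$ plus a bounded coupling, so standard sectorial perturbation theory applies; combined with the spectral gap $\sup_n \Real(\lambda) < 0$ obtained above in the stable regime, this yields exponential decay and hence local asymptotic stability.
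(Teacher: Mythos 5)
Your proposal takes essentially the same route as the paper: linearize \eqref{1.18} at the constant state, expand in the Neumann Fourier modes, eliminate $\tilde v$ through the elliptic equation to reduce to a $2\times 2$ matrix whose trace and determinant are exactly the paper's $T(n^2)$ and $D(\alpha,n^2)$, and then read off (i)--(iv) from the signs of those quantities together with the concavity/finiteness statement in Lemma \ref{lem:1}. The additional remarks about sectorial generation and passing from modewise eigenvalues to genuine linear stability are sound and slightly more careful than the paper's terse citation of its Lemma 6.2, but they do not change the argument.
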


\noindent Below is a figure depicting the region of local stability in the $(R,\alpha)$-plane as presented in Theorem \ref{thm:1.2}.

\begin{figure}[htbp]
	\centering	
 
   \adjustbox{center}{%
    \includegraphics[width=0.75\textwidth]{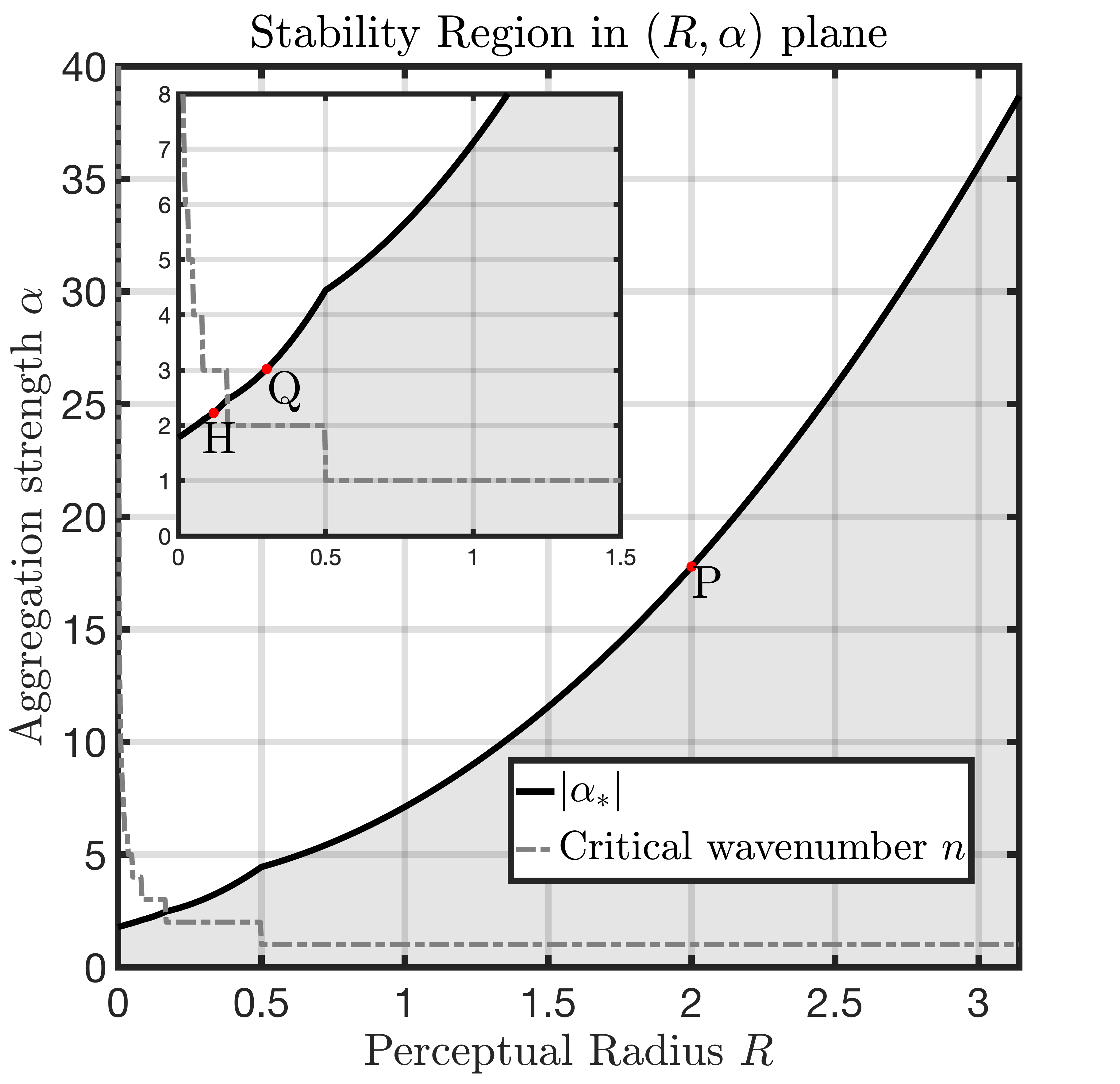}
  }
	\caption{A graphical depiction of the region of (local) stability of the homogeneous state in the $(R,\alpha)$-plane for system \eqref{1.1} with growth dynamics. The shaded region corresponds to local stability, while the white regions correspond to linear instability. The black dividing line is given by Equation \eqref{1.13}. The dashed line corresponds to the wavenumber at which destabilization occurs. The specific points $P=(2,17.7895)$, $Q=(0.3,3.0242)$, $H=(0.12,2.2328)$ are used in \textbf{Example} \ref{ex1} of Section \ref{1.13}; these are also the values chosen for Figures \ref{fig:3}, \ref{fig:4} and \ref{fig:2}.
 }
	\label{fig:1}
\end{figure}

We can then describe the quantitative properties of the non-constant steady state that appears near the critical bifurcation point.

\begin{theorem}[Description of bifurcations I]\label{thm:bif2}
Assume hypotheses \textbf{\textup{\ref{H1a}-\ref{H2b}}} hold and let $\alpha_n$ be as given in \eqref{1.13}. Then $\alpha=\alpha_n$ is a steady-state bifurcation point of system \eqref{1.18}. Moreover, near $(\alpha,u,k,v)=(\alpha_n,u_*,k_*,v_*)$, the bifurcating steady-state solutions from the line of constant solutions $\{(\alpha,u_*,k_*,v_*):\alpha\in\mathbb{R}\}$ lie on a smooth curve
\begin{equation}\label{1.22}
    \Gamma_n=\{(\alpha_n(s),u_n(s,\cdot),k_n(s,\cdot),v_n(s,\cdot)): -\delta<s<\delta\},
\end{equation}
with 
\begin{equation}
\begin{aligned}
    \alpha_n(s) = \alpha_n + \alpha_n'(0) s + z_{0,n}(s) s^2,\\
    \end{aligned}
\end{equation}
and 
\begin{equation}\label{sol}
\begin{aligned}
    \left(\begin{array}{c}
u_n(s,\cdot)
\\ 
k_n(s,\cdot)\\
v_n(s,\cdot)
\end{array}\right) = \left(\begin{array}{c}
u_*
\\ 
k_*\\
v_*
\end{array}\right) + s \left(\begin{array}{c}
1
\\ 
M_1\\
M_{2,n}
\end{array}\right) \cos(nx) + s^2 \left(\begin{array}{c}
z_{1,n}(s,\cdot)
\\ 
z_{2,n}(s,\cdot)\\
z_{3,n}(s,\cdot)
\end{array}\right)
\end{aligned}
\end{equation}
where $\delta>0$ is a constant, $\alpha_n'(0)=0$,
\begin{equation}\label{1.24}
    M_1=-\dfrac{w_{u*}}{w_{k*}},\ M_{2,n}=-\dfrac{w_{u*}}{w_{k*}}\left(\dfrac{1}{1+n^2 R^2}\right)=\left(\dfrac{1}{1+n^2 R^2}\right)M_1 ,
\end{equation}
and $z_{i,n}$ are smooth functions $z_{0,n}: (-\delta,\delta) \rightarrow \mathbb{R}$ and $z_{1,n}, z_{2,n}, z_{3,n}: (-\delta,\delta) \rightarrow X$ satisfying $z_{0,n}(0)=0$, $z_{i,n}(0,\cdot)=0$ $(i=1,2,3)$.
\end{theorem}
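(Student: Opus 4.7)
My plan is to apply the Crandall--Rabinowitz local bifurcation theorem to the steady-state equations associated with the local Neumann system \eqref{1.18}, which by Proposition \ref{prop:equivalentsystem} captures bifurcation for the original nonlocal problem \eqref{1}. I introduce the Banach spaces $X = \{(U,K,V) \in (C^{2,\eta}([0,\pi]))^3 : U_x(0)=U_x(\pi)=V_x(0)=V_x(\pi)=0\}$ and $Y = (C^{0,\eta}([0,\pi]))^3$ decomposed in the Neumann cosine basis $\{\cos(nx)\}_{n\ge 0}$, and define $F : \mathbb{R} \times X \to Y$ to be the stationary residual of \eqref{1.18}. The trivial branch $F(\alpha, u_*, k_*, v_*) \equiv 0$ is immediate from Hypotheses \textbf{\ref{H1a}} and \textbf{\ref{H2a}}.

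Next, I compute the Fr\'echet derivative $L := F_{(U,K,V)}(\alpha, u_*, k_*, v_*)$ acting on a perturbation $(\tilde U, \tilde K, \tilde V)$. Because all coefficients are constant, $L$ decouples on each cosine mode: mode $n$ reduces to a $3 \times 3$ linear system in the Fourier coefficients $(a_n, b_n, c_n)$ with matrix
$$A_n(\alpha) = \begin{pmatrix} -dn^2 + f_{u*} & 0 & -\alpha u_* n^2 \\ w_{u*} & w_{k*} & 0 \\ 0 & R^{-2} & -n^2 - R^{-2} \end{pmatrix}.$$
Direct expansion shows that $\det A_n(\alpha)$ vanishes precisely when $\alpha = \alpha_n$ given by \eqref{1.13}. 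Solving the mode-$n$ system at $\alpha = \alpha_n$ with $a_n = 1$ yields $b_n = -w_{u*}/w_{k*} = M_1$ from the algebraic second equation and $c_n = M_1/(1 + n^2 R^2) = M_{2,n}$ from the elliptic third equation, recovering \eqref{1.24}. Hence $\ker(L) = \mathrm{span}\{\phi_n\}$ with $\phi_n = (\cos(nx), M_1\cos(nx), M_{2,n}\cos(nx))$. Fredholmness of $L$ with index zero follows by eliminating $\tilde K$ via the algebraic second equation and applying classical elliptic regularity to the resulting coupled system for $(\tilde U, \tilde V)$.

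For the transversality condition I compute $F_{\alpha U}(\alpha_n, u_*, k_*, v_*)[\phi_n] = (-n^2 u_* M_{2,n} \cos(nx),\, 0,\, 0)$. Solving the adjoint system $\ell^T A_n(\alpha_n) = 0$ shows that the cokernel direction has a nonzero first entry (one may normalize $\ell_1 = 1$), so the pairing $\langle \ell\cos(nx),\, F_{\alpha U}[\phi_n]\rangle = -n^2 u_* M_{2,n} \int_0^\pi \cos^2(nx)\,\dx \neq 0$, and $F_{\alpha U}[\phi_n] \notin \mathrm{Range}(L)$. Crandall--Rabinowitz then delivers the smooth curve \eqref{1.22} and the expansion \eqref{sol} via Lyapunov--Schmidt reduction. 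To show $\alpha_n'(0) = 0$, I invoke the standard bifurcation formula $\alpha_n'(0) = -\langle \ell\cos(nx), F_{UU}[\phi_n,\phi_n]\rangle/(2 \langle \ell\cos(nx), F_{\alpha U}[\phi_n]\rangle)$. Every quadratic contribution to $F_{UU}[\phi_n,\phi_n]$ --- the aggregation term $\alpha_n(\tilde U \tilde V_x)_x$, the reaction $f''(u_*)\tilde U^2$, and the nonlinearities in $w(U,K)$ --- produces only products of the form $\cos^2(nx) = \tfrac{1}{2}(1 + \cos(2nx))$ or $\sin(nx)\cos(nx) = \tfrac{1}{2}\sin(2nx)$ in its derivative form, all of which live in the span of modes $\{0, 2n\}$. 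Orthogonality of $\cos(nx)$ with $1$ and $\cos(2nx)$ on $(0,\pi)$ then forces the numerator to vanish, giving $\alpha_n'(0) = 0$.

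The hardest part will be ensuring simplicity of $\ker(L)$ in full generality. Since $n \mapsto \alpha_n$ is not monotonic (Lemma \ref{lem:1} establishes concavity of $n^2 \mapsto \alpha_n$ with a unique extremum), coincidences $\alpha_n = \alpha_m$ for distinct $n, m$ may occur for non-generic parameter values and would enlarge the kernel. I expect this to be resolved either by restricting to the invariant mode-$n$ subspace (where simplicity is automatic and bifurcation in each Fourier class may be treated separately) or by imposing a mild non-resonance hypothesis on the parameters. Once simplicity is secured, the Fredholm property, transversality, and parity computation above are routine and give \eqref{sol} together with $\alpha_n'(0) = 0$.
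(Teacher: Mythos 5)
Your strategy --- Crandall--Rabinowitz applied to the steady-state operator of \eqref{1.18}, a cosine-mode decomposition of the linearization into a family of $3\times3$ matrices $A_n(\alpha)$, computation of the kernel and cokernel directions, transversality via the adjoint pairing, and $\alpha_n'(0)=0$ by orthogonality of modes $0$ and $2n$ against mode $n$ --- is the same as the paper's.

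There is, however, a genuine functional-analytic gap: placing the $K$-component in $C^{2,\eta}$ while the codomain sits in $(C^{0,\eta})^3$ breaks the Fredholm property. The second row of $L$ is the \emph{zeroth-order} map $\tilde K \mapsto w_{u*}\tilde U + w_{k*}\tilde K$, so its image is confined to $C^{2,\eta}$, which has infinite codimension in $C^{0,\eta}$; the cokernel is therefore infinite-dimensional and $L$ cannot have index $0$. Your ``eliminate $\tilde K$ algebraically'' step runs into exactly this: given $h_2\in C^{0,\eta}$, the solution $\tilde K=(h_2-w_{u*}\tilde U)/w_{k*}$ is only $C^{0,\eta}$, not $C^{2,\eta}$, so it leaves your $X$. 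The paper sidesteps this by taking $F:\mathbb{R}\times X\times Y\times X\to Y^3$ with $X=\{u\in H^2(0,\pi):u'(0)=u'(\pi)=0\}$ and $Y=L^2(0,\pi)$, so that the $k$-slot of the domain coincides with the target space. The repair for your version is simply to put the $K$-component of the source in $C^{0,\eta}$ (matching the target); with that change, your kernel, cokernel, transversality and parity computations go through and reproduce \eqref{1.24} and $\alpha_n'(0)=0$ exactly as in the paper's Appendix. Your closing concern about simplicity of $\ker(L)$ when $\alpha_n=\alpha_m$ for $m\neq n$ is well posed; note, though, that the paper's proof simply asserts one-dimensionality of the kernel without comment, so this non-resonance condition is an implicit hypothesis there as well, not a defect unique to your write-up.
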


From Theorem \ref{thm:bif2}, we conclude that the bifurcation at $\alpha=\alpha_n$ is not a transcritical one but a pitchfork one since we find that $\alpha_n'(0)=0$. In Figures \ref{fig:3}-\ref{fig:4}, we display a bifurcation diagram depicting the result of Theorem \ref{thm:bif2} for two different values of $R$. We observe the pitchfork behavior in the first case ($R=0.3$). Moreover, we observe that the emergent stationary state has two peaks. Given Figure \ref{fig:1}, this is what we expect, where the point $Q$ predicts that the emergent state should appear with frequency $2$. Similarly, in the second case ($R=2.0$), we observe in Figure \ref{fig:4} that the emergent branch has a single peak, consistent with the prediction for point $P$ in Figure \ref{fig:1}. An essential difference, however, is the stability of the emergent branches.

Therefore, by the perturbation results in \cite{Crandall1973}, we obtain the following stability theorem, which determines whether the pitchfork bifurcation is forward or backward and the stability of the emergent branch, which fully resolves the differences observed between Figures \ref{fig:3} and \ref{fig:4}.

\begin{theorem}[Description of bifurcations II]\label{thm:bif3}
Let $(\alpha_n(s),u_n(s,\cdot),k_n(s,\cdot),v_n(s,\cdot))$ ($|s|<\delta$) be the bifurcating non-constant steady state solutions from the constant ones at $\alpha=\alpha_n$. Let $\alpha_n''(0)$ be as given in \eqref{alpha'}. Then we have the following results:
	\begin{enumerate}
	    \item[(\romannumeral1)] if $w_{u*}>0$, suppose that $\alpha_*=\alpha_M$ for some $M\in {\mathbb N}$, then the pitchfork bifurcation at $\alpha=\alpha_M$ is backward and the bifurcating solutions are locally asymptotically stable with respect to \eqref{1.18} if $\alpha_M''(0)<0$, and it is forward and the bifurcating solutions are unstable if   $\alpha_M''(0)>0$;
	    \item[(\romannumeral2)] if $w_{u*}<0$, suppose that $\alpha_*=\alpha_N$ for some $N\in {\mathbb N}$, then the pitchfork bifurcation at $\alpha=\alpha_N$ is forward and the bifurcating solutions are locally asymptotically stable with respect to \eqref{1.18} if $\alpha_N''(0)>0$, and it is backward and the bifurcating solutions are unstable if   $\alpha_N''(0)<0$.
	\end{enumerate}    
\end{theorem}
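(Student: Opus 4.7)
The plan is to apply the exchange of stability principle of Crandall and Rabinowitz \cite{Crandall1973} to the smooth branch $\Gamma_n$ of steady states produced by Theorem \ref{thm:bif2}. Since that theorem already certifies $\alpha=\alpha_n$ as a simple Crandall-Rabinowitz bifurcation point, the perturbation result of \cite{Crandall1973} furnishes a $C^1$ curve $\alpha\mapsto\lambda(\alpha)$ of eigenvalues of the linearization of \eqref{1.18} at the constant state with $\lambda(\alpha_n)=0$; a $C^1$ curve $s\mapsto\mu(s)$ of eigenvalues of the linearization at the bifurcating state $(u_n(s,\cdot),k_n(s,\cdot),v_n(s,\cdot))$ with $\mu(0)=0$; and the asymptotic relation
\begin{equation*}
    \mu(s) = -s\,\alpha_n'(s)\,\lambda'(\alpha_n) + o\bigl(s\alpha_n'(s)\bigr) \quad \text{as } s \to 0.
\end{equation*}
Because $\alpha_n'(0)=0$ by Theorem \ref{thm:bif2} and $\alpha_n'(s)=\alpha_n''(0)\,s+O(s^2)$, this reduces at leading order to
\begin{equation*}
    \mu(s) = -\alpha_n''(0)\,\lambda'(\alpha_n)\,s^2 + O(s^3).
\end{equation*}

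The next step is to pin down the sign of $\lambda'(\alpha_n)$ at the critical wavenumber. In case (i), $w_{u*}>0$ and the critical value is the \emph{maximum} of $\{\alpha_n\}$, so Theorem \ref{thm:1.2}(iv) gives that the constant state is stable for $\alpha>\alpha_M$ and unstable for $\alpha<\alpha_M$; hence the crossing eigenvalue $\lambda(\alpha)$ passes from the open left half-plane to the open right half-plane as $\alpha$ decreases through $\alpha_M$, forcing $\lambda'(\alpha_M)<0$. Plugging this into the expansion for $\mu(s)$ yields $\sign\mu(s)=\sign\alpha_M''(0)$ for $s\neq 0$ small. Since $\alpha_M(s)-\alpha_M$ has the same sign as $\alpha_M''(0)$, the case $\alpha_M''(0)<0$ means the branch bends into $\alpha<\alpha_M$ (backward) with $\mu(s)<0$ (stable), whereas $\alpha_M''(0)>0$ means it bends into $\alpha>\alpha_M$ (forward) with $\mu(s)>0$ (unstable), giving (i). Case (ii) is symmetric: the critical value is now the \emph{minimum} of the positive family, so an identical sign analysis gives $\lambda'(\alpha_N)>0$, and the expansion yields $\sign\mu(s)=-\sign\alpha_N''(0)$, producing (ii).

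The principal technical burden lies in two places. First, one must confirm that the simple eigenvalue identified in Theorem \ref{thm:bif2} is the \emph{only} eigenvalue of the linearization at the bifurcating steady state whose real part can fail to be strictly negative for small $|s|$. This follows from spectral continuity together with Lemma \ref{lem:1}: the concavity of $n^2\mapsto\alpha_n(R)$ guarantees a uniform spectral gap at $s=0$ separating the crossing eigenvalue from the rest of the spectrum of the linearization at $(u_*,k_*,v_*)$, and this gap persists under the $C^1$ perturbation in $s$. Second, applying the exchange of stability in the pitchfork setting reduces the whole question to computing the sign of $\alpha_n''(0)$ via the expansion \eqref{alpha'}; this is a Lyapunov-Schmidt computation projected onto the one-dimensional kernel spanned by $(1,M_1,M_{2,n})\cos(nx)$ from Theorem \ref{thm:bif2}, and is the main obstacle since the resulting expression involves $f''(u_*)$, $g_i''(u_*)$, and the critical wavenumber in a nontrivial way.
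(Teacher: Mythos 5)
Your proposal is correct and follows essentially the same route as the paper: both invoke the Crandall--Rabinowitz exchange-of-stability result from \cite{Crandall1973} to obtain a crossing eigenvalue $\mu(s)$ along the bifurcating branch satisfying $\mathrm{sign}\,\mu(s)=\mathrm{sign}\bigl(-s\,\alpha_n'(s)\,r'(\alpha_n)\bigr)$, identify the sign of $r'(\alpha_n)$ from the characteristic equation (equivalently, from the monotone loss of stability across $\alpha_*$), and then use $\alpha_n'(0)=0$ together with the sign of $\alpha_n''(0)$ and the extremality of $\alpha_*$ over all wavenumbers to finish the sign analysis and the statement about the remaining spectrum. The only cosmetic difference is that you phrase the conclusion through the quadratic expansion $\mu(s)\approx -\alpha_n''(0)\,r'(\alpha_n)\,s^2$, whereas the paper argues directly on the sign of $\alpha_n'(s)$ for $s\gtrless 0$; these are the same computation.
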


Based on Theorem \ref{thm:bif3}, $w_{u*}$, and $\alpha^{\prime \prime}(0)$, we can verify that in Figure \ref{fig:3}, the bifurcation is a supercritical one, and the emergent branch is (locally) stable. On the other hand, we can also verify that in Figure \ref{fig:4}, the bifurcation is subcritical, and the emergent branch is unstable. This is precisely what is found in Figure \ref{fig:4}, where the homogeneous state is locally stable near the critical threshold (yellow dots), but given a large enough perturbation, a stable high-amplitude state above the unstable branch is found (blue stars \& red pluses).

The following section discusses several biological insights emerging from our analysis.

\begin{figure}[htbp]
    \centering
    \includegraphics[trim={4cm 4cm 5cm 4cm},clip,width=0.75\linewidth]{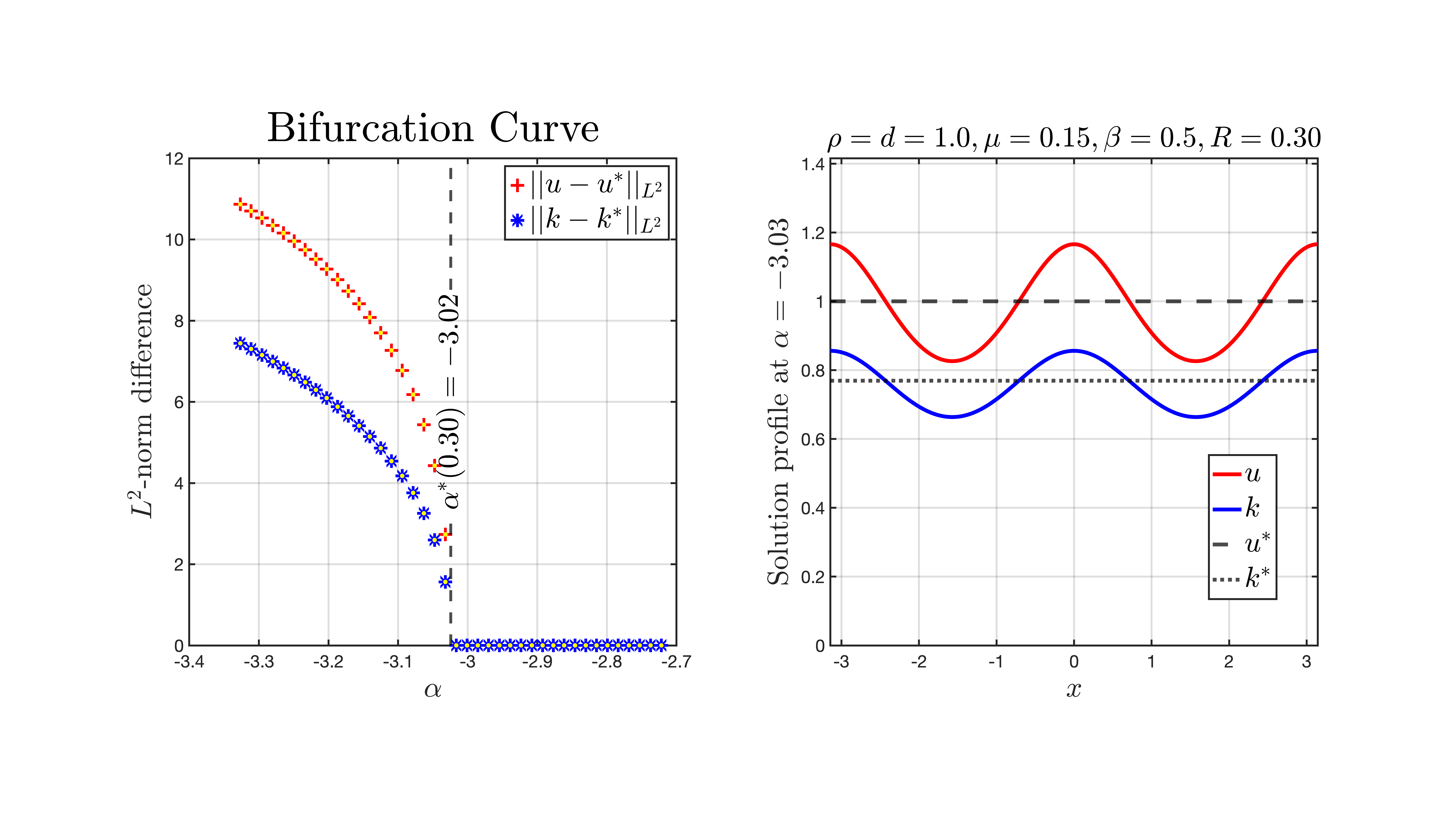}
    \caption{A bifurcation diagram near the critical threshold $\alpha^*$ when $R=0.3$ for \textbf{Example} \ref{ex1} of Section \ref{sec:apps} (left panel) and the solution profile at steady state just beyond the critical threshold (right panel).}
    \label{fig:3}
\end{figure}

\section{Interpretation of main results}\label{sec:interpretationofmainresults}

Beyond the technical insights gleaned from our detailed bifurcation analysis, we found several interesting consequences of these mathematical results that correspond directly to biological interpretation. We present them here and refer to Section \ref{sec:apps} for further technical details.

\subsection{Attractive vs. Repulsive Spatial Maps}

A defining feature of our model is that the sign of the derivative balance for the spatial map given by $w_{u*}$, defined in \eqref{q:wustar}, determines whether the map $k$ behaves as an effectively ``attractive" or ``repulsive" spatial map. For example, attractive maps may correspond to site fidelity \cite{Weinrich1998} or reinforcement learning \cite{Lewis2021}, while repulsive maps may correspond to avoiding areas too recently visited (``time since last visit") \cite{Schlagel2017} or overexploited sites. 

Concretely, recall from \eqref{1.13} that, for all inputs fixed, there is only one destabilization direction: pattern formation occurs for negative values of $\alpha$ \textit{xor} (in the exclusive sense) for positive values of $\alpha$. By \eqref{q:wustar}, the direction of destabilisation then depends on the relative magnitudes of $g^\prime (u^*)/g_1(u^*)$ and $g_2^\prime(u^*) / g_2(u^*)$, which are precisely the relative rates of change of the excitation and adaptation rates for the dynamics of the map $k$ at the homogeneous state $u^*$, respectively. When the relative excitation rate $g^\prime(u^*)/g_1(u^*)$ is the larger quantity, the map effectively acts as an \emph{attractive} potential: the population density $u$ is in phase with the peaks of the spatial map $k$, and the patterned state only emerges at sufficiently negative aggregation strengths $\alpha \ll 0$. Conversely, when the relative adaptation rate $g_2^\prime(u^*) / g_2(u^*)$ is the larger quantity, the map exerts a \emph{repulsive} effect: population peaks of $u$ become out of phase with the spatial map $k$, and patterned states occur only at sufficiently positive values $\alpha \gg 0$. In Figures \ref{fig:3}, \ref{fig:4}, and \ref{fig:2} (see \textbf{Example} \ref{ex1} of Section \ref{sec:examples}), we observe a patterned population profile in phase with the spatial map $k$. Conversely, in Figures \ref{fig:5}-\ref{fig:7} (see \textbf{Example} \ref{ex2} of Section \ref{sec:examples}), we now observe a patterned profile that is out of phase with the spatial map.

This mechanism parallels the classical scalar aggregation-diffusion equation on the torus \cite{Carrillo2020LongTime}, where the kernel itself is strictly attractive ($-G(x)$) or strictly repulsive ($+G(x)$). In our case, the kernel $G$ is \emph{fixed}, and it is \emph{the dynamics of the map ODE}—specifically, which of $g_1$ or $g_2$ dominates near $u^*$—that tips the system between attracting versus repelling regimes. This can be seen directly from Theorem \ref{thm:bif2}, where the phase relationship between $u$ and $k$ is determined by the sign of the coefficient $M_1$ (defined in \eqref{1.24}), whose sign is determined by $w_{u*}$. This highlights how excitatory and adaptive memory processes can reverse the population’s movement response toward desirable areas or away from undesirable areas, causing strikingly different spatial distributions depending on the relative excitatory versus inhibitory encoding strengths in the brain or sensory system. 

\begin{figure}[htbp]
    \centering
    \includegraphics[trim={4cm 4cm 5cm 4cm},clip,width=0.75\linewidth]{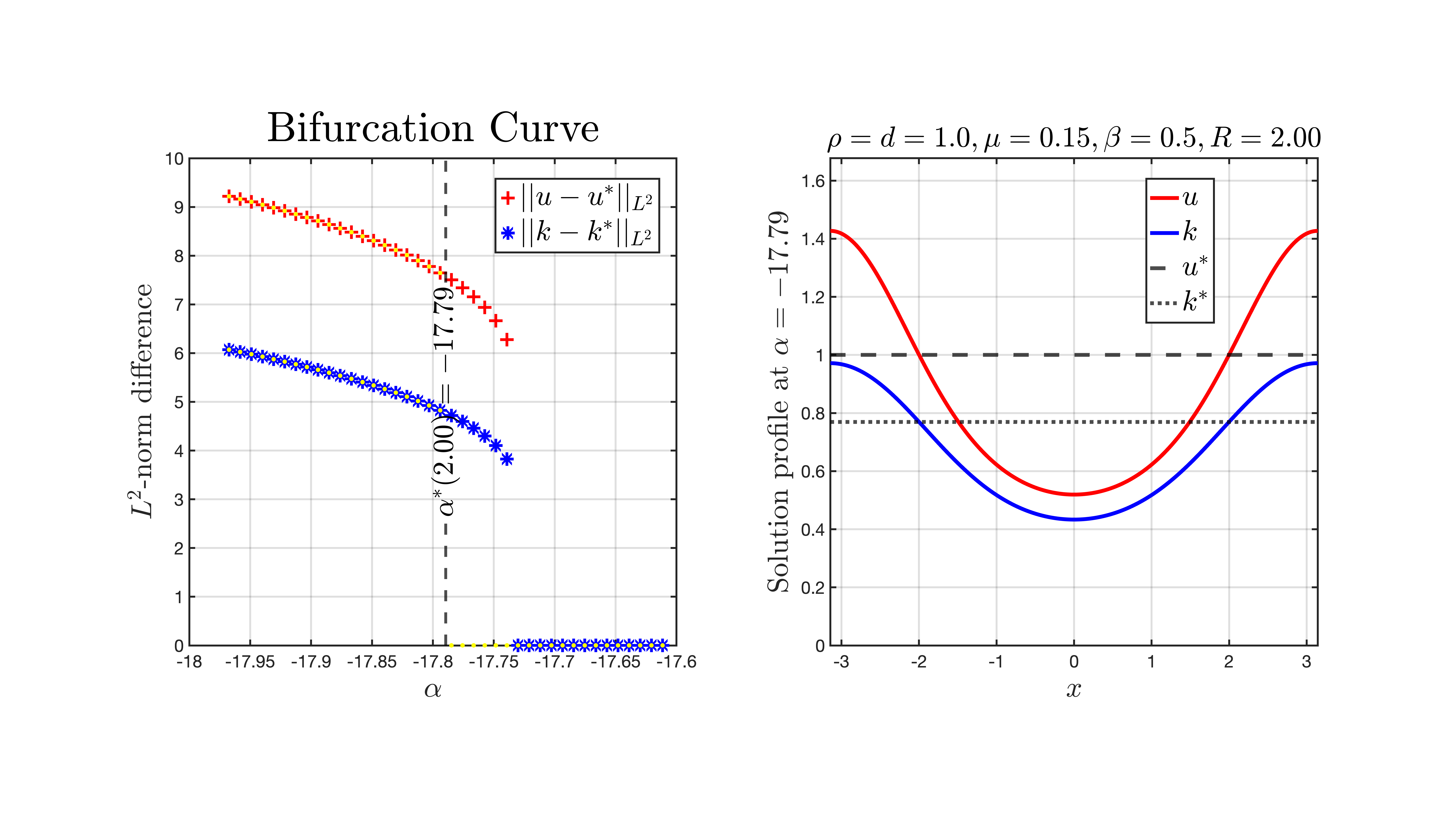}
    \caption{A bifurcation diagram near the critical threshold $\alpha^*$ when $R=2.0$ for \textbf{Example} \ref{ex1} of Section \ref{sec:apps} (left panel) and the solution profile at steady state just beyond the critical threshold (right panel).}
    \label{fig:4}
\end{figure}

\subsection{Competition Induces Higher Frequency Patterns} When growth dynamics are excluded (i.e., $f(u)\equiv 0$), we observe from \eqref{1.13} with $f_{u*}=0$ (see also \eqref{1.13'}) that the emergent patterned state will always occur at $n=1$, which is to say, the patterned state will always feature one single aggregate. In fact, in this case the critical aggregation strength $\alpha^* (R)$ depends quadratically on the perceptual radius $R$ given precisely as
$$
\alpha^*(R) = \frac{d w_{k*}}{u^* w_{u*}} (R^2 + 1).
$$
This is much different than the curve displayed in Figure \ref{fig:1}, where the frequency of the emergent patterned state increases as the perceptual radius $R$ decreases. Moreover, this increase in frequency holds regardless of whether the map is attractive or repulsive. Thus, more localised (i.e., stronger) aggregation drives the system toward higher-frequency modes. This is observed, for example, in Figures \ref{fig:3}, \ref{fig:4}, and \ref{fig:2}, where the number of peaks of the population density increases from one (Figure \ref{fig:4}) to two (Figure \ref{fig:3}) to three (Figure \ref{fig:2}) peaks as the radius $R$ decreases. From a biological standpoint, this can be understood intuitively: in the presence of density-limiting competition, individuals that cluster too tightly suffer reduced per-capita growth, pushing the system to form smaller, more numerous clumps rather than a single large aggregation. Consequently, one obtains a richer variety of spatial structures when a dynamic spatial map and local competition occur simultaneously. This underscores that competition is necessary to generate multi-peaked configurations in this framework.

\subsection{Impact of aggregation on abundance and the role of sub- versus supercritical bifurcations}

A core difference from standard aggregation-diffusion problems studied on the torus is that, here, both \emph{forward} (supercritical) and \emph{backward} (subcritical) bifurcations can arise (see Theorem \ref{thm:bif3} and Figures \ref{fig:4} and \ref{fig:5}). In classical periodic aggregation-diffusion equations, the homogeneous steady state typically loses stability through a supercritical pitchfork bifurcation; see, for example, \cite[Theorem 4.2]{Carrillo2020LongTime}. By contrast, our combined PDE-ODE system also supports subcritical pitchfork bifurcation behaviour. This is an unexpected insight on its own, but it is of more consequence when also considering changes in population abundance. We explain this as follows.

\begin{figure}
    \centering
    \includegraphics[width=\linewidth]{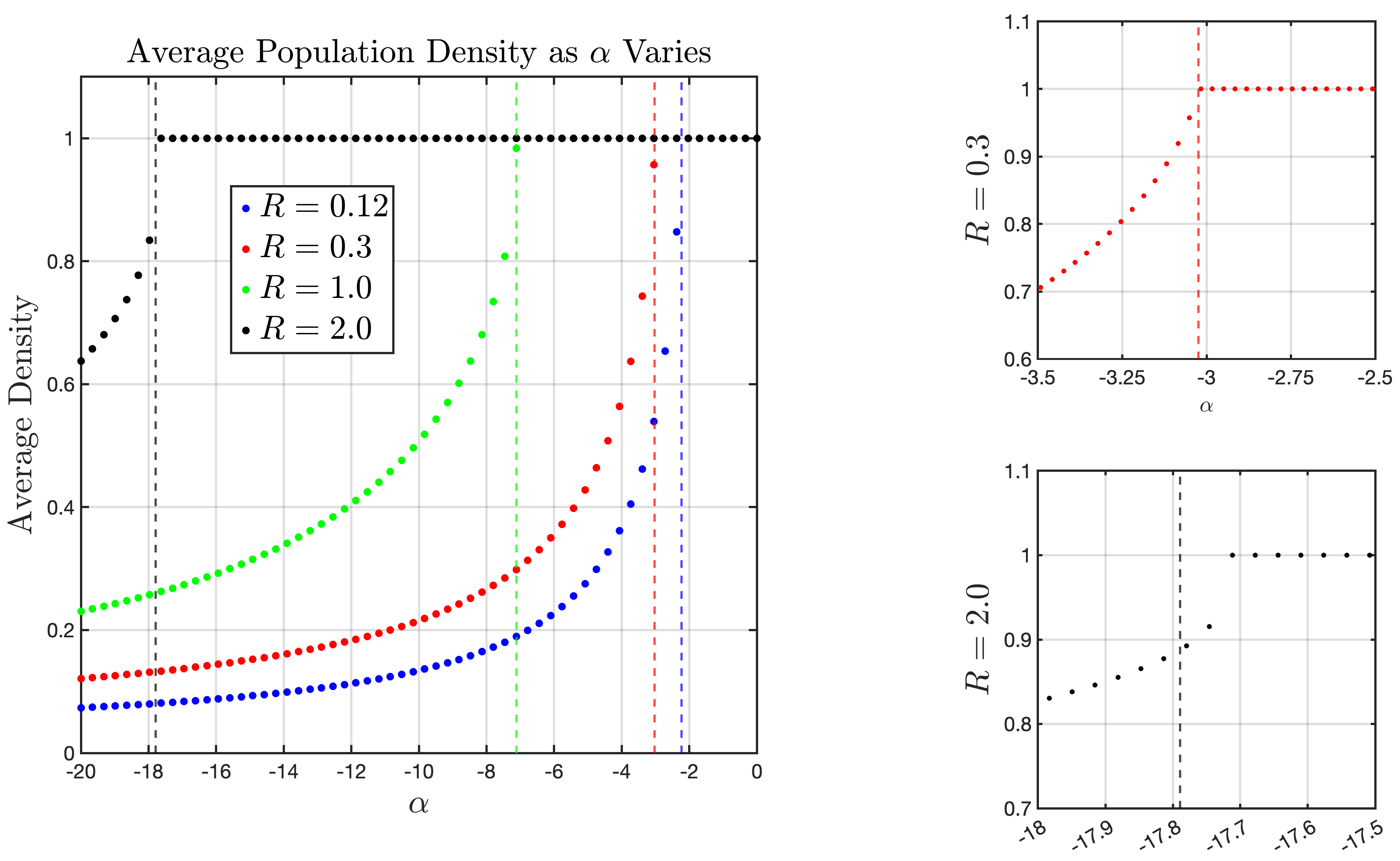}
    \caption{The average population density as a function of aggregation strength $\alpha$ for four $R$ values (colored dots). The vertical dashed lines correspond to the critical threshold $\alpha^*(R)$ obtained from a linear stability analysis (see \eqref{alpha*}). The right panels depict a zoomed-in version of the $R=0.3$ case (red dots) and the $R=2.0$ case (black dots), exemplifying the influence of subcritical versus supercritical bifurcation structure and the impact on the average population density.}
    \label{fig:totalmass}
\end{figure}

It is easy to see that the total population is conserved when $f(u) \equiv 0$, and without loss of generality, we may assume that the average density at this state is $1$. With population dynamics included, however, we have observed numerically that as the aggregation strength passes through the critical value, the abundance \textit{decreases}. This is depicted in Figure \ref{fig:totalmass}, where we display the average population density as a function of aggregation strength $\alpha$. We observe that for any perceptual radius $R>0$, the average density is non-increasing and is strictly decreasing beyond the critical threshold.

This may be understood intuitively: aggregates naturally include regions with densities above and below the density of a homogeneous state. To see this, integrate the equation for $u$ over $(-\pi,\pi)$ and apply the periodicity of solutions to find that
\begin{align}
    \int_\Omega u(1-u) \dx = 0.
\end{align}
Therefore, for any non-constant stationary state $u=u^*$, there must be regions of $\Omega$ with $u^*>1$ and other regions with $u^* < 1$. The population is then penalized for aggregating too closely, as our model does not explicitly include any benefit to forming localized aggregates (e.g., through group defense). 

When the bifurcation is \textit{supercritical}, we observe a smooth, decreasing transition from the average density of $1$ at the homogeneous state to a lower average density at a patterned state as the aggregation strength increases and the population aggregates become more localised. Referring again to Figure \ref{fig:3}, we observe that the smoothness of the transition from a homogeneous state to a patterned state translates to the smooth transition from high average density to lower average density (red dots of Figure \ref{fig:totalmass}).

However, the emergent low-amplitude state is unstable when the bifurcation is \textit{subcritical}. Consequently, we observe a discontinuous jump from the average density at the homogeneous state to a much smaller average density at the patterned state; moreover, this discontinuous transition occurs earlier than that of the supercritical case (the black dots of Figure \ref{fig:totalmass}). Referring now to Figure \ref{fig:3}, we see that this discontinuous transition in the average density corresponds directly to the discontinuity in the stability of the homogeneous state.

Summarizing, our numerical exploration suggests that once a patterned branch is triggered just past the critical aggregation strength $\alpha^*$, the average density may collapse to a value significantly below the homogeneous level, reminiscent of a tipping point phenomena in ecology \cite{Dakos2019}. This difference has implications for population management: certain parameter shifts (e.g., in $\alpha$ or the $g_1$. $g_2$ encoding rates) may abruptly plunge the population’s overall abundance into a lower state—highlighting a potential for sudden collapse or redistribution depending on the nature of the bifurcation. Crucially, this transition can be initiated by a strong disturbance (i.e., a large perturbation) of the current population density.

\section{Preliminaries}\label{sec:prelim}

Throughout the manuscript we will often write $\Omega$ to mean either $\mathbb{R}$, $(-\pi,\pi)$ or $(0,\pi)$ where the context will be made clear. We then denote $Q_T := \Omega \times (0,T)$ for $T>0$ fixed. For $1 \leq p \leq \infty$, $L^p (\Omega)$ denotes the usual Lebesgue space of $p$-integrable functions over the spatial domain $\Omega$; when $p=\infty$, this is the space of essentially bounded functions with $\norm{u}_{L^\infty(\Omega)}$ the essential supremum of $u$ over $\Omega$. Given a Banach space $X$ and $1 \leq q \leq\infty$, we denote by $L^q(0,T; X)$ the space of functions $u(\cdot,t) \in X$ with finite $L^q$-norm in time, i.e., $\norm{u}_{L^q (0,T; X)}^q = \int_0 ^T \norm{u(\cdot,t)}_X {\rm d}t < \infty$. In particular, for $1 \leq p,q < \infty$, we denote by $L^q ( 0,T; L^p (\Omega) )$ the Lebesgue space over $Q_T$ with norm
\begin{align}
    \norm{v}_{L^q (0,T; L^p (\Omega) )} := \left( \int_0 ^T \norm{v (\cdot,t)}_{L^p (\Omega)} ^q {\rm d}t \right)^{1/q},
\end{align}
and when $1 \leq p \leq q=\infty$ 
\begin{align}
    \norm{v}_{L^\infty (0,T; L^p (\Omega) )} := \sup_{t \in (0,T)} \norm{v(\cdot,t)}_{L^p(\Omega)}.
\end{align}

In cases where $p=q$ we simply write $L^p(Q_T) := L^p(0,T; L^p(\Omega))$. We denote by $C^{2+\sigma,1+\sigma/2} (Q_T)$ the class a functions twice differentiable in space and once differentiable in time (in the classical sense), with its second order spatial derivatives and first order time derivative being H{\"o}lder continuous for some exponent $\sigma \in (0,1)$. 

When performing the bifurcation analysis, we will denote by
\begin{equation}
    X = \{u \in H^2(0,\pi) : u'(0)=u'(\pi)=0 \}\quad \text{and}\quad Y = L^2(0,\pi),
\end{equation}
and define the complexiflication of a linear space $ Z $ to be $ Z_{\mathbb{C}}:=Z\oplus iZ=\{x_1+ix_2|x_1,x_2\in Z\} $. We will denote by $ \mathcal{N}(L) $ and $ \mathcal{R}(L) $ the kernel and the range of an operator $L$, respectively. $\sigma(L)$ and $\sigma_p(L)$ denote the spectrum and point spectrum of an operator $L$, respectively. For the complex-valued Hilbert space $Y_{\mathbb{C}}^2$, we use the standard inner product $ \langle u,v\rangle=\int_{0}^{\pi} \overline{u}(x)^{\rm T}v(x){\rm d}x $.\\

We understand a \textit{weak solution} to problem \eqref{1} in the following sense.
\begin{definition}[weak solution]\label{def:weaksoln}
    We call $( u, k )$ a \textup{weak solution} to problem \eqref{1} corresponding to the initial data $(u_{0}, k_{0})$ satisfying \eqref{Hinitialdata} if 
    \begin{align*}
    &u \in L^2(0,T; H^1 (\Omega)),\quad u (G * k_x) \in L^2(0,T; L^2(\Omega)), \\
    &k \in L^2(0,T; L^2(\Omega)),\quad u_t,\, k_t \in L^2(0,T; L^2 (\Omega)),
    \end{align*}
    and for all test functions $\phi,\, \psi \in L^2 (0,T; H^1 (\Omega))$ there holds
\begin{align}\label{defn:weaksolution}
    &\iint_{Q_T}  u_t \phi \dx {\rm d}t + d \iint_{Q_T} u_x \phi_x {\rm{d}}x {\rm{d}} t + \alpha \iint_{Q_T} u ( G * k )_x  \phi_x {\rm{d}}x {\rm{d}}t = \iint_{Q_T} f(u) \phi {\rm{d}}x {\rm{d}}t, \nonumber \\
    &\iint_{Q_T} k_t  \psi \dx {\rm d}t = \iint_{Q_T} w(u,k) \psi {\rm d}x {\rm d}t,
\end{align}
with the initial data satisfied in the sense of $L^2(\Omega)$. We call $(u,k)$ a \textup{global weak solution} if it is a weak solution for any $T>0$.
\end{definition}

When we refer to a \textit{strong solution} to problem \eqref{1}, we mean a bounded (i.e., $L^\infty(Q_T)$) weak solution $(u,k)$ corresponding to the initial data $(u_0,k_0)$ such that $u_x \in L^\infty(0,T; L^2(\Omega))$, $u_{xx} \in L^2(Q_T)$, and $u,\, k \in H^1(0,T; L^2(\Omega))$ with the initial data satisfied in the sense of $L^2(\Omega)$ (i.e., $u_t$, $k_t$ are now identifiable in a weak sense rather than in a dual sense). \\

For the equivalent local problem \eqref{1.18}, the definition of a weak solution is of the same flavor, instead with the understanding that $u (G* v)_x,\, v \in L^2(Q_T)$ and $v \in L^2(0,T; H^1( \Omega))$. Similarly, for strong solutions, we understand that $v \in L^2(0,T; H^2(\Omega))$. These are, in some sense, the minimal requirements to make sense of the two relations of \eqref{defn:weaksolution} and the counterpart for the local problem \eqref{1.18} after transformation. Notice carefully that we do not require weak differentiability of $k$ in this weak form; instead, the differentiation can be done (in a weak sense) with respect to the kernel $G$. \\

Finally, in the following sense, we refer to a \textit{classical} solution to problem \eqref{1}.
\begin{definition}[classical solution]\label{def:classicalsolution}
    We call $( u, k )$ a \textup{classical solution} to problem \eqref{1} corresponding to the initial data $( u_0, k_0 )$ if $u,\, k \in C^{2+\sigma, 1+\sigma/2} (\overline{Q}_T)$
    for some $\sigma \in (0,1)$ and satisfies the equation pointwise in $Q_T$. We call the classical solution \textup{global} if it is a classical solution for all $T>0$. 
\end{definition}

\section{Equivalence between nonlocal and local systems}\label{sec:equivalence}
\noindent In our subsequent analysis, it is essential that the local system \eqref{1.18} inherits the same solutions and stability properties as the nonlocal system \eqref{1}. In this section, we prove Proposition \ref{prop:equivalenceofstability}. The proof of Proposition \ref{prop:equivalentsystem} is included in Appendix \ref{subsec:A1}.

The linearized system of \eqref{1} at the constant steady state $(u_*,k_*)$ is given by:
\begin{equation}\label{20}
\left(\begin{array}{c}
\widetilde{u}_t \\ \widetilde{k}_t
\end{array} \right)=\mathcal{L_*}\left(\begin{array}{c}
\widetilde{u} \\ \widetilde{k}
\end{array} \right):=\left(\begin{array}{cc}
d \widetilde{u}_{xx} +f_{u*}\widetilde{u} + \alpha u_*(G\ast\widetilde{k})_{xx} \\ 
w_{u*}\widetilde{u} + w_{k*}\widetilde{k}
\end{array} \right),
\end{equation}
for $(x,t) \in \mathbb{R} \times (0,\infty)$. Substituting $\widetilde{u}=e^{\mu t + {\rm i} n x}\widehat{u}$, $\widetilde{k}=e^{\mu t + {\rm i} n x}\widehat{k}$,  (where $\mu\in \mathbb{C}, n\in \mathbb{N}$) into \eqref{20} and dropping the hats, we have 
\begin{equation}
\mu\left(\begin{array}{c}
u \\ k 
\end{array} \right)
=\left(\begin{array}{cc}
-d n^2 u  +f_{u*}u - \alpha u_* n^2C_n(G)k \\ 
w_{u*}u + w_{k*}k
\end{array} \right),
\end{equation}
where $C_n(G)=\int_{-\infty}^{+\infty}e^{-{\rm i}ny}G(y){\rm d}y=\frac{1}{1+n^2 R^2}$ is the $n^{\textup{th}}$ Fourier coefficient of the kernel $G$. Then we have the following lemma.
\begin{lemma}\label{lem:spectral1}
    Suppose $(u_*,k_*)$ is the constant steady state of system \eqref{1}. Let $\mathcal{L}_*$ be linearized operator as given in \eqref{20}. Then 
    \begin{equation*}
	\sigma(\mathcal{L_*})=\sigma_p(\mathcal{L_*})=\{\mu_n^{\pm}:n\in\mathbb{N}\cup\{0\}\}\bigcup\{w_{k*}\},
	\end{equation*}
    where 
	\begin{equation}\label{BC}
	\begin{split}
	\mu_n^{\pm}&=\dfrac{B(n^2)\pm\sqrt{B(n^2)^2-4C(\alpha,n^2)}}{2},\\
	 B(n^2)&=f_{u*}+w_{k*}-dn^2,\\
  C(\alpha,n^2)&=f_{u*}w_{k*}+\left(\alpha u_* w_{u*}\frac{1}{1+n^2 R^2}-dw_{k*}\right)n^2.
	\end{split}
	\end{equation}
\end{lemma}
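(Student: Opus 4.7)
The plan is to Fourier-diagonalize $\mathcal{L}_*$ on the $2\pi$-periodic interval, reducing the spectral problem to a family of $2\times 2$ matrix problems indexed by the wavenumber $n \in \mathbb{N} \cup \{0\}$. The crucial input is that the exponential kernel $G$ has the explicit symbol $G \ast e^{inx} = C_n(G) e^{inx}$ with $C_n(G) = (1+n^2R^2)^{-1}$, so the nonlocal convolution is simultaneously diagonalizable with $\partial_{xx}$. Expanding $(\widetilde{u}, \widetilde{k})$ in this basis, $\mathcal{L}_*$ would act as a direct sum $\bigoplus_{n \geq 0} M_n$, where $M_n$ is precisely the $2 \times 2$ matrix appearing immediately before the lemma statement. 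The eigenvalue equation $\mathcal{L}_* \Phi = \mu \Phi$ then decouples into $M_n \hat{\Phi}_n = \mu \hat{\Phi}_n$ for each mode.

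For each $n$, a direct computation gives $\mathrm{tr}(M_n) = B(n^2)$ and $\det(M_n) = C(\alpha, n^2)$ as defined in \eqref{BC}, so the roots of $\mu^2 - B(n^2)\mu + C(\alpha, n^2) = 0$ are exactly $\mu_n^\pm$. Each $\mu_n^\pm$ is therefore a genuine point eigenvalue of $\mathcal{L}_*$, with eigenfunction $(\hat{u}_n, \hat{k}_n)^T e^{inx}$, and this establishes the ``$\supseteq$'' inclusion for $\sigma_p$. At $n=0$, $M_0 = \operatorname{diag}(f_{u*}, w_{k*})$, so $w_{k*}$ is automatically one of $\mu_0^\pm$, with pure $k$-direction eigenfunction $(0, \mathrm{const})$; it is singled out in the statement because it also arises as the asymptotic limit $\lim_{n \to \infty} \mu_n^+ = w_{k*}$.

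To close the reverse inclusion and obtain $\sigma(\mathcal{L}_*) = \sigma_p(\mathcal{L}_*)$, I would show that for any $\mu$ outside the proposed set, $\mathcal{L}_* - \mu I$ admits a bounded inverse realized mode-by-mode as $(\mathcal{L}_* - \mu I)^{-1} = \bigoplus_n (M_n - \mu I)^{-1}$. The asymptotic balance $M_n \sim \operatorname{diag}(-dn^2, w_{k*})$ with bounded off-diagonal entries gives $\|(M_n - \mu I)^{-1}\| \to |w_{k*} - \mu|^{-1}$ as $n \to \infty$, so the family is uniformly bounded whenever $\mu \neq w_{k*}$. The main obstacle is precisely this uniform-in-$n$ estimate: since $\mu_n^+ \to w_{k*}$ the eigenvalues accumulate at $w_{k*}$, and one must track the off-diagonal entries carefully (using that $\alpha u_* n^2/(1+n^2R^2)$ stays bounded as $n \to \infty$) to confirm the resolvent norm is controlled by $|\mu - w_{k*}|^{-1}$. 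Everything else---the trace/determinant computation, the quadratic formula, and the identification of $w_{k*}$ from the $n=0$ block---is routine once the Fourier block-diagonalization is in hand.
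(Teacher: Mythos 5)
Your proposal is correct, and it takes the standard Fourier block-diagonalization route: the paper itself only cites \cite[Theorem~3.6]{liu2023biological} for this lemma, and that reference proves the analogous statement in exactly this way (mode-by-mode reduction to the $2\times 2$ matrices $M_n$, trace/determinant identification, and a uniform resolvent bound away from the accumulation value $w_{k*}$). The one point worth making explicit if you write this up in full is that the uniform bound on $\|(M_n-\mu I)^{-1}\|$ should be checked in the graph norm of the domain (e.g.\ weighting the $u$-component by $\langle n\rangle^{2}$ for an $H^2\times L^2$ domain); this works because the first row of $\mathrm{adj}(M_n-\mu I)$ is $O(1)$ while $\det(M_n-\mu I)\sim -dn^2(\mu-w_{k*})$, so the $\langle n\rangle^{2}$ weight is absorbed.
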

\begin{proof}
    The proof is similar to the proof of \cite[Theorem 3.6]{liu2023biological} and so we omit it.
\end{proof}

Similarly, linearizing the system \eqref{1.18} about $(u,k,v)=(u_*,k_*,v_*)$ yields

\begin{equation}\label{J*}
    \left(\begin{array}{c}
\widetilde{u}_t
\\ 
\widetilde{k}_t\\
0
\end{array}\right)=J_*\left(\begin{array}{c}
\widetilde{u}
\\ 
\widetilde{k}\\
\widetilde{v}
\end{array}\right):=
\left(\begin{array}{c}
d \widetilde{u}_{xx} + \alpha_n u_* \widetilde{v}_{xx}  + f_{u*}\widetilde{u}
\\ 
w_{u*} \widetilde{u} + w_{k*} \widetilde{k}
\\
\widetilde{v}_{xx}-\frac{1}{R^2}(\widetilde{v}-\widetilde{k})
\end{array}\right)
\end{equation}
for $(x,t) \in(0,\pi) \times (0,\infty)$. For $\lambda\in \mathbb{C}$, $n\in \mathbb{R}$, we substitute $\widetilde{u}=e^{\lambda t + {\rm i} n x}\widehat{u}$, $\widetilde{k}=e^{\lambda t + {\rm i} n x}\widehat{k}$, $\widetilde{v}=e^{\lambda t + {\rm i} n x}\widehat{v}$ into \eqref{J*} and drop the hats to obtain the characteristic equation of \eqref{1.11} given by
\begin{equation}\label{1.11}
	\lambda \left(\begin{array}{c} 
	u \\ k \\ 0
	\end{array} \right) = \left(\begin{array}{ccc}
	 -d n^2 + f_{u*} & 0 &   -\alpha u_* n^2 \\ 
	w_{u*} &   w_{k*} & 0\\
	0 & \dfrac{1}{R^2} & -n^2 - \dfrac{1}{R^2}
	\end{array} \right) \left(\begin{array}{c} 
	u \\ k \\ v
	\end{array} \right).
	\end{equation}
Similar to Lemma \ref{lem:spectral1} we obtain the following.
\begin{lemma}\label{lem:spectral2}
    Suppose $(u_*,k_*,v_*)$ is the constant steady state of system \eqref{1.18}. Let $J_*$ be linearized operator as given in \eqref{J*}. Then 
    \begin{equation*}
	\sigma(J_*)=\sigma_p(J_*)=\{\lambda_n^{\pm}:n\in \mathbb{N}\cup\{0\} \}\bigcup\{w_{k*}\},
	\end{equation*}
    where 
	\begin{equation}\label{TD}
	\begin{split}
	\lambda_n^{\pm}&=\dfrac{T(n^2)\pm\sqrt{T(n^2)^2-4D(\alpha,n^2)}}{2},\\
	  T(n^2) &=-(d n^2 -f_{u*} - w_{k*})<0,\\
    D(\alpha, n^2) &= - (dn^2 - f_{u*})w_{k*} + \alpha u_* w_{u*} \dfrac{n^2}{1+n^2 R^2}.
	\end{split}
	\end{equation}
\end{lemma}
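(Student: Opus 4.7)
The plan is to mirror the argument used for Lemma \ref{lem:spectral1}, reducing the differential-algebraic eigenvalue problem for $J_*$ to a sequence of $2 \times 2$ matrix problems indexed by Neumann Fourier modes. Since \eqref{1.18} is posed on $(0,\pi)$ with homogeneous Neumann boundary conditions, $\{\cos(nx)\}_{n\geq 0}$ forms a complete orthonormal basis of $L^2(0,\pi)$, and the constant-coefficient operator $J_*$ preserves each mode-$n$ subspace. First I would expand $(\widetilde{u}, \widetilde{k}, \widetilde{v}) = \sum_{n\geq 0} (\hat{u}_n, \hat{k}_n, \hat{v}_n)\cos(nx)$ so that the full spectral problem decouples into the $3 \times 3$ matrix equation \eqref{1.11} for each fixed $n$.

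Next, I would exploit the algebraic character of the third row of \eqref{1.11}: since $\lambda$ does not appear there, it can be solved explicitly as $\hat{v}_n = \hat{k}_n / (1 + n^2 R^2)$. Substituting into the first row collapses \eqref{1.11} to the reduced $2 \times 2$ eigenvalue problem
\[
\lambda \begin{pmatrix} \hat{u}_n \\ \hat{k}_n \end{pmatrix} = A_n \begin{pmatrix} \hat{u}_n \\ \hat{k}_n \end{pmatrix}, \qquad A_n := \begin{pmatrix} -dn^2 + f_{u*} & -\dfrac{\alpha u_* n^2}{1+n^2 R^2} \\ w_{u*} & w_{k*} \end{pmatrix}.
\]
A direct computation shows $\mathrm{tr}\,A_n = -dn^2 + f_{u*} + w_{k*} = T(n^2)$ and $\det A_n = (-dn^2 + f_{u*})w_{k*} + \alpha u_* w_{u*} n^2/(1+n^2 R^2) = D(\alpha,n^2)$, so the characteristic polynomial $\lambda^2 - T(n^2)\lambda + D(\alpha,n^2) = 0$ yields exactly the roots $\lambda_n^\pm$ given in \eqref{TD}. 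The separately listed element $\{w_{k*}\}$ appears as one of the two roots of the lower-triangular matrix $A_0$ (the other being $f_{u*}$) and is included in the union for transparency.

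To promote this mode-by-mode calculation to a statement about the full spectrum of $J_*$, and to rule out any continuous spectrum, I would rewrite the spectral problem on the constraint submanifold by noting that the third equation of \eqref{1.18} uniquely determines $\widetilde{v} = S \widetilde{k}$, where $S := (\mathrm{Id} - R^2 \partial_{xx})^{-1}$ under Neumann conditions. Standard elliptic theory makes $S$ a bounded, self-adjoint, compact operator on $L^2(0,\pi)$ that acts diagonally on the cosine basis with eigenvalues $1/(1+n^2 R^2)$. The reduced operator on $(u,k)$ then inherits Fourier-mode invariance, and its resolvent is a block-diagonal sum of the finite matrix resolvents $(\lambda I - A_n)^{-1}$; invertibility therefore fails exactly when $\lambda \in \bigcup_{n\geq 0}\{\lambda_n^\pm\}$, giving $\sigma(J_*) = \sigma_p(J_*)$ equal to the stated set. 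The main technical obstacle is verifying that the DAE elimination is a genuine bijection between $J_*$-eigenpairs on the natural ambient space and reduced-operator eigenpairs on $X \times L^2(0,\pi)$, i.e.\ that the graph map $(u,k) \mapsto (u, k, Sk)$ is a topological isomorphism onto the constraint manifold. Once that identification is in place, the mode-by-mode diagonalization above closes the argument, directly paralleling the proof of Lemma \ref{lem:spectral1}.
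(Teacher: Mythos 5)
Your proposal is correct and takes the standard approach that the paper itself defers to: the paper offers no written proof of Lemma~\ref{lem:spectral2}, stating only that it is ``similar to Lemma~\ref{lem:spectral1},'' whose own proof is delegated to an external reference. Your Fourier-mode decomposition, the algebraic elimination $\hat{v}_n = \hat{k}_n/(1+n^2R^2)$, and the resulting $2\times 2$ reduction with $\mathrm{tr}\,A_n = T(n^2)$ and $\det A_n = D(\alpha,n^2)$ are all exactly right and are what one would reconstruct from the cited source.

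One point is underdeveloped though the conclusion survives: you dismiss the separately listed $\{w_{k*}\}$ as ``included for transparency,'' noting it is a root of $A_0$. That is true, but the real reason it must appear in a closed statement $\sigma(J_*)=\sigma_p(J_*)$ is that $\lambda_n^+ \to w_{k*}$ as $n\to\infty$ (one checks $T^2-4D = (dn^2-f_{u*}+w_{k*})^2 - 4\alpha u_*w_{u*}n^2/(1+n^2R^2)$, so $\lambda_n^+ = \tfrac12\left(T+\sqrt{T^2-4D}\right)\to w_{k*}$ while $\lambda_n^-\to -\infty$). The operator has no smoothing in its $k$-component, so this accumulation is not spurious: the blockwise inverses $(\lambda I - A_n)^{-1}$ fail to be uniformly bounded in $n$ precisely as $\lambda\to w_{k*}$, and your assertion that ``invertibility fails exactly when $\lambda\in\bigcup_n\{\lambda_n^\pm\}$'' needs this uniformity check to be complete. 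It happens that $w_{k*}=\lambda_0^{\mp}$ already lies in your union, so the set you produce is the correct (closed) spectrum and each element is a genuine eigenvalue, but the argument as written silently assumes the spectrum is closed rather than establishing it. Your flagged ``technical obstacle'' about the graph map $(u,k)\mapsto(u,k,Sk)$ is routine and not where the subtlety lives.
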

We now prove Proposition \ref{prop:equivalenceofstability}.
\begin{proof}[Proof of Proposition \ref{prop:equivalenceofstability}]
Combining Lemma \ref{lem:spectral1} and Lemma \ref{lem:spectral2}, we immediately observe that in either case the linear stability of the constant steady state is determined by $T(n^2)=B(n^2)$ and  $D(\alpha, n^2)=C(\alpha, n^2)$, and so the spectrum of the linearized operator of \eqref{1} is identical to the one of the linearized operator of \eqref{1.18}.
\end{proof}

\section{Well-posedness}\label{sec:wellposed}

In this section, we establish the existence and uniqueness of classical solutions to problem \eqref{1} in the class of periodically extended solutions from $[-\pi,\pi]$ to $\mathbb{R}$. Our approach is to work with the equivalent local problem \eqref{1.18}. We first show that any strong solution to system \eqref{1.18} is necessarily unique. To obtain the existence of a weak solution, we follow the approach of \cite{liu2023biological}: estimates on the density $u$ and the auxiliary function $v$ will follow from uniform estimates on the map $k$. Once a weak solution is obtained, we can improve the integrability of the solution and show that it is strong. Using Proposition \ref{prop:equivalentsystem}, we then have the existence of a strong solution to the original problem \eqref{1}, which we then show to be classical.

\subsection{Uniqueness}

We begin with the following uniqueness result for strong solutions of the local problem \eqref{1.18}.

\begin{lemma}[Uniqueness of smooth solutions]\label{lem:uniqueness}
    There is at most one smooth solution $(u,k)$ (in the sense of Definition \ref{def:classicalsolution}), periodic on $\mathbb{R}$, solving problem \eqref{1} corresponding to the initial data $(u_0, k_0)$.
\end{lemma}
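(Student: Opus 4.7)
My plan is to pass through the equivalent local problem \eqref{1.18} via Proposition \ref{prop:equivalentsystem} and close a standard energy-type Grönwall estimate on the difference of two candidate solutions. Let $(U_1,K_1,V_1)$ and $(U_2,K_2,V_2)$ be two classical solutions of \eqref{1.18} on $(0,\pi)\times(0,T)$ with the same initial data, and write $U:=U_1-U_2$, $K:=K_1-K_2$, $V:=V_1-V_2$. Each $U_i,K_i,V_i$ lies in $C^{2+\sigma,1+\sigma/2}(\overline{Q}_T)$, hence is uniformly bounded together with its relevant derivatives; this bound will be used freely.

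The first step is to decompose the nonlinearities to isolate $U,K,V$. I would write
\[
U_1 V_{1,x}-U_2 V_{2,x}=U\,V_{1,x}+U_2\,V_x,
\]
\[
g_1(U_1)-g_1(U_2)=\Bigl(\int_0^1 g_1'(U_2+\tau U)\,d\tau\Bigr)U,
\]
and similarly for $g_2(U_i)K_i$, producing Lipschitz remainders with coefficients bounded in terms of $\|U_i\|_\infty$, $\|K_i\|_\infty$. The elliptic equation $V_{xx}-R^{-2}V=-R^{-2}K$ with homogeneous Neumann data gives the standard regularity bound $\|V\|_{H^2(0,\pi)}\le C\|K\|_{L^2(0,\pi)}$, which is what allows $V_x$ to be controlled purely by $K$.

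Next, I test the $U$-equation against $U$ and the $K$-equation against $K$, integrate by parts on $(0,\pi)$, and exploit the Neumann boundary conditions to kill boundary terms. The diffusive term yields $-d\|U_x\|_{L^2}^2$, while the advective contribution $-\alpha\int(UV_{1,x}+U_2 V_x)U_x\,dx$ is handled by Young's inequality: the first piece is absorbed into $d\|U_x\|_{L^2}^2$ using $\|V_{1,x}\|_\infty\le C$, and the second piece uses $\|U_2\|_\infty\le C$ together with $\|V_x\|_{L^2}\le C\|K\|_{L^2}$. Combining these with the Lipschitz bounds on $f$, $g_1$, $g_2$ (valid on the bounded range of the $U_i,K_i$) produces the differential inequality
\[
\frac{d}{dt}\bigl(\|U\|_{L^2}^2+\|K\|_{L^2}^2\bigr)\le C\bigl(\|U\|_{L^2}^2+\|K\|_{L^2}^2\bigr),
\]
and since both norms vanish at $t=0$, Grönwall forces $U\equiv K\equiv 0$, whence $V\equiv 0$. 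Proposition \ref{prop:equivalentsystem} then translates this back to uniqueness for the periodic nonlocal problem \eqref{1}.

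The only genuinely delicate step is controlling the nonlocal advection after differencing, because naively the term $\alpha(U_2 V_x)_x$ could contain $V_{xx}\sim K$, which would not close at the $L^2$ level without care. The resolution is to keep the advective term in divergence form and integrate by parts \emph{before} invoking elliptic regularity, so that only $V_x$ (controlled in $L^2$ by $K$) appears. Aside from this, the argument is a routine energy/Grönwall estimate made possible by the a priori smoothness and boundedness of classical solutions.
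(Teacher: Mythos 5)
Your proposal is correct and follows essentially the same route as the paper: difference the two solutions of the local system \eqref{1.18}, use the elliptic Neumann problem to bound $V_x$ in $L^2$ by $K$, test the $U$- and $K$-equations against $U$ and $K$, absorb the advective term via Cauchy/Young using the boundedness of classical solutions, and close with Gr{\"o}nwall. The decomposition $U_1 V_{1,x}-U_2 V_{2,x}=U V_{1,x}+U_2 V_x$ and the observation that only $V_x$ (not $V_{xx}$) need appear are exactly the points the paper's appendix argument relies on.
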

The proof is standard, using properties of the strong solution, energy estimates, and Gr{\"o}nwall's lemma. In fact, the uniqueness holds for any periodic solution of \eqref{1}. The technical details are provided in Appendix \ref{sec:AppUniqueness}.

\subsection{A priori estimates}

The following result provides useful estimates on the quantity $k$, essentially identical to \cite[Lemma 2.4]{liu2023biological}.
\begin{lemma}\label{ODEbounds1}
Suppose $0 \lneqq w(x,t) \in C^{1,1} (\overline{Q}_T) \cap L^{1,1} (Q_T)$ satisfies a homogeneous Neumann boundary condition in $(0,\pi)$ for each $t \in (0,T)$. For each $x \in (0,\pi)$, let $k(x,\cdot)$ solve the ordinary differential equation
\begin{equation}\label{kProto}
\frac{d k}{d t} = g_1(w) - g_2(w) k
\end{equation}
where $g_1,\, g_2$ satisfy hypothesis \textbf{\textup{\ref{H2a}}}-\textbf{\textup{\ref{H2b}}} and $k(x,0) = k_0 (x)$ is even and satisfies \eqref{Hinitialdata}. Then there holds
\begin{equation}\label{kLinfest}
\sup_{t \in [0,T]} \norm{k (\cdot,t)}_{L^\infty (\Omega)} \leq M + \norm{k_0}_{L^\infty (\Omega)}
\end{equation}
\end{lemma}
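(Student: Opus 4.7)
The plan is to solve the ODE pointwise in $x$ using an integrating factor, exploit the comparison hypothesis $g_1(u) \le M g_2(u)$ from \textbf{\ref{H2b}}, and recognize that the resulting source integral is a perfect derivative, yielding a bound independent of $w$.

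Fix $x \in (0,\pi)$ and set $a(s,t) := \int_s^t g_2(w(x,\tau)) \, {\rm d}\tau$. Since $g_2 \ge 0$ by hypothesis \textbf{\ref{H2a}}, we have $a(s,t) \ge 0$ for $s \le t$, and so $e^{-a(s,t)} \le 1$. Solving the linear ODE \eqref{kProto} via Duhamel's formula gives
\begin{equation*}
    k(x,t) = k_0(x) \, e^{-a(0,t)} + \int_0^t g_1(w(x,s)) \, e^{-a(s,t)} \, {\rm d}s.
\end{equation*}
The first term is controlled immediately by $\|k_0\|_{L^\infty(\Omega)}$ because $k_0 \ge 0$ and $e^{-a(0,t)} \le 1$.

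For the source term, I would apply \textbf{\ref{H2b}} to estimate $g_1(w(x,s)) \le M g_2(w(x,s))$ pointwise, and then observe the key identity
\begin{equation*}
    \frac{\rm d}{{\rm d}s} e^{-a(s,t)} = g_2(w(x,s)) \, e^{-a(s,t)},
\end{equation*}
since $\partial_s a(s,t) = - g_2(w(x,s))$. Integrating from $0$ to $t$ gives
\begin{equation*}
    \int_0^t g_2(w(x,s)) \, e^{-a(s,t)} \, {\rm d}s = 1 - e^{-a(0,t)} \le 1,
\end{equation*}
so the source contribution is bounded by $M$ uniformly in $(x,t)$.

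Combining the two estimates yields $k(x,t) \le \|k_0\|_{L^\infty(\Omega)} + M$ for every $(x,t) \in (0,\pi) \times (0,T)$, which is precisely \eqref{kLinfest}. There is no real obstacle here: the regularity of $w$ is only used to justify that the ODE is well-posed pointwise and that $k$ is itself continuous, so one may take a supremum in $x$; the essential mechanism is the telescoping identity above combined with $g_1 \le M g_2$. (Nonnegativity of $k$, which is implicit in the statement, follows from the same Duhamel representation since each term on the right is nonnegative.)
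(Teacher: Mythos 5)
Your proof is correct; the paper itself delegates this bound to a citation (Lemma 2.4 of the reference by Liu et al.), but the integrating-factor/Duhamel argument you give — with the key observation that $\int_0^t g_2(w)\,e^{-a(s,t)}\,{\rm d}s = 1 - e^{-a(0,t)} \le 1$ combined with $g_1 \le M g_2$ — is the standard and almost certainly the same mechanism as in the cited reference. Your aside on nonnegativity of $k$ (needed so that the $L^\infty$ norm equals the pointwise bound on $k$ rather than $|k|$) is exactly the right point to flag.
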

\begin{proof}
By solving the ODE for each $x \in (0,\pi)$ fixed, it is not difficult to verify that if $w$ and $k_0$ satisfy the homogeneous Neumann boundary condition for each $t \in (0,T)$, then $k$ does as well. The rest of the proof follows from \cite[Lemma 2.4]{liu2023biological}. 
\end{proof}

We then establish the following \textit{a priori} estimates for any smooth solution $(u,k,v)$ solving problem \eqref{1.18}.
\begin{lemma}\label{lem:aprioriestimates1}
    Fix $2 \leq p < \infty$ and suppose $(u,k,v)$ is a smooth, nonnegative solution solving problem \eqref{1.18}. Then, under hypotheses \textbf{\textup{\ref{H1b}}}-\textbf{\textup{\ref{H2b}}} there holds
    \begin{align}
        \sup_{t \in (0,T)}\norm{v(\cdot,t)}_{W^{2,p} (\Omega)} \leq C ; \label{est:vW2p} \\
        \norm{u}_{L^\infty(Q_T)} + \sup_{t \in (0,T)} \norm{ u_x (\cdot,t)}_{L^2 (\Omega)} + \norm{ u_{xx}}_{L^2(Q_T)} + \norm{u_t}_{L^2(Q_T)}\leq C; \label{est:uW212} \\
         \norm{k_t(\cdot,t)}_{L^\infty(Q_T)} \leq C \label{est:ktLinf},
    \end{align}
    where $C = C(d, \alpha,R, f^\prime(0), M, p, \norm{u_0}_{L^\infty(\Omega)},\norm{k_0}_{L^\infty(\Omega)})$ is a uniform constant.
\end{lemma}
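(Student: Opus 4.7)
The plan is to exploit the cascade structure of the equivalent local system \eqref{1.18}, obtaining estimates on $k$ first, then $v$, then $u$, and finally $k_t$. The crucial observation is that the bound on $k$ can be extracted independently of any control on $u$, which breaks what would otherwise be a circular dependency and makes the entire bootstrap tractable.

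First, applying Lemma \ref{ODEbounds1} immediately gives $\sup_t \norm{k(\cdot,t)}_{L^\infty(\Omega)} \leq M + \norm{k_0}_{L^\infty(\Omega)}$; the essential feature is that this bound relies only on \textbf{\textup{\ref{H2b}}} (in particular $g_1 \leq M g_2$) and requires no control on the size of $u$. Next, for $v$: the elliptic equation $v_{xx} - R^{-2} v = -R^{-2} k$ with Neumann data satisfies the weak maximum principle, so $\norm{v}_{L^\infty} \leq \norm{k}_{L^\infty}$; reading the equation then yields $\norm{v_{xx}}_{L^\infty} \leq 2R^{-2}\norm{k}_{L^\infty}$, and standard one-dimensional elliptic regularity combined with the Sobolev embedding $W^{2,p}\hookrightarrow C^1$ upgrades these pointwise bounds to the full $W^{2,p}$ control claimed in \eqref{est:vW2p}, providing in particular a uniform bound on $v_x$.

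With $v_x$ and $v_{xx}$ in hand, the equation for $u$ reads
\begin{equation*}
u_t = d\, u_{xx} + \alpha v_x\, u_x + (\alpha v_{xx})\, u + f(u),
\end{equation*}
a linear parabolic equation for $u$ with bounded coefficients, perturbed by the semilinearity $f(u)$. Using the linear upper bound $f(u) \leq f'(0) u$ supplied by \textbf{\textup{\ref{H1b}}}, the spatially constant function $U(t) := \norm{u_0}_{L^\infty} \exp(Ct)$, with $C$ chosen larger than $|\alpha|\norm{v_{xx}}_{L^\infty} + f'(0)$, serves as a supersolution satisfying compatible Neumann data, so the parabolic comparison principle delivers the $L^\infty(Q_T)$ bound on $u$ (with a constant that may implicitly depend on $T$). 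Nonnegativity is preserved by comparison with zero since $f(0)=0$ is compatible with the hypothesized structure.

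For the remaining $H^1$-type estimates, I would multiply the PDE by $u_{xx}$, integrate over $\Omega$, discard boundary terms via the Neumann condition, and bound the right-hand side using Cauchy--Schwarz together with the $L^\infty$ controls on $u$, $v_x$, $v_{xx}$ and the bounds on $f(u)$ furnished by \textbf{\textup{\ref{H1b}}}. This yields an estimate of the form
\begin{equation*}
\frac{1}{2}\frac{d}{dt}\norm{u_x}_{L^2}^2 + d\norm{u_{xx}}_{L^2}^2 \leq \epsilon \norm{u_{xx}}_{L^2}^2 + C_\epsilon\bigl(\norm{u_x}_{L^2}^2 + 1\bigr),
\end{equation*}
which, after absorbing $\epsilon\norm{u_{xx}}_{L^2}^2$ into the left-hand side and applying Gr\"onwall's inequality, gives the bounds on $\sup_t\norm{u_x}_{L^2}$ and $\norm{u_{xx}}_{L^2(Q_T)}$. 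The bound on $\norm{u_t}_{L^2(Q_T)}$ then follows directly from the PDE and the triangle inequality. Finally, \eqref{est:ktLinf} follows at once by reading the ODE $k_t = g_1(u) - g_2(u)k$ pointwise and invoking \textbf{\textup{\ref{H2b}}} together with the $L^\infty$ bounds already obtained for $u$ and $k$. The principal obstacle is the $L^\infty$-bound on $u$: absent the $u$-independent $k$-bound from Lemma \ref{ODEbounds1}, there would be no quantitative handle on $v_{xx}$ and the linear comparison argument would collapse; once this is in place, the subsequent estimates are a routine parabolic bootstrap.
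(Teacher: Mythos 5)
Your cascade $k \to v \to u \to k_t$ is the same as the paper's, and the first step (Lemma \ref{ODEbounds1} gives the $u$-independent bound on $k$) and last step (read the ODE for $k_t$) agree exactly. The energy estimate obtained by testing the $u$-equation against $u_{xx}$ and applying Gr\"onwall is also the paper's argument verbatim, as is extracting $u_t \in L^2(Q_T)$ by reading the PDE.

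Where you genuinely diverge is in the $L^\infty(Q_T)$ bound on $u$ (and, to a lesser extent, on $v$). The paper first derives $\norm{v(\cdot,t)}_{L^p} \le \norm{k(\cdot,t)}_{L^p}$ by multiplying the elliptic equation by $v^{p-1}$ and sending $p\to\infty$, then invokes Calder\'on--Zygmund $L^p$-elliptic regularity and Gagliardo--Nirenberg to reach $W^{2,p}$; you instead obtain $\norm{v}_\infty \le \norm{k}_\infty$ from a pointwise maximum-principle argument for the elliptic equation, read $v_{xx}=R^{-2}(v-k)$ off the equation to bound $v_{xx}$ in $L^\infty$, and only then package this as a $W^{2,p}$ estimate. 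For $u$, the paper runs an $L^p$--Gr\"onwall estimate whose rate constant grows with $p$ (so one cannot send $p\to\infty$ directly), specializes to $p=2$, applies parabolic $L^2$-regularity to put $u \in W^{2,1}_2(Q_T)$, and then uses the anisotropic Sobolev embedding to land in $C^{\sigma,\sigma/2}(\overline Q_T) \subset L^\infty(Q_T)$. You instead expand $(uv_x)_x = v_x u_x + v_{xx} u$ and compare against the spatially constant supersolution $U(t)=\norm{u_0}_\infty e^{Ct}$ with $C\ge |\alpha|\norm{v_{xx}}_\infty + f'(0)$, exploiting $f(u)\le f'(0)u$ (hypothesis \textbf{\ref{H1b}}). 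Both are correct; yours is more elementary and avoids the $L^p\!\to\!L^\infty$ bootstrap entirely, at the cost of requiring enough classical regularity to invoke the parabolic comparison principle (one should note that this hinges on $f$ being locally Lipschitz, which is covered since $f\in C^3$, and on a standard strict-supersolution perturbation to handle the degenerate case). The paper's route, being purely integral, transfers more robustly to the iteration scheme used later in Theorem \ref{thm:existweakstrong} where the approximants are controlled in Sobolev rather than H\"older classes. Either is acceptable here since the lemma's hypothesis already grants a smooth solution.
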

\begin{proof}
First, under hypotheses \textbf{\textup{\ref{H2a}-\ref{H2b}}} we have that $\norm{k}_{L^\infty(Q_T)} \leq M + \norm{k_0}_{L^\infty (\Omega)}$ by Lemma \ref{ODEbounds1}. 

Next, we note that since $k$ is nonnegative, $v$ is also nonnegative by definition. For $p \geq 2$, we multiply the equation for $v$ by $v^{p-1}$, integrate by parts and apply H{\"o}lder's inequality to find for each fixed $t \in (0,T)$ that
    \begin{align}
        \int_\Omega v^p \dx &= \int_\Omega k v^{p-1} \dx - R^2 (p-1) \int_\Omega v^{p-2} \as{v_x} \dx \leq \norm{k(\cdot,t)}_{L^p (\Omega)} \norm{v(\cdot,t)}_{L^p(\Omega)}^{p-1}.
    \end{align}
    Upon rearrangement we find that for each $t \in (0,T)$ fixed, $\norm{v(\cdot,t)}_{L^p(\Omega)} \leq \norm{k(\cdot,t)}_{L^p(\Omega)}$. Taking $p \to \infty$, applying estimate \eqref{kLinfest} and taking the supremum over $(0,T)$ leaves
    $$
    \norm{v}_{L^\infty(Q_T)} \leq M + \norm{k_0}_{L^\infty(\Omega)}.
    $$ By $L^p$-estimates for strong solutions of second order elliptic equations (see, e.g., \cite[Theorem 1]{Wang2003Geometric}), we conclude that for each $t \in (0,T)$ fixed there holds
    \begin{align}
        \norm{v_{xx}(\cdot,t)}_{L^{p} (\Omega)} \leq&\, C_1 \left( \norm{v(\cdot,t)}_{L^p(\Omega)} + \norm{k (\cdot,t)}_{L^p(\Omega)} \right) \nonumber \\
        \leq&\, 2 C_1 ( M + \norm{k_0}_{L^\infty(\Omega)} ),
    \end{align}
    where $C_1$ depends only on $R$, $\as{\Omega}$ and $p$. Paired with the Gagliardo-Nirenberg interpolation inequality, we obtain estimate \eqref{est:vW2p}.
    
    Since $v(\cdot,t) \in  W^{2,2}(\Omega)$, we apply the Sobolev embedding to $v_x(\cdot,t) \in H^1 (\Omega)$ for each $t \in (0,T)$ fixed to find
    $$
\norm{v_x (\cdot,t)}_{C^\sigma (\Omega)} \leq C_2 \norm{v_x (\cdot,t)}_{H^1(\Omega)}\leq C_1 (M + \norm{k_0}_{L^\infty(\Omega)}) =: C_2 ,
    $$
    for some $\sigma \in (0,1)$, and so $v_x \in L^\infty(Q_T)$. 

We now obtain estimates on $u$. To this end, we take the derivative of $\tfrac{1}{p}\norm{u(\cdot,t)}_{L^p (\Omega)}^p$ with respect to time and integrate by parts to obtain
\begin{align}\label{est:1.9}
    \frac{1}{p}\frac{{\rm d}}{{\rm d}t} \int_\Omega u^p \dx =& - (p-1) \int_\Omega u^{p-2} \as{u_x}^2 \dxi \nonumber \\
    -&\, \alpha (p-1) \int_\Omega u^{p-1} u_x v_x \dx + \int_\Omega (u) ^{p-1} f(u) \dx .
\end{align}
Cauchy's inequality with $\varepsilon = (\alpha C_2)^{-1}$ yields
\begin{align}
    \as{\alpha} u^{p-1} \as{u_x}\as{ v_x} \leq \frac{1}{2} u^{p-2} \as{u_x}^2 + \frac{(\alpha C_2)^2}{2} u^p .
\end{align}
By hypothesis \textbf{\textup{\ref{H1b}}} there holds $f(u) \leq f^\prime (0) u$ and so \eqref{est:1.9} becomes
\begin{align}\label{est:1.11}
    \frac{1}{p} \frac{{\rm d}}{{\rm d}t} \int_\Omega u^p \dx \leq& -\frac{(p-1)}{2} \int_\Omega u^{p-2} \as{u_x}^2 \dx + \left[f^\prime(0) + (\alpha C_2)^2 (p-1)\right] \int_\Omega u ^p \dx. 
\end{align}
Dropping the negative term, Gr{\"o}nwall's lemma implies that
\begin{align}\label{est:01.11}
    \sup_{t \in (0,T)} \norm{u (\cdot,t)}_{L^p (\Omega)} \leq e^{C_3 T} \norm{u_0}_{L^p(\Omega)} ,
\end{align}
where $C_3:= f^\prime(0) + (\alpha C_2)^2 (p-1)$. Hence, $\sup _{t \in (0,T)} \norm{u(\cdot,t)}_{L^p(\Omega)}$ is bounded for any $p > 1$ and any $T > 0$ fixed. Notice carefully that the exponent in estimate \eqref{est:1.11} depends critically on $p$, and so we cannot yet take the limit as $p \to +\infty$. 

Returning to \eqref{est:1.11}, we choose $p=2$ and integrate over $(0,T)$ to find that
\begin{align}
    \norm{u_x}_{L^{2,2}(Q_T)}^2 \leq C_3 \norm{u}_{L^{2,2}(Q_T)}^2 ,
\end{align}
and so $u_x$ is bounded in $L^{2,2}(Q_T)$ by estimate \eqref{est:01.11}.

We now apply $L^2$-theory for parabolic equations. Expanding the right-hand side of the equation for $u$, we have
$$
\alpha (u v_x )_x + f(u) = \alpha \left[ u_x v_x + u v_{xx} \right] + f(u) .
$$
Since $v_x \in L^\infty(Q_T)$ and $u_x \in L^{2}(Q_T)$, their product $v_x u_x \in L^{2}(Q_T)$. Similarly, since $u,\; v_{xx} \in L^{\infty,4} (Q_T)$, their product $u v_{xx} \in L^{2,2}(Q_T)$. Finally, from hypothesis \textbf{\textup{\ref{H2b}}} and the boundedness of $u$ in $L^\infty(0,T; L^p(\Omega))$ for any $p \geq 1$, it follows that $f(u) \in L^{2,2}(Q_T)$. By $L^2$-estimates for parabolic equations (see, e.g., \cite[Ch. 3]{Wu2006EllipticParabolic}), $u$ is bounded in $W^{2,1}_2 (Q_T)$. By the $t$-anisotropic Sobolev embedding (see, e.g., \cite[Theorem 1.4.1]{Wu2006EllipticParabolic}), we have that in fact $u \in C^{\sigma, \sigma/2} (\overline{Q}_T)$ for any $\sigma \in (0,1/2]$. In particular, $u \in L^\infty(Q_T)$. 

By hypothesis \textbf{\textup{\ref{H2b}}} and the boundedness of $u$ and $k$ over $Q_T$, it follows immediately that $\norm{k_t (\cdot,t)}_{L^p(\Omega)} = \norm{| g_1(u) - g_2(u) k|(\cdot,t)}_{L^p (\Omega)}$ is bounded, and estimate \eqref{est:ktLinf} follows.

Finally, we return to the equation for $u$ and multiply it by $u_{xx}$ to find
\begin{align}
    \frac{1}{2} \frac{{\rm d}}{{\rm d}t} \int_\Omega \as{u_x}^2 \dx + d \int_\Omega \as{u_{xx}}^2 \dx = \alpha \int_\Omega u_{xx} (u_x v_x + u v_{xx} ) \dx + \int_\Omega f(u) u_{xx} \dx .
\end{align}
Since $v_x$, $u \in L^\infty(Q_T)$, it is not difficult to see that by Cauchy's inequality, there holds
\begin{align}
    \frac{1}{2} \frac{{\rm d}}{{\rm d}t} \int_\Omega \as{u_x}^2 \dx + \frac{d}{2} \int_\Omega \as{u_{xx}}^2 \dx \leq & \, C_4 \int_\Omega \as{u_x}^2 \dx + C_5 (t) ,
\end{align}
where $C_4 = C_4 (\alpha, d, \norm{v_\xi}_{L^\infty(Q_T)})$ and 
$$
C_5(t) = \norm{u}_{L^\infty(Q_T)} ^2 \norm{v_{xx} (\cdot,t)}_{L^2(\Omega)}^2 + \norm{f(u(\cdot,t))}_{L^2(\Omega)}^2 \in L^1(0,T).
$$
Estimate \eqref{est:uW212} then follows via Gr{\"o}nwall's lemma.
\end{proof}

\subsection{Existence of a weak solution \& improved regularity}

We now prove the existence of a weak solution to system \eqref{1.18} and improve its regularity to that of a strong solution. Then, from Proposition \ref{prop:equivalentsystem}, any strong solution to problem \eqref{1.18} yields a strong solution to the nonlocal problem \eqref{1}. We then show that the solution is classical in the sense of Definition \ref{def:classicalsolution}. We begin with the following.
\begin{theorem}\label{thm:existweakstrong}
    Suppose hypotheses \textbf{\textup{\ref{H1b}}}-\textbf{\textup{\ref{H2b}}} hold, and that the initial data $(u_0, k_0)$ satisfies \eqref{Hinitialdata}. Then there exists a weak solution $(u,k,v)$ solving problem \eqref{1.18} in the sense of Definition \ref{def:weaksoln}. Moreover, the weak solution obtained is in fact the unique, global strong solution solving problem \eqref{1.18}. 
\end{theorem}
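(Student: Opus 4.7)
The plan is to construct a weak solution of \eqref{1.18} by an iteration that decouples the three equations, obtain uniform bounds via the a priori estimates of Lemma \ref{lem:aprioriestimates1}, pass to a limit by Aubin--Lions compactness, and finally upgrade the weak solution to a strong one using those same estimates. Uniqueness follows from Lemma \ref{lem:uniqueness} (via Proposition \ref{prop:equivalentsystem}), and global-in-time existence is immediate once all estimates are shown to be uniform on any fixed interval $[0,T]$.

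Concretely, fix $T>0$, set $u^{(0)}\equiv u_0$, and given $u^{(n)}\in L^\infty(Q_T)$ define successively: $k^{(n+1)}$ as the pointwise-in-$x$ solution of the linear scalar ODE $k_t = g_1(u^{(n)}) - g_2(u^{(n)})k$ with $k(x,0)=k_0(x)$, obtained by the integrating-factor formula; $v^{(n+1)}$ as the solution of the Neumann elliptic problem $v - R^2 v_{xx} = k^{(n+1)}$; and $u^{(n+1)}$ as the solution of the \emph{linear} parabolic Neumann problem with drift coefficient $\alpha v_x^{(n+1)}$ and reaction $f(u^{(n+1)})$, existence of which follows from standard $L^2$ parabolic theory once one observes that $v_x^{(n+1)}\in L^\infty(Q_T)$. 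A careful re-reading of Lemma \ref{lem:aprioriestimates1} shows that each of its estimates is derived in exactly this ODE $\to$ elliptic $\to$ parabolic order and therefore transfers uniformly to the iterates: the $L^\infty$-bound on $k^{(n+1)}$ from Lemma \ref{ODEbounds1} depends only on $M$ and $\norm{k_0}_{L^\infty(\Omega)}$; the $W^{2,p}$-bound on $v^{(n+1)}$ depends only on the bound on $k^{(n+1)}$; and the parabolic bounds on $u^{(n+1)}$ depend only on $\norm{v_x^{(n+1)}}_{L^\infty(Q_T)}$, $\norm{u_0}_{L^\infty(\Omega)}$ and the growth control on $f$ from hypothesis \textbf{\ref{H1b}}.

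Consequently $\{u^{(n)}\}$ is uniformly bounded in $L^\infty(Q_T)\cap L^2(0,T;H^2(\Omega))$ with $\{u_t^{(n)}\}$ uniformly bounded in $L^2(Q_T)$. Aubin--Lions then extracts a subsequence converging strongly in $L^2(0,T;H^1(\Omega))$ and a.e.\ in $Q_T$. From the integrating-factor representation and a Grönwall argument based on the local Lipschitz character of $g_1, g_2$ (a consequence of the polynomial bound in \textbf{\ref{H2b}} together with the uniform $L^\infty$-bound on $u^{(n)}$), one obtains strong convergence of $k^{(n)}$ in $L^2(Q_T)$; elliptic regularity and the Sobolev embedding then yield strong convergence of $v_x^{(n)}$ in $L^\infty(Q_T)$ along a further subsequence. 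These convergences suffice to pass to the limit in the quadratic term $u^{(n)} v_x^{(n)}$ and in the nonlinear reactions $f(u^{(n)})$, $g_i(u^{(n)})$, producing a weak solution of \eqref{1.18} in the sense of Definition \ref{def:weaksoln}. Because the limit inherits every bound from Lemma \ref{lem:aprioriestimates1}, it already satisfies the integrability required of a strong solution.

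The main obstacle I anticipate is the compactness step for the ODE variable $k^{(n)}$: unlike $u^{(n)}$, the ODE provides no spatial smoothing, so strong compactness of $k^{(n)}$ cannot be obtained from an Aubin--Lions argument applied to its own right-hand side. Instead it must be extracted directly from the strong convergence of $u^{(n)}$ through the explicit integrating-factor formula, and this is precisely where hypothesis \textbf{\ref{H2b}} is essential --- the polynomial bound on $g_2$, combined with the $L^\infty$-bound on $u^{(n)}$, is what makes $g_1, g_2$ Lipschitz on the relevant bounded set and allows a Grönwall estimate on $k^{(n+1)}-k^{(n)}$ to close. All estimates being uniform in $T$, one concludes globality in time, and uniqueness then follows from Lemma \ref{lem:uniqueness}.
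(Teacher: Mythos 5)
Your proposal follows essentially the same path as the paper: a decoupled ODE$\to$elliptic$\to$parabolic iteration, uniform bounds on the iterates coming from Lemma \ref{ODEbounds1} and Lemma \ref{lem:aprioriestimates1}, compactness extraction for $u$ from the $W^{2,1}_2$ bound, strong convergence of $k$ recovered from the explicit integrating-factor solution of the ODE (not from Aubin--Lions), passage to the limit in the weak form, and then reading off strong regularity from the same a priori estimates, with uniqueness deferred to Lemma \ref{lem:uniqueness}. The only superficial difference is that you invoke Aubin--Lions by name where the paper uses the anisotropic Sobolev embedding $W^{2,1}_2 \hookrightarrow C^{\sigma,\sigma/2}$ to the same effect, and one small imprecision: the estimates of Lemma \ref{lem:aprioriestimates1} are not uniform in $T$ (e.g.\ the Gr\"onwall factor $e^{C_3 T}$), so global existence is concluded because they hold for \emph{every} fixed $T$, not because they are $T$-independent.
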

\begin{proof}[Proof of Theorem \ref{thm:existweakstrong}]
    Fix $T>0$. We first construct approximate solutions in a similar fashion to, e.g., \cite[Theorem 1.1]{liu2023biological} or \cite[Theorem 4.5]{Chazelle2017}, through iterates $(u_m, k_m, v_m)$ for $m \geq 2$  given by
\begin{equation}\label{1.18iterate}
   \begin{cases}
(u_m)_t = d (u_m)_{xx} +\alpha ( u_m (v_m)_{x})_{x} + f(u_m), & x\in (0,\pi), \ t>0, \\
(k_m)_t = g_1(u_{m-1}) - g_2(u_{m-1}) k_m, & x \in (0,\pi), \ t> 0, \\
0 = (v_m)_{xx}-\frac{1}{R^2}(v_m-k_m), & x \in (0,\pi), \ t> 0,
\end{cases} 
\end{equation}
where $(u_m(x,t), k_m(x,t), v_m(x,t))$ satisfy 
$$
\lim_{t\to0^+} (u_m(x,t), k_m(x,t), v_m(x,t)) = (u_0(x), k_0(x), G*k_0(x))
$$
uniformly in $[0,\pi]$.\\ 

\noindent\textbf{Step 1:} We claim that the sequence $\{ (u_m, k_m, v_m) \}_{m\geq2}$ generated by \ref{1.18iterate} is well defined. Indeed, given a smooth initial iterate $u_1(x,t)$ satisfying a homogeneous Neumann boundary condition (e.g., take $u_1$ to be the solution to the heat equation with $u_1(x,0) = u_0(x)$), $k_2(x,t)$ is well-defined by solving the ODE for each $x$ fixed. So, $k_2(x,t)$ is smooth, positive in $Q_T$, and satisfies the homogeneous Neumann boundary condition. By the theorem of Lax-Milgram, there exists a smooth solution $v_2(x,t)$ solving the elliptic problem for each $t \in (0,T)$ fixed. Moreover, the solution is unique subject to the scaling $\int_0 ^\pi v_2(x,t) \dx = \int_0 ^\pi k_2(x,t) \dx$ for each $t\in(0,T)$. By Schauder theory for parabolic equations, the smoothness of $v_2$ ensures that there exists a unique smooth solution $u_2(x,t)$ satisfying the boundary condition and $\lim_{t \to 0^+} u_2(x,t) = u_0 (x)$. One may then proceed inductively, using the smoothness of the iterate $u_{m-1}(x,t)$ to obtain the existence of a unique solution $(u_m, k_m,v_m)$ for any $m \geq 3$. \\

\noindent\textbf{Step 2:} We now obtain uniform bounds on this sequence. By Lemma \ref{ODEbounds1} we have
\begin{align*}
    \sup_{t \in (0,T)} \norm{k_m (\cdot,t)}_{L^\infty(\Omega)} &\leq M + \norm{k_0}_{L^\infty(\Omega)},
\end{align*}
where $M$ depends on $g_1$, $g_2$ but remains independent of $m$. In particular, $\{k_m\}_{m \geq 2}$ is bounded in $L^\infty (Q_T)$. By Lemma \ref{lem:aprioriestimates1}, we then have the following uniform estimates on the sequences $\{u_m\}_{m\geq2}$, $\{ k_m\}_{m\geq2}$ and $\{ v_m\}_{m\geq2}$:
\begin{enumerate}[(A)]
    \item $\{ u_m \}_{m\geq2}$ is bounded in $L^\infty(Q_T)\cap W^{2,1}_2 (Q_T)$;
    \item $\{ k_m \}_{m\geq2}$ is bounded in $H^1(0,T; L^\infty(\Omega))$;
    \item $\{ v_m \}_{m \geq 2}$ is bounded in $L^\infty(0,T; W^{2,p} (\Omega))$ for any $p \geq 1$.
\end{enumerate}

\noindent\textbf{Step 3:} We now extract a convergent subsequence and show that it is a weak solution to problem \eqref{1.18} in the sense of Definition \ref{def:weaksoln}. 

First, (A) implies the existence of a limit candidate, denoted by $u$, and a subsequence (still denoted by $m$) such that $u_m \to u$, $(u_m)_x \to u_x$ strongly in $L^2(Q_T)$ as $m \to +\infty$. Moreover, there exists a limit $u_t$ such that (up to subsequence) $(u_m)_t \to u_t$ weakly in $L^2(Q_T)$ as $m \to +\infty$. By Hypothesis \textbf{\ref{H2a}} (local Lipschitz continuity is sufficient) and the uniform boundedness of $\{u_m\}_{m\geq2}$ there holds $f(u_m) \to f(u)$ strongly in $L^2(Q_T)$ as $m \to +\infty$. 

Then, (C) implies that for each $t \in (0,T)$ fixed there exists a limit $v_x$ such that $(v_m)_x \to v_x$ strongly in $L^2(\Omega)$, and so too in $L^2(Q_T)$. Since $u_m \to u$ pointwise by the uniform H{\"o}lder continuity of the iterates, we conclude that $u_m (v_m)_x \to u v_x$ strongly in $L^2(Q_T)$. 

Therefore, given any test function $\phi \in L^2(0,T; H^1(\Omega))$ there holds in the limit as $m \to +\infty$:
\begin{align}
    \iint_{Q_T} u_t \phi \dx {\rm d}t + \iint_{Q_T} \phi_x u_x \dx {\rm d}t = & \, \alpha \iint_{Q_T} \phi_x u v_x \dx {\rm d}t + \iint_{Q_T} \phi f(u) \dx {\rm d}t. 
\end{align}
Since $\{ u_m\}_{m\geq2}$ is bounded in $W^{2,1}_2 (Q_T)$, by the Sobolev embedding we have that $\{ u_m \}_{m \geq2}$ is bounded in $C^{\sigma, \sigma/2} (\overline{Q}_T)$ for some $\sigma \in (0,1)$, and so in fact $u_m \to u$ pointwise as $m\to +\infty$. Solving directly for $k_m$ we find that there holds $k_m \to k$ pointwise as $m \to +\infty$ as well. Consequently, $w(u_m, k_m) \to w(u,k)$ pointwise as $m \to +\infty$, and so converges strongly in $L^2(Q_T)$. By estimate (B), there exists a subsequence such that $(k_m)_t \to k_t$ weakly in $L^2(Q_T)$ as $m \to +\infty$. Hence, in the limit, there holds
\begin{align}
    \iint_{Q_T} k_t \phi \dx {\rm d}t = \iint_{Q_T} \phi w(u,k) \dx {\rm d}t.
\end{align}
Finally, in the limit as $m \to +\infty$ there holds for any test function $\phi$
\begin{align}
    \iint_{Q_T} \phi_x v_x \dx {\rm d}t = \frac{1}{R^2} \iint_{Q_T} \phi ( k - v) \dx {\rm d}t .
\end{align}

Consequently, $(u,k,v)$ is a global weak solution solving problem \eqref{1.18} in the sense of Definition \ref{def:weaksoln}. The since $u,k \in H^1(0,T; L^2(\Omega))$, we have that in fact $u,k \in C(0,T; L^2(0,T))$ and the initial data is satisfied in the sense of $L^2(\Omega)$. 

By the H{\"o}lder continuity of $u$, we immediately have that $\lim_{t \to 0^+} u(\cdot,t) = u_0(\cdot)$ in $C(\overline{\Omega})$. Then, since $ k \in H^1 (0,T; L^\infty(\Omega))$, we have that $k \in C( 0,T; L^\infty (\Omega))$ and so there holds $\lim_{t \to 0^+} k(\cdot,t) = k_0 (\cdot)$ in $C(\overline{\Omega})$. From the estimates of Lemma \ref{lem:aprioriestimates1}, we have that in fact the solution is a global strong solution.
\end{proof}

Finally, we conclude with a brief proof of the main Theorem \ref{thm:wellposed}.

\begin{proof}[Proof of Theorem \ref{thm:wellposed}]
By Theorem \ref{thm:existweakstrong}, there exists a unique, global strong solution $(U,K,V)$ solving problem \eqref{1.18} in $(0,\pi)$. By Proposition \ref{prop:equivalentsystem}, we may reflect $(U,K)$ across $x=0$ and extend the result periodically over $\mathbb{R}$ to obtain a global, even, $2\pi$-periodic strong solution $(u,k)$ solving problem \eqref{1}. By Lemma \ref{lem:uniqueness}, the solution is unique. It remains to show that the solution is classical.

To show that the solution is classical, we show that 
$(u (G*k)_x)_x + f(u)$ is H{\"o}lder continuous and apply Schauder theory for parabolic equations. To this end, we first note that we already have $u,k \in C^{\sigma, \sigma/2} (\overline{Q}_T)$ for some $\sigma \in (0,1)$, and so $\as{f(u)} \in L^\infty(Q_T)$.

Next, using the property that $(G * k)_{xx} = (1/R^2) ( G*k - k)$, it follows from the boundedness of $k$ in $L^p(Q_T)$ that $(G * k)_{xx} \in L^p(Q_T)$ for any $p \geq 1$ fixed. The boundedness of $u$ implies that the product $u (G*k)_{xx} \in L^p(Q_T)$ for any $p \geq 1$.

Finally, we improve the integrability of $u_x$ as follows. Since $u_x \in L^\infty(0,T; L^2(\Omega))$ with $u_{xx} \in L^2(Q_T)$, we have that $u_x \in V_2(Q_T)$, where $V_2(Q_T)$ is the Banach space comprised of functions belonging to $L^\infty(0,T; L^2(\Omega)) \cap W^{1,0}_2 (Q_T)$ with finite norm $\norm{z}_{V_2(Q_T)} := \norm{z}_{L^\infty(0,T; L^2(\Omega))} + \norm{z_x}_{L^2(Q_T)}$ (see, e.g., \cite[Ch. 1]{Wu2006EllipticParabolic}). By the embedding theorem \cite[Theorem 1.4.2]{Wu2006EllipticParabolic}, we have that there holds $\norm{u_x}_{L^{10/3} (Q_T)} \leq C \norm{u_x}_{V_2(Q_T)}$ for some uniform constant $C>0$. Since $k \in L^\infty(Q_T)$, we have $(G*k)_x \in L^\infty (Q_T)$ by direct computation and Young's convolution inequality, and so we have that the product $u_x (G*k)_x \in L^{10/3}(Q_T)$. 

Together, we have shown that $(u (G*k)_x)_x + f(u) \in L^{10/3} (Q_T)$, and so by $L^p$-estimates for parabolic equations we conclude that in fact $u \in W^{2,1}_{10/3} (Q_T)$. Using again the $t$-anisotropic Sobolev embedding we find that $u \in C^{1+\sigma, (1+\sigma)/2} (\overline{Q}_T)$ for $\sigma = 1/10$. In particular, we have shown that $u$ is once differentiable in space. 

Solving the differential equation for $k$, the regularity from Hypothesis \textbf{\ref{H2a}} ensures that $k$ is also once differentiable in space. Hence, the products $u_x G* k_x$ and $u (G*k)_{xx} = u ( G*k - k)/ R^2$ are H{\"o}lder continuous. Since $f$ is differentiable, it is obviously H{\"o}lder continuous. By Schauder estimates for parabolic equations (see, e.g., \cite[Ch. 7]{Wu2006EllipticParabolic}) and the regularity of the initial data, we conclude that $u \in C^{2+\sigma, 1 + \sigma/2} (\overline{Q}_T)$ for some $\sigma \in (0,1)$. Direct calculation and the regularity for $g_i(\cdot)$ from hypothesis \textbf{\ref{H2b}} ensures that $k \in C^{2+\sigma, 1 + \sigma/2} (\overline{Q}_T)$, and the solution is classical. 
\end{proof}

\section{Bifurcation Analysis}\label{sec:biftheory}

This section is dedicated to proving Theorems \ref{thm:1.2}-\ref{thm:bif3}. 
We begin with a local stability analysis of $(u_*,k_*,v_*)$.

\subsection{Local stability of the constant steady state}

From \eqref{1.11}, the characteristic equation of \eqref{1.11} is
\begin{equation}\label{1.12}
\begin{aligned}
    H(\lambda, \alpha, n^2):=\lambda^2 - T(n^2)\lambda + D(\alpha,n^2)=0,
    \end{aligned}
\end{equation}
where $T(n^2)$ and $D(\alpha,n^2)$ be defined in \eqref{TD}.

It is well known that $\lambda<0$ always holds when $T(n^2)<0$ and $D(n^2, \alpha)>0$, which yields that the positive steady state solution is always stable. However, the steady state solution becomes unstable by having $\lambda=0$ as an eigenvalue through the steady-state bifurcation ($D(n^2,\alpha)=0$) or purely imaginary eigenvalues $\lambda=\pm {\rm i}\omega (\omega>0)$ through Hopf bifurcation ($T(n^2)=0$). Noting that $T(n^2)<0$ for any $n\in \mathbb{R}$ by assumptions \textbf{\textup{\ref{H1b}}} and \textbf{\textup{\ref{H2a}}}, \eqref{1.18} does not exhibit Hopf bifurcation from the constant steady state solution $(u_*,k_*,v_*)$. Moreover, $D(n^2,\alpha)>0$ when $w_{u*}=0$. Therefore, if $w_{u*}\neq 0$, using $\alpha$ as the bifurcation parameter, we can obtain the bifurcation point given by \eqref{1.13}.

Therefore, when condition $\alpha=\alpha_n(R)$ is satisfied, $0$ is an eigenvalue of the characteristic equation \eqref{1.12}, and the eigenvalue is a simple one. We now check the transversality condition. Differentiating \eqref{1.12} with respect to $\alpha$ letting $\lambda=0$ gives 
\begin{equation*}
    \dfrac{\partial \lambda}{\partial \alpha}=-\dfrac{u_* w_{u*} \dfrac{n^2}{1+n^2 R^2}}{d n^2 -f_{u*} - w_{k*}}\neq 0 \ \text{for}\ n^2\neq 0.
\end{equation*}
Hence, the transversality condition holds. We have the following concavity result for $\alpha_n (R)$ with respect to $n$ for a given, fixed $R>0$.

\begin{lemma}\label{lem:1}
Fix $R>0$ and let $\alpha_n(R)$ be as defined in \eqref{1.13}. Then, the function $n^2 \mapsto \alpha_n (R)$ is concave down (up) whenever $w_{u*}>0$ ($w_{u*}<0$). Moreover, $\alpha_n (R)$ attains its maximum (minimum) at $n^* = (- f_{u*} / d R^2 )^{1/4}$ whenever $w_{u*}>0$ ($w_{u*}<0$).
\end{lemma}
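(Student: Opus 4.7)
The plan is to treat $\alpha_n(R)$ as a smooth real-valued function of the continuous variable $s := n^2 > 0$ (with $R>0$ fixed), compute its second derivative to determine concavity, then locate its critical point via a first-order condition. Since all quantities of interest are smooth in $s$ for $s>0$, standard one-variable calculus applies and the combinatorial structure of $\mathbb{N}$ plays no role in the concavity claim.

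First I would rewrite \eqref{1.13} by expanding the numerator and dividing through by $s$, obtaining the decomposition
\begin{equation*}
\alpha(s) = \frac{w_{k*}}{u_* w_{u*}}\left[ d + dR^2 s - f_{u*} R^2 - \frac{f_{u*}}{s} \right].
\end{equation*}
This form separates the linear-in-$s$ piece from the $1/s$ piece and makes the sign analysis transparent. Differentiating twice gives
\begin{equation*}
\alpha'(s) = \frac{w_{k*}}{u_* w_{u*}}\left[ dR^2 + \frac{f_{u*}}{s^2} \right], \qquad \alpha''(s) = -\frac{2 w_{k*} f_{u*}}{u_* w_{u*}\, s^3}.
\end{equation*}
Now I would invoke the sign information already established in the paper: $w_{k*} = -g_2(u_*)<0$ by Hypothesis \textbf{\ref{H2a}}, $f_{u*}<0$ by Hypothesis \textbf{\ref{H1a}}, and $u_*>0$. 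It follows that $w_{k*} f_{u*}/u_* > 0$, so the sign of $\alpha''(s)$ is opposite to the sign of $w_{u*}$. This gives concavity down when $w_{u*}>0$ and concavity up when $w_{u*}<0$, as claimed.

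For the extremum, the first-order condition $\alpha'(s)=0$ yields $s^2 = -f_{u*}/(dR^2)$, which is positive since $f_{u*}<0$. Taking the positive root and recalling $s=n^2$ gives $n^2 = \sqrt{-f_{u*}/(dR^2)}$ and hence $n^* = (-f_{u*}/(dR^2))^{1/4}$. Because $\alpha''(s)$ has a constant sign on $(0,\infty)$ under each scenario, the unique interior critical point is automatically a global maximum in the case $w_{u*}>0$ and a global minimum in the case $w_{u*}<0$, which is exactly the second conclusion.

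The entire argument is a short computation; I do not anticipate any substantial obstacle. The only care required is careful sign-tracking, particularly the observation that $w_{k*}$ and $f_{u*}$ are both negative (so their product is positive), which ensures that the dichotomy in concavity is governed purely by the sign of $w_{u*}$, in agreement with the partition \eqref{alpha*} of the critical parameter $\alpha$ used in Theorem \ref{thm:1.2}.
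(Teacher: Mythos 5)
Your proof is correct and follows essentially the same route as the paper: substitute $s=n^{2}$, differentiate twice, and read off the sign of $\alpha''$ from $w_{k*}<0$, $f_{u*}<0$. The only cosmetic difference is that you first rewrite $\alpha(s)$ as a sum of a linear term and a $1/s$ term before differentiating, whereas the paper differentiates the quotient directly; both yield the identical expression $-2w_{k*}f_{u*}/(u_*w_{u*}s^{3})$ for the second derivative. One small point in your favor: you explicitly solve the first-order condition to obtain $n^{*}=(-f_{u*}/(dR^{2}))^{1/4}$, which the paper states in the lemma but omits from its proof.
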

\begin{proof}
  Set $z=n^2$ in \eqref{1.13} so that we have 
  \begin{equation*}
    \alpha_z(R) = \dfrac{(d z - f_{u*}) w_{k*} (1+z R^2)}{u_* w_{u*} z}, 
\end{equation*}
Direct calculation yields
\begin{equation*}
    \dfrac{\partial\alpha_z(R)}{\partial z} = \dfrac{w_{k*}(d z^2 R^2 + f_{u*})}{u_* w_{u*} z^2},
\end{equation*}
and 
\begin{equation*}
    \dfrac{\partial^2\alpha_z(R)}{\partial z^2} = \dfrac{-2w_{k*}f_{u*}}{u_* w_{u*} z^3}=\begin{cases}
        <0,\ \text{if}\ w_{u*}>0,\\
        >0,\ \text{if}\ w_{u*}<0
    \end{cases}
    \ \text{by \textbf{\textup{\ref{H1a}}} and \textbf{\textup{\ref{H2a}}}}.
\end{equation*}
\end{proof}

We now prove Theorem \ref{thm:1.2}.
\begin{proof}[Proof of Theorem \ref{thm:1.2}]
The characteristic equation of \eqref{1.18} is the same as \eqref{1.12}, \eqref{TD} and \eqref{1.13} and yields (\romannumeral1) and (\romannumeral2). From Lemma \ref{lem:1}, it follows that $\alpha_*$ or $\alpha^*$ exist, and hence (\romannumeral3) and (\romannumeral4) hold.
\end{proof}

\subsection{Proof of Theorem \ref{thm:bif2}}

We now prove Theorem \ref{thm:bif2}.

\begin{proof}[Proof of Theorem \ref{thm:bif2}]

Recall that we fix $d,\mu,\beta>0$ and treat $\alpha$ as the bifurcation parameter. We define a nonlinear mapping $F: \mathbb{R} \times X \times Y\times X \rightarrow Y^3$ by
\begin{equation}
F(\alpha,u,k,v)=\left(\begin{array}{c}
d u_{xx} + \alpha ( u v_{x})_{x} + f(u)
\\ 
g_1(u) - g_2(u) k\\
v_{xx}-\frac{1}{R^2}(v-k)
\end{array}\right).
\end{equation}
The Fr\'{e}chet derivative of $F$ with respect to $(u,k,v)$ at $(u_*,k_*,v_*)$ is
\begin{equation}
    F_{(u,k,v)}(\alpha_n,u_*,k_*,v_*)\left(\begin{array}{c}
\phi
\\ 
\varphi\\
\psi
\end{array}\right)=
\left(\begin{array}{c}
d \phi_{xx} + \alpha_n u_* \psi_{xx}  + f_{u*}\phi
\\ 
w_{u*} \phi + w_{k*} \varphi
\\
\psi_{xx}-\frac{1}{R^2}(\psi-\varphi)
\end{array}\right).
\end{equation}
We now check the hypotheses of the theorem \cite[Theorem 1.17]{Rabinowitz1971}. Clearly, $F(\alpha,u_*,k_*,v_*)=0$ for any $\alpha\in\mathbb{R}$. Let $F^*_{(u,k,v)}(\alpha_n,u,k,v)$ be the adjoint operator of $F_{(u,k,v)}(\alpha_n,u,k,v)$, we have 
\begin{equation}
    F^*_{(u,k,v)}(\alpha_n,u_*,k_*,v_*)\left(\begin{array}{c}
\phi
\\ 
\varphi\\
\psi
\end{array}\right)=
\left(\begin{array}{c}
d \phi_{xx} + f_{u*}\phi  + w_{u*}\varphi
\\ 
w_{k*} \varphi + \frac{1}{R^2} \psi
\\
 \alpha_n u_*\phi_{xx} + \psi_{xx} -\frac{1}{R^2}\psi
\end{array}\right).
\end{equation}
It is easily seen that 
\begin{equation}
\begin{aligned}
     \mathcal{N}(F_{(u,k,v)}(\alpha_n,u_*,k_*,v_*)) = \mathrm{span}\{q_n\},\ \text{where}\ q_n=(1,M_1,M_2)\cos(nx),\\ \mathcal{N}(F^*_{(u,k,v)}(\alpha_n,u_*,k_*,v_*)) = \mathrm{span}\{q_n^*\},\ \text{where}\ q_n^*=(1,M^*_1,M^*_2)\cos(nx),
\end{aligned}
\end{equation}
and 
\begin{equation}\label{pp}
\begin{split}
    &\mathcal{R}(F_{(u,k,v)}(\alpha_n,u_*,k_*,v_*))\\
    =& \{(h_1,h_2,h_3)\in Y^3: \int_0^{\pi} (h_1 + M^*_1 h_2 + M^*_2 h_3) \cos(nx) \dx =0\},
\end{split}  
\end{equation}
where $M_1, M_2$ are defined in \eqref{1.24}, and $M^*_1, M^*_2$ are as follows:
\begin{equation}\label{M1*M2*}
    M^*_1=\dfrac{d n^2-f_{u*}}{w_{u*}},\ \ M^*_2=-\dfrac{(d n^2-f_{u*})w_{k*}R^2}{w_{u*}}=-R^2w_{k*} M^*_1.
\end{equation}
Thus, 
\begin{equation*}
    {\rm dim}(\mathcal{N}(F_{(u,k,v)}(\alpha_n,u_*,k_*,v_*)))={\rm codim}(\mathcal{R}(F_{(u,k,v)}(\alpha_n,u_*,k_*,v_*)))=1.
\end{equation*}
We next show that $F_{\alpha(u,k,v)}(\alpha_n,u_*,k_*,v_*))[q_n] \notin \mathcal{R}(F_{(u,k,v)}(\alpha_n,u_*,k_*,v_*))$.
It follows that 
\begin{equation}
    F_{\alpha(u,k,v)}(\alpha_n,u_*,k_*,v_*))[q_n] = (-u_* n^2 M_2 \cos(nx), 0 ,0)^{\rm T},
\end{equation}
we have 
\begin{equation}
    \int_0^\pi (-u_* n^2 M_2 \cos(nx)+0+0)\cos(nx) \dx \neq 0,
\end{equation}
so $F_{\alpha(u,k,v)}(\alpha_n,u_*,k_*,v_*))[q_n] \notin \mathcal{R}(F_{(u,k,v)}(\alpha_n,u_*,k_*,v_*))$ from \eqref{pp}.
Now we obtain the bifurcating solutions on $\Gamma_n$ in \eqref{1.22} by applying the Theorem \cite[Theorem 1.17]{Rabinowitz1971} of Crandall and Rabinowitz. The conclusion of $\alpha_n'(0)=0$ follows from the calculation in Appendix \ref{sec:AppdExpression}.
\end{proof}

\subsection{Proof of Theorem \ref{thm:bif3}}

Finally, we conclude with a proof of Theorem \ref{thm:bif3}.
\begin{proof}[Proof of Theorem \ref{thm:bif3}]
    From the Theorem 1.16 in \cite{Crandall1973}, there exist continuously differentiable function $r: (\alpha_n-\epsilon, \alpha_n+\epsilon)\rightarrow \mathbb{R}$ and $(\phi,\varphi,\psi): (\alpha_n-\epsilon, \alpha_n+\epsilon)\rightarrow X\times Y\times X$ such that 
    \begin{equation}
F_{(u,k,v)}(\alpha,u_*,k_*,v_*)\left[\phi(\alpha),\varphi(\alpha),\psi(\alpha)\right]^{\rm T} = r(\alpha) K \left[\phi(\alpha),\varphi(\alpha),\psi(\alpha)\right]^{\rm T},
	\end{equation}
where $K: X\times Y\times X\to Y^3$ is the inclusion map, $r(\alpha_n)=0$, $(\phi(\alpha_n),\varphi(\alpha_n),\psi(\alpha_n))^{\rm T}=q_n$. In fact, $r(\alpha)$ is the eigenvalue of the corresponding linearization operator for the constant solution $(u_*,k_*,v_*)$ and $(\phi(\alpha_n),\varphi(\alpha_n),\psi(\alpha_n))$ is the corresponding eigenfunction.
According to Eq. \eqref{1.12}, 
\begin{equation*}
     r'(\alpha_{n})=\dfrac{-u_* w_{u*}\frac{n^2}{1+n^2R^2}}{dn^2-f_{u*}-w_{k*}}.
\end{equation*}
By \cite[Theorem 1.16]{Crandall1973}, ${\rm Sign}(-s\alpha_n'(s)r'(\alpha_n))={\rm Sign}(\mu(s))$, where $\mu(s)$ is the eigenvalue of the corresponding linearization operator for the bifurcating solution at $\alpha=\alpha(s)$. Here we only prove (\romannumeral1), (\romannumeral2) can be proved similarly. 
If $w_{u*}>0$ and $\alpha_*=\alpha_M$ for $M\in {\mathbb N}$, then $r'(\alpha_{M})<0$. Moreover,  
If $\alpha_M''(0)<0$, then $\alpha_M'(s)>0$  for $s\in(-\delta,0)$, and $\alpha_M'(s)<0$  for $s\in(0,\delta)$. Thus $\mu_M(s)<0$ for $0<|s|<\delta$. Since all other eigenvalues of linearized equation at $(\alpha_M(s),u_M(s,\cdot),k_M(s,\cdot),v_M(s,\cdot))$ are negative, then $(\alpha_M(s),u_M(s,\cdot),k_M(s,\cdot),v_M(s,\cdot))$ is locally asymptotically stable. Similarly if $\alpha_M''(0)>0$, then $\mu_M(s)>0$ for $0<|s|<\delta$ hence $(\alpha_M(s),u_M(s,\cdot),k_M(s,\cdot),v_M(s,\cdot))$ is unstable. 
\end{proof}

\section{Applications}\label{sec:apps}

This section further explores our theoretical results through additional analysis and numerical simulation. While much of our main insights can be viewed simply by reading Section \ref{sec:interpretationofmainresults}, here we explore in more detail the influence of the encoding processes $g_1$ and $g_2$.

In our following simulations, we always fix 
$$
k_0 = v_0 = k_*,
$$ 
and choose the initial data $u_0$ as a random perturbation of the homogeneous state $u_*$. The perturbation is chosen from a uniformly distributed random variable in the interval $(-0.01,0.01)$ and normalized with $0$ mean. We fix the domain $\Omega=(-\pi,\pi)$ and $f(u)=u(1-u)$ so that there always holds $u_* = 1$. We also fix 
$$
g_2(u) := \mu + \beta u,
$$ 
for constants $\mu, \beta >0$. Due to the equivalence between system \eqref{1} and \eqref{1.18}, we solve the local problem \eqref{1.18} on the interval $(0,\pi)$ using MATLAB's built-in ``pdepe" function before reverting to the original solution on $\Omega = (-\pi,\pi)$. We run our simulations until the approximate time derivative $u_t$ has maximum less than $10^{-8}$, i.e., until $\norm{u_t}_\infty \leq 10^{-8}$. 

\subsection{Description of numerical bifurcation scheme} 

We discretize a window $(\alpha^* - \delta_0, \alpha^* +\delta_0)$ with approxmiately $40-60$ points about the critical threshold $\alpha^*$ obtained through our linear stability analysis. Starting from $\alpha = \alpha^*-\delta_0$ (assuming $\alpha^*>0$; the signs are reversed when $\alpha^*<0$), we run each simulation with the perturbed initial data to verify the local stability analysis. To capture possible subcritical behavior, we run a secondary sweep starting from $\alpha = \alpha^* + \delta_0$, using the terminal state of the $i^{\textup{th}}$ iteration as the initial guess for the $(i+1)^{\textup{th}}$ iteration. In those cases where we predict a subcritical bifurcation branch to occur, we can then detect this stable portion of the curve lying above the unstable portion that is not detectable by our time-dependent solver.

We are then interested in the steady state profiles depending on the positive encoding process $g_1(\cdot)$, the perceptual radius $R$, and the aggregation strength $\alpha$. We first make the following general observation: the relative rates of change of the encoding processes $g_1$ and $g_2$ determine whether the population distribution $u^*$ and the spatial map $k^*$ are \textit{in} or \textit{out} of phase with each other. This is encoded in the quantity $w_{u^*}$, whose sign depends on the sign of \eqref{q:wustar}. Hence, from \eqref{1.24}, we conclude that
\begin{align}\label{eq:solutionsinphase}
    \text{Solutions are in phase} \iff \frac{g_1^\prime (u^*)}{g_1(u^*)} > \frac{g_2^\prime (u^*)}{g_2(u^*)} = \frac{\beta}{\mu + \beta u^*}.
\end{align}
In other words, despite the kernel $G$ being held fixed so that $\alpha>0$ corresponds to repulsion, $\alpha<0$ corresponds to attraction, the spatial map $k$ may change from an \textit{attractive spatial map} to a \textit{repulsive spatial map} whenever the sign relation above changes: whether solutions are in or out of phase depends on whether the relative rate of change of the positive encoding process $g_1$ is larger or smaller than the negative encoding process $g_2$. Interestingly, this relation holds independent of the detection radius $R\geq0$. 

Therefore, we explore two key examples for which these signs change. Using Theorems \ref{thm:1.2}-\ref{thm:bif2}, we locate the first critical value $\alpha^*$ and identify the wavenumber at which patterned states are expected to emerge. Using Theorem \ref{thm:bif3}, we then compute $\alpha^{\prime \prime}(0)$ to determine the direction and stability of the bifurcating branch.\\

\subsection{Three exemplary cases}\label{sec:examples}

In what follows, we consider three distinct cases depending on the form of $g_1(u)$. Recall that we have fixed $g_2$ to be linear in $u$. Then, the first example below has approximately linear growth of $g_1(\cdot)$ while the second example has sublinear growth of $g_1(\cdot)$. The third example, while pathological, highlights that, in terms of pattern formation, the growth rate of the functions $g_i(\cdot)$ do not matter as much as the relative growth rates near $u^*$, as emphasized in Section \ref{sec:interpretationofmainresults}.

\begin{examp}\label{ex1}
    We set $g_1(u)= \rho u^2 / (1+u)$. The constant state for the spatial map is $k_*=\rho/(2(\mu+\beta))$.
The remaining parameters are fixed as follows:
\begin{align}\label{parameter}
    d=1,\ \rho=1,\ \mu=0.15,\ \beta=0.5.
\end{align}
and so we compute
\begin{align*}
 u_*=1,\  k_*=v_*\approx0.7692,\ w_{u*} \approx 0.3654>0,\ w_{k*}\approx -0.65.
\end{align*}
We first observe that for \textit{any} $\mu,\beta > 0$ (not just those fixed above) there holds
$$
\frac{g_1^\prime (u^*)}{g_1(u^*)} - \frac{g_2^\prime (u^*)}{g_2(u^*)} = \frac{2 \mu + u^*(2 \beta + \mu)}{u^* (1+u^*)(\mu + \beta^*)} > 0,
$$
and so by relation \eqref{eq:solutionsinphase}, the population distribution and spatial map will always be in phase. Consequently, we observe that by \eqref{1.13} there holds $\alpha_n<0$ for any $n \in \mathbb{N}$, and so a patterned state will only emerge due to attractive forces. 

In Figure \ref{fig:1}, we plot the critical value $| \alpha_*|$ as a function of the perceptual radius $R$ as a black line, and the critical wavenumber $n$ is given by the grey dashed line. The shaded area depicts the region of (local) stability for the homogeneous state in the $(R,\alpha)$ plane. We choose different values of $R$ so that the corresponding value $n$ to the critical threshold $\alpha_*$ is either $1$, $2$, or $3$.

In particular, we choose $R=0.12, 0.3, 2$, with the intersection between $R$ and $|\alpha_*|$ highlighted by red dots, labeled as the following points in Figure \ref{fig:1}:
\begin{align*}
    & H:= (0.12,-2.2328);\ n=3,\ \alpha_n''(0) \approx -0.9055<0;\\
    & Q:= (0.3,-3.0242);\ n=2,\ \alpha_n''(0)\approx -0.1810<0;\\
    & P:= (2,-17.7895);\ n=1,\  \alpha_n''(0)\approx 16.1136>0.
\end{align*}
In each case, we indicate the value $n$ at which $\alpha_*$ is achieved and compute $\alpha_n''(0)$.

In Figures \ref{fig:2}, \ref{fig:3}, and \ref{fig:4}, we fix each respective value of $R$ corresponding to points $H$, $Q$ and $P$, and depict the bifurcation curve locally around the corresponding critical threshold $\alpha^*(R)$. From the sign of $\alpha_n ''(0)$, we can identify whether the bifurcation direction is forward or backward. 

At points $H$ (Figure \ref{fig:2}) and $Q$ (Figure \ref{fig:3}), we find that $\alpha_{n}'' (0) < 0$ and a stable backward pitchfork bifurcation occurs by Theorem \ref{thm:bif3} (note that it is backwards due to $\alpha^* < 0$). Moreover, these occur at the expected wavenumber $n=3$ and $n=2$.

Conversely, at point $P$ we find $\alpha_n^{\prime\prime} (0)>0$, and the bifurcation is forward and unstable by Theorem \ref{thm:bif3}. Moreover, we expect this low-amplitude state to occur with frequency $n=1$; however, since it is now an unstable state, this is not what is detected by the time-dependent solver used here. Instead, we observe a high-amplitude state with a single aggregate. In Figure \ref{fig:4}, we observe precisely this subcritical behavior and stability properties described analytically.

\begin{figure}
    \centering
    \includegraphics[trim={4cm 4cm 5cm 4cm},clip,width=\linewidth]{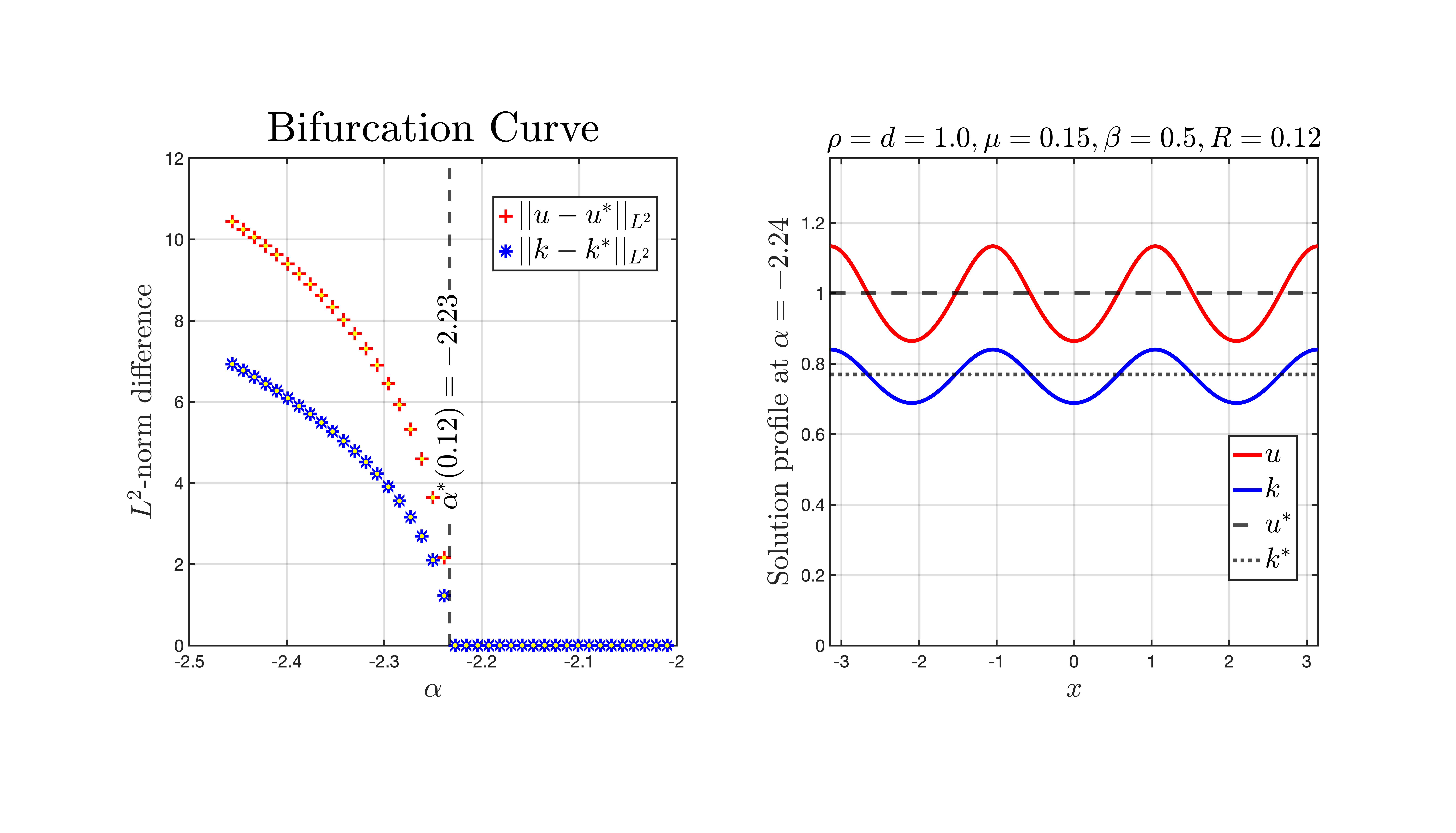}
    \caption{A bifurcation diagram near the critical threshold $\alpha^*$ when $R=0.12$ for \textbf{Example} \ref{ex1} of Section \ref{sec:apps} (left panel) and the solution profile at steady state just beyond the critical threshold (right panel).}
    \label{fig:2}
\end{figure}
\end{examp} 

\begin{examp}\label{ex2}
We now set $g_1(u) = \rho u/(1+u)$ with all parameter choices consistent with \eqref{parameter}, and all with all other functions the same as in \textbf{Example} \ref{ex1}. We then compute
\begin{align*}
    u_*=1,\ k_*=v_*\approx 0.7692,\ w_{u*}\approx -0.1346<0,\ w_{k*}\approx -0.65.
\end{align*}
In fact, $k_*$ and $w_{k*}$ are the same as in \textbf{Example} \ref{ex1}; the key difference is the sign of $w_{u*}$, which depends precisely on the quantity 
$$
\mu - \beta u_*^2 = \mu - \beta.
$$
From our parameter choices, $\sign (w_{u*}) = \sign(\mu - \beta) < 0$, and therefore, the sign of the critical threshold is reversed so that $\alpha ^* > 0$. Moreover, we now expect patterned states, whenever they occur, to be out of phase as predicted by Theorem \ref{thm:bif2}, since we now find $M_1<0$. Similar to the curve depicted in Figure \ref{fig:1} for \textbf{Example} \ref{ex1}, we can identify the critical aggregation strengths for varying $R$, along with the corresponding critical wavenumber for \textbf{Example} \ref{ex2}. Since the curve is similar, we do not display it here. At all points $H$, $Q$ and $P$, we find that $\alpha^{\prime \prime}_n (0) > 0$, and so all emergent branches are expected to be supercritical and stable.

In Figure \ref{fig:6}, we display the same bifurcation figures as for \textbf{Example} \ref{ex1}, noting the consistent supercritical behavior, stability of the emergent branches, and the phase properties expected for these cases. Interestingly, depending on the relative sizes of $\mu$ and $\beta$, the population may be in or out of phase with the spatial map. This fundamentally differs from \textbf{Example} \ref{ex1}, where the population and spatial map are always in phase regardless of the chosen parameter values.
\end{examp}

\begin{figure}
    \centering
    \includegraphics[trim={4cm 4cm 5cm 4cm},clip,width=\linewidth]{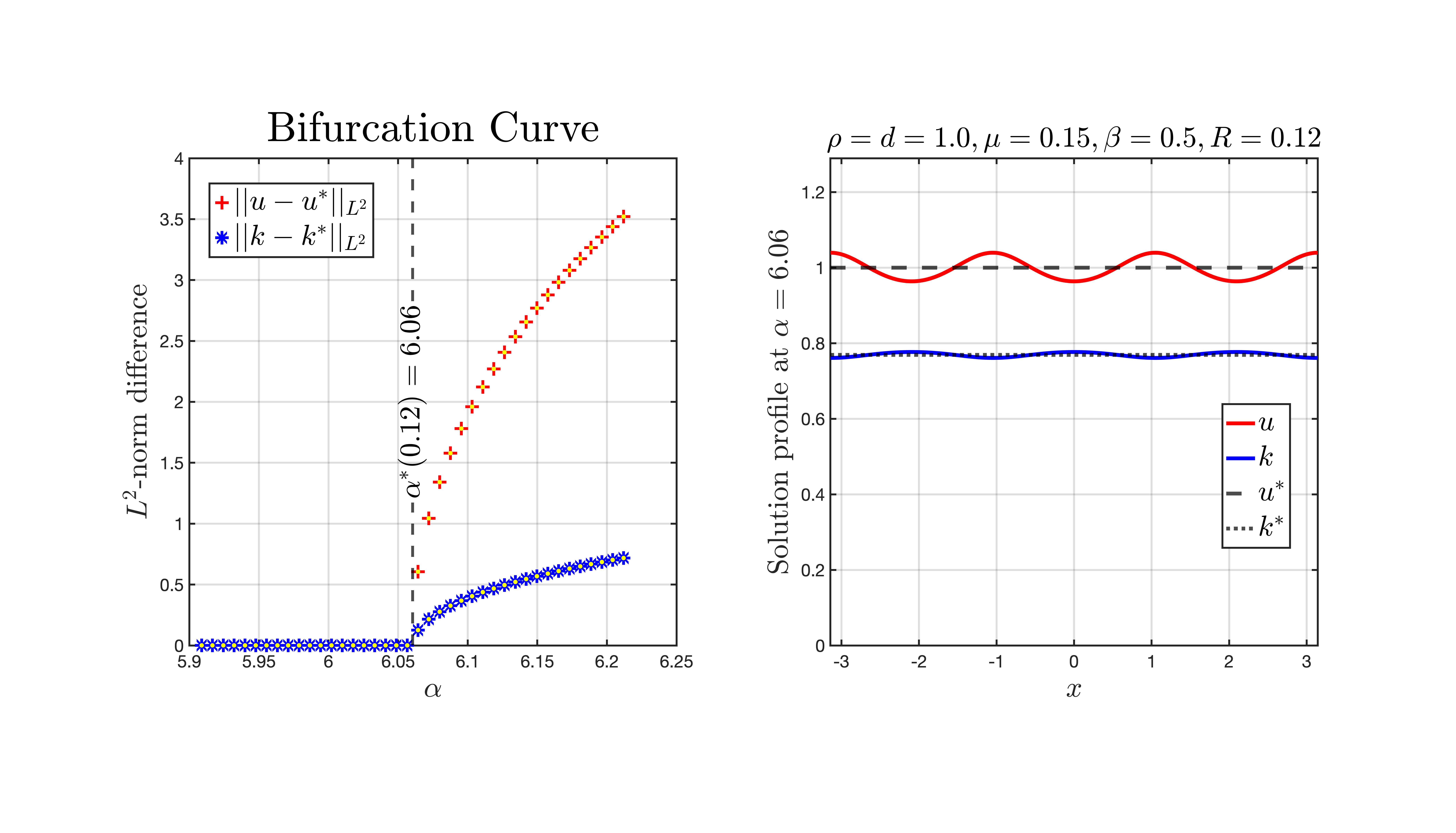}
    \caption{A bifurcation diagram near the critical threshold $\alpha^*$ when $R=0.12$ for \textbf{Example} \ref{ex2} of Section \ref{sec:apps} (left panel) and the solution profile at steady state just beyond the critical threshold (right panel).}
    \label{fig:5}
\end{figure}

\begin{figure}
    \centering
    \includegraphics[trim={4cm 4cm 5cm 4cm},clip,width=\linewidth]{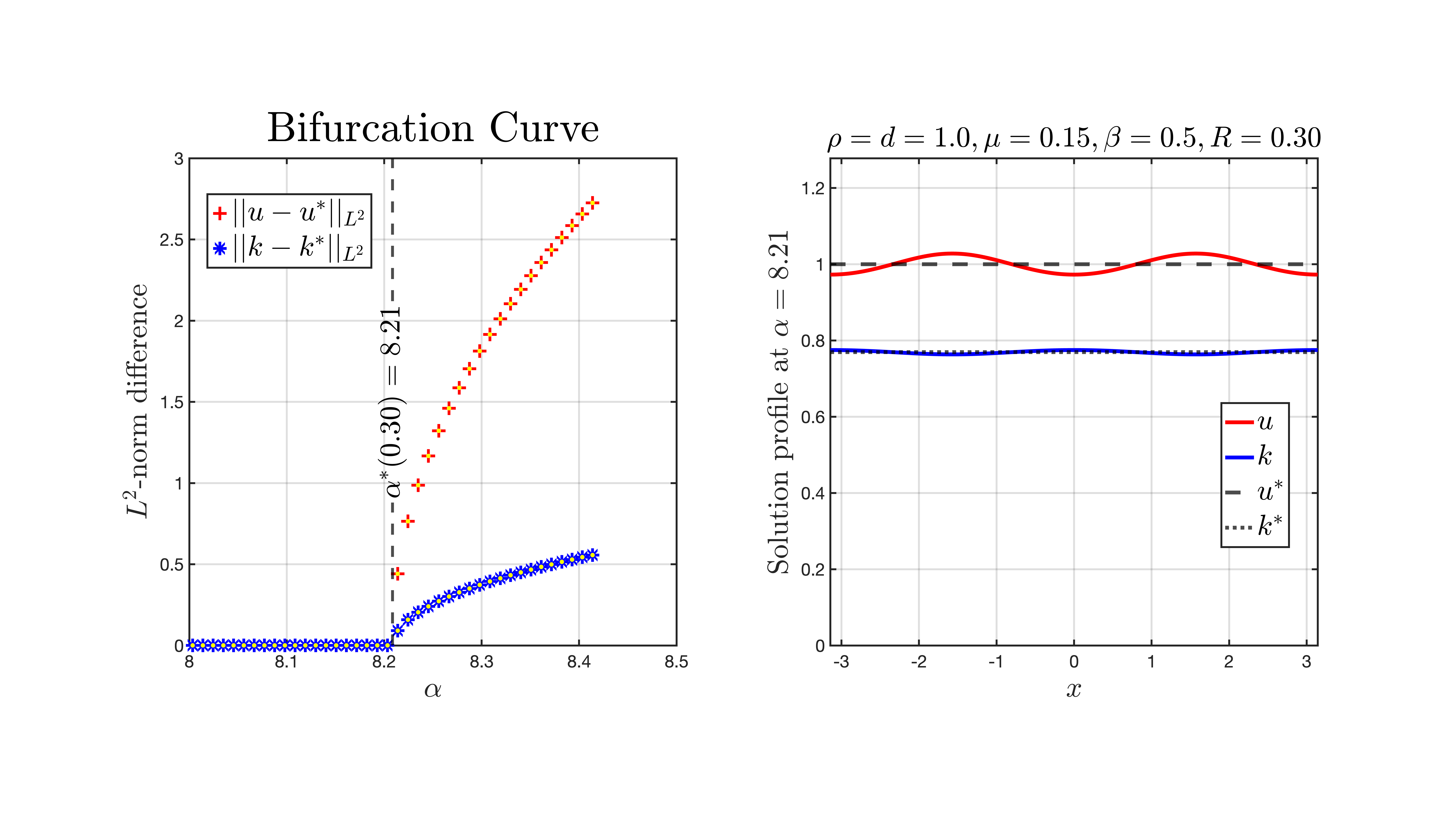}
    \caption{A bifurcation diagram near the critical threshold $\alpha^*$ when $R=0.30$ for \textbf{Example} \ref{ex2} of Section \ref{sec:apps} (left panel) and the solution profile at steady state just beyond the critical threshold (right panel).}
    \label{fig:6}
\end{figure}

\begin{figure}
    \centering
    \includegraphics[trim={4cm 4cm 5cm 4cm},clip,width=\linewidth]{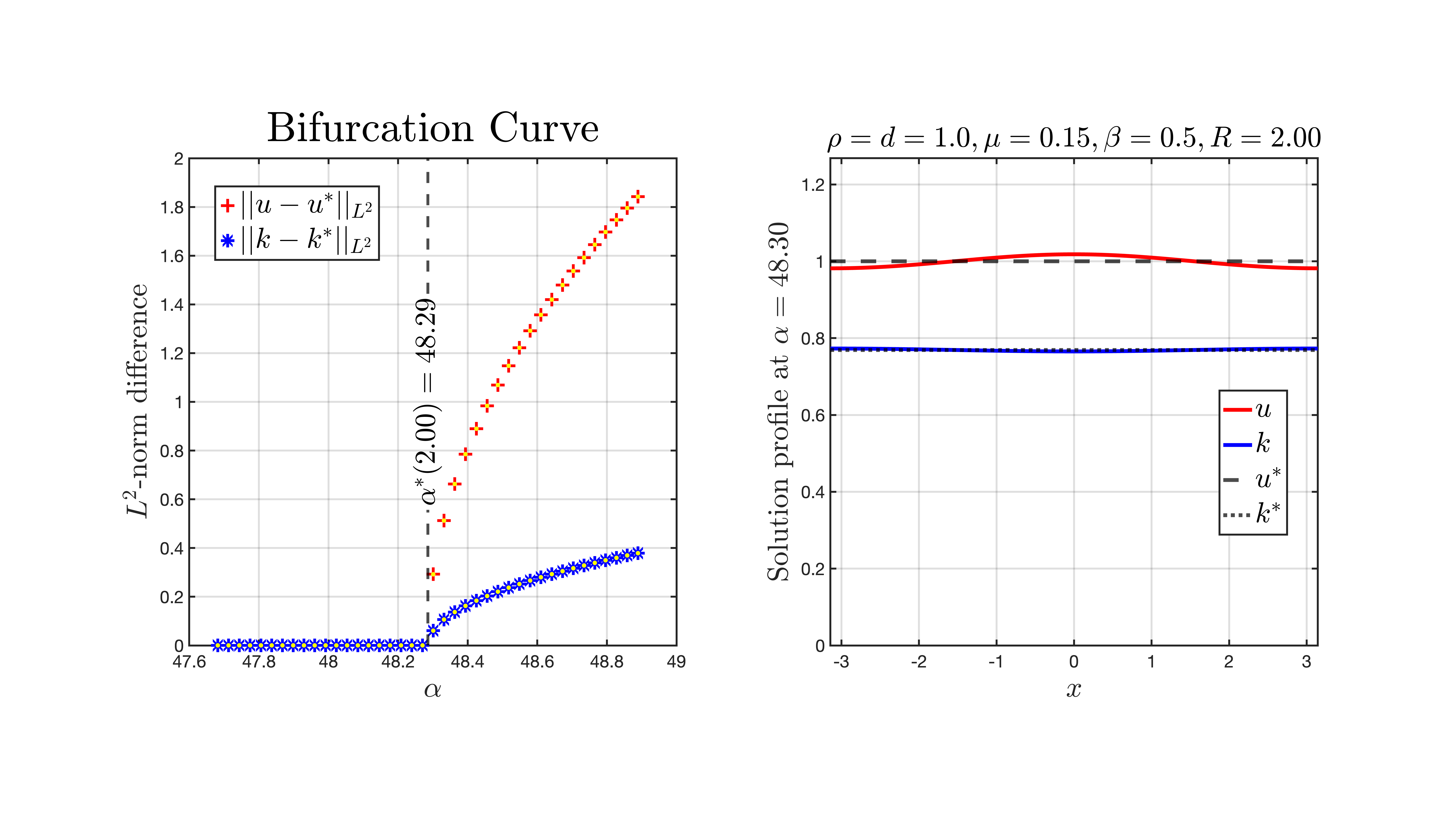}
    \caption{A bifurcation diagram near the critical threshold $\alpha^*$ when $R=2.00$ for \textbf{Example} \ref{ex2} of Section \ref{sec:apps} (left panel) and the solution profile at steady state just beyond the critical threshold (right panel).}
    \label{fig:7}
\end{figure}
 
\begin{examp}\label{ex3}
We conclude with an illuminating, though somewhat pathological, example to accentuate the solution behavior observed in \textbf{Example} \ref{ex1} and \textbf{Example} \ref{ex2}. For fixed parameters $0 < \varepsilon \ll 1 < \gamma < \infty$, we first define the auxiliary function $\tilde g : \mathbb{R}^+ \rightarrow (0,\varepsilon)$ by
$$
\tilde g(u) := \frac{\varepsilon}{2} \left( 1 + \frac{2}{\pi} \arctan [\gamma (u - u_*)] \right).
$$
This is a standard approximation of a step function, transitioning from $0$ to $\varepsilon$ near $u_*$, where the steepness of the transition is controlled by $\gamma$. Moreover, we have $\tilde g(u_*) = \varepsilon/2$ and $\tilde g^\prime (u_*) = \gamma \varepsilon/\pi$. 

We then fix $g_1(u) = g(u)$ for some smooth $g :\mathbb{R}^+ \rightarrow \mathbb{R}^+$ satisfying $g(0) = 0$ and $g^\prime (u_*) / g(u_*) > 0$. We then let $g_2(u) = \mu + \beta u + \tilde g(u)$ so that $\tilde g$ acts as a small perturbation of $\mu + \beta u$. One can verify that this combination always satisfies Hypotheses \textbf{\ref{H2a}}-\textbf{\ref{H2b}}. Then, the sign of $w_{u*}$ depends critically on
$$
\frac{g^\prime (u_*)}{g(u_*)} - \frac{\beta + (\gamma \varepsilon)/\pi}{\mu + \beta u_* + \varepsilon/2}.
$$
Therefore, for $\gamma$ sufficiently small the sign of $w_{u_*}$ is the same as the unperturbed case, i.e., $w_{u*} > 0$ as in \textbf{Example} \ref{ex1}. Alternatively, for $\varepsilon$ fixed we may choose $\gamma$ sufficiently large so that $w_{u*}<0$ as in \textbf{Example} \ref{ex2}. \textit{This means that the function $g_2(u) = \mu + \beta u + \tilde g (u)$, a small perturbation that can be made arbitrarily close to $\mu + \beta u$, can have entirely different dynamics than the unperturbed case.} For modelling movement and spatial memory, this highlights an interesting property of the population dynamics being described: if the excitatory and adaptory encoding processes $g_1$ and $g_2$, respectively, have a sharp enough interface at the state $u_*$, this will determine the direction for which aggregation will produce a patterned state. This indicates an intimate relation between the memory encoding processes and the description of the population-level dynamics, such as a change in habitat quality (e.g., a decreased carrying capacity) resulting in a change in the utility of the spatial map $k$. This also exemplifies that the \textit{relative} rates of change of $g_i$ in the map $k$ is the determining factor, not the \textit{absolute} values in the map $k$. We hope to explore the consequences of such observed behaviour in future work.
\end{examp}

\section{Without population growth dynamics}\label{sec:nogrowth}

\noindent To conclude, we briefly discuss system \eqref{1} without the reaction term, taking the form
\begin{equation}\label{1'}
	\begin{cases}
	u_t = d u_{xx} + \alpha ( u \overline{k}_{x} )_{x}, & x\in \mathbb{R}, \ t>0, \\
	k_t = g_1(u) - g_2(u) k, & x \in \mathbb{R}, \ t> 0, \\
	u(x,0)=u_0(x), \; k(x,0)=k_0(x),& x \in \mathbb{R}. \\
	\end{cases}
	\end{equation}
According to our prior analysis, the even, $2\pi$-periodic classical solution of system \eqref{1'} is equivalent to the following local parabolic-ordinary-elliptic system
\begin{equation}\label{1.18'}
   \begin{cases}
U_t = d U_{xx} + \alpha ( U V_{x})_{x}, & x\in (0,\pi), \ t>0, \\
K_t = g_1(U) - g_2(U) K, & x \in (0,\pi), \ t> 0, \\
0 = V_{xx}-\frac{1}{R^2}(V-K), & x \in (0,\pi), \ t> 0,\\
U_x(0,t)=U_x(\pi,t)=0,\ V_x(0,t)=V_x(\pi,t)=0,\ & t>0,
\end{cases} 
\end{equation} 
with initial data $(U(x,0), K(x,0), V(x,0)) =(u_0,k_0,G*k_0)$ and $V:=G*K$.

In this case, we only need Hypothesis \textbf{\ref{H2a}} and \textbf{\ref{H2b}}. The constant steady state of \eqref{1.18'} is
$$u_*=\frac{1}{\pi}\int_0^\pi u_0(x){\rm d}x, \ k_*=\frac{g_1(u_*)}{g_2(u_*)},\ v_*=k_*.$$ 
Notice that compared with the case when growth dynamics are included, the constant state is now determined by the total initial mass rather than the zero of the growth response $f(\cdot)$. From a linear stability analysis, we can define the critical aggregation strength for each wavenumber $n$ as in \eqref{1.13}:
\begin{equation}\label{1.13'}
    \alpha_n(R) := \dfrac{(1+n^2 R^2)d w_{k*}}{u_* w_{u*}}.
\end{equation}
If $w_{u*}>0\, (<0)$, then $\alpha_n(R)<0\, (>0)$ and $\alpha_n(R)$ is monotonically decreasing (increasing) with respect to $n$ for fixed $R$. We can then define critical aggregation strengths exactly as in \eqref{alpha*}; the key difference is that in the absence of population growth dynamics, a factor of $n^2$ vanishes and the critical wavenumber at which the maximum/minimum occurs is $n=1$, i.e., $\alpha_* (R) = \alpha_1 (R)$ (or $\alpha^*(R) = \alpha_1(R)$). Therefore, the patterned state, whenever it occurs, will always have a single peak, where the sign of $\alpha_1(R)$ (and the phase relationship between $u$ and $k$) is determined by the sign of $w_{u*}$.

Therefore, the local stability theorem of the constant steady state of system \eqref{1.18'} is the same as Theorem \ref{thm:1.2}. The bifurcation analysis now deviates from the previous one. 
Here we define a nonlinear mapping $F': \mathbb{R} \times X \times Y\times X \rightarrow Y^4$ by
\begin{equation}
F'(\alpha,u,k,v)=\left(\begin{array}{c}
d u_{xx} + \alpha ( u v_{x})_{x}
\\ 
g_1(u) - g_2(u) k\\
v_{xx}-\frac{1}{R^2}(v-k)\\
\int_0^\pi u(x) {\rm d}x-\int_0^\pi u_0(x) {\rm d}x
\end{array}\right).
\end{equation}
The Fr\'{e}chet derivative of $F'$ with respect to $(u,k,v)$ at $(u_*,k_*,v_*)$ is
\begin{equation}
    F'_{(u,k,v)}(\alpha_n,u_*,k_*,v_*)\left(\begin{array}{c}
\phi
\\ 
\varphi\\
\psi
\end{array}\right)=
\left(\begin{array}{c}
d \phi_{xx} + \alpha_n u_* \psi_{xx} 
\\ 
w_{u*} \phi + w_{k*} \varphi
\\
\psi_{xx}-\frac{1}{R^2}(\psi-\varphi)\\
\int_0^\pi \phi(x) {\rm d}x
\end{array}\right).
\end{equation}
Noting that $\mathcal{N}(F'_{(u,k,v)}(\alpha_n,u_*,k_*,v_*))=\mathcal{N}(F_{(u,k,v)}(\alpha_n,u_*,k_*,v_*))$. 
Therefore, the subsequent bifurcation analysis can be carried out similarly to the above analysis (in this case $f(u)\equiv0$), and Theorems \ref{thm:bif2} and \ref{thm:bif3} can be obtained similarly.

\section*{Acknowledgments}
YS is supported by the Natural Sciences and Engineering Research Council of Canada (NSERC Grant PDF-578181-2023). DL is supported by the China Postdoctoral Science Foundation under Grant Number 2024M763690. JS is supported by US-NSF grant OCE-2207343. HW gratefully acknowledges support from the Natural Sciences and Engineering Research Council of Canada (Discovery Grant RGPIN-2020-03911 and NSERC Accelerator Grant RGPAS-2020-00090) and the Canada Research Chairs program (Tier 1 Canada Research Chair Award).

 \begin{appendices}
 \section{Appendix}\label{sec:appendix}

    \subsection{Equivalence of Systems }\label{subsec:A1}

\begin{proof}[Proof of Proposition \ref{prop:equivalentsystem}] We start with statement i.). To this end, suppose $(u,k)$ is an even, $2\pi$-periodic classical solution solving system \eqref{1} with initial data $(u_0,k_0)$ even and satisfying \eqref{Hinitialdata}. Take $U,K : (0,\pi) \mapsto \mathbb{R}^+$ defined by
$$
U (x,t) = u(x,t) \bigr\vert_{(x,t) \in (0,\pi) \times (0,T)}, \quad K (x,t) = k(x,t) \bigr\vert_{(x,t) \in (0,\pi) \times (0,T)}.
$$

By the $2\pi$-periodic boundary and evenness of the solution $(u,k)$, there holds $u_x (0,t) = u_x(\pi,t) = 0$.  

Since $K$ is smooth, we can then define the auxiliary function $V := G * K$, which inherits the smoothness of $K$. Direct computation yields $V$ satisfying the following system
\begin{equation}\label{V}
   \begin{cases}
V_{xx}=\frac{1}{R^2}(V-K), & x \in (0,\pi), \ t> 0,\\
V_x(0,t)=V_x(\pi,t)=0,\ & t>0.
\end{cases} 
\end{equation} 
Therefore, we observe that the trio $(U,K,V)$ is a solution to the local Neumann problem \eqref{1.18} subject to the initial data $(U(x,0), K(x,0), V(x,0)) = (u_0(x), k_0(x), G* k_0(x))$. This proves part i.).

To prove ii.), we now assume that $(U,K,V)$ is a classical solution to the parabolic-ordinary-elliptic  problem \eqref{1.18} in $(0,\pi)$ with initial data $(u_0(x), k_0(x), G* k_0(x))$ defined in $(0,\pi)$ satisfying a homogeneous Neumann boundary condition. We then reflect $(U,K,V)$ across $0$ to define the auxiliary functions $(u,k,v)$ such that $$(u(-x,t), k(-x,t), v(-x,t)) = (U(x,t), K(x,t), V(x,t))$$ for all $t>0$, for each $x \in (0,\pi)$. Then we extend $(u,k,v)$ periodically over the whole space so that $(u,k,v) : \left[\mathbb{R} \times (0,T) \right]^3 \mapsto \left[ \mathbb{R}^+ \right]^3$ is well-defined. $(u,k,v)$ is then $2\pi$-periodic defined over all of $\mathbb{R}$, and satisfies the equations of \eqref{1.18} pointwise.

From the equation satisfied by $v$, we may solve for the general solution, which must be of the form
$$
v(x,t) = \frac{1}{2R} \int_\mathbb{R} e^{|x-y|/R} k(y,t) {\rm d}y + A(t) e^{x / R} + B(t) e^{-x/R} ,
$$
for some coefficients $A(t)$, $B(t)$. Since $v$ is spatially periodic, we find that $A \equiv B \equiv 0$ for all $t>0$, and so the unique solution $v$ is given precisely by
$$
v(x,t) = G * k(x,t) .
$$
Therefore, the pair $(u,k)$ is even, smooth, $2\pi$-periodic, satisfies $(u(x,0) , k(x,0)) = (u_0, k_0)$, and satisfies the equations of system \eqref{1}. Therefore, $(u,k)$ is an classical, even, $2\pi$-periodic solution of system \eqref{1}.
\end{proof}

\subsection{Proof of Lemma \ref{lem:uniqueness}}\label{sec:AppUniqueness}

\begin{proof}[Proof of Lemma \ref{lem:uniqueness}]
    We provide the details for the equivalent system \eqref{1.18}; an identical approach applies to the original system \eqref{1}. Suppose there are two strong solutions with common initial data, $(u_1, k_1, v_1)$ and $(u_2, v_2, k_2)$, solving problem \eqref{1.18}. We first set $U = u_1 - u_2$, $K = k_1 - k_2$, $V = v_1 - v_2$. By linearity $V$ satisfies
    $$
- V_{xx} + \frac{1}{R^2} V = \frac{1}{R^2} K .
    $$
    Multiplying by $V$, integrating by parts, and using Cauchy's inequality,y we have that
\begin{align}\label{est:u1.1}
    \int_\Omega \as{V_x}^2 \dx \leq \frac{1}{2R^2} \int_\Omega K^2 \dx.
\end{align}
    
    Taking the derivative of $\tfrac{1}{2} \left( \norm{U(\cdot,t)}_{L^2(\Omega)} + \norm{K(\cdot,t)}_{L^2(\Omega)} \right)$ with respect to time and integrating by parts yields
    \begin{align}\label{est:u1.3}
        \frac{1}{2} \frac{{\rm d}}{{\rm d}t} \int_\Omega (U^2 + K^2 ) \dx = & - d \int_\Omega \as{U_x}^2 \dx + \alpha \int_\Omega U_x \left[ u_1 (v_1)_x - u_2 (v_2)_x \right] \dx \nonumber \\
        + & \, \int_\Omega U^2 \left( \frac{f(u_1) - f(u_2)}{u_1 - u_2} \right) \dx \nonumber \\
        + & \int_\Omega K \left[ \left( \frac{g(u_1) - g(u_2)}{u_1 - u_2} \right) U - \beta (u_1 k_1 - u_2 k_2) \right] \dx \nonumber \\
        -& \, \mu \int_\Omega K^2 \dx .
    \end{align}
    We control each term on the right hand side via Cauchy's inequality. First, we can control the higher order terms using the boundedness of the strong solution over $Q_T$:
    \begin{align*}
        \alpha U_x \left[ u_1 (v_1)_x - u_2 (v_2)_x \right] =&\, \alpha \left[ U_{x} U (v_1)_x + U_x u_2 V_x  \right] \nonumber \\
        \leq &\, d \as{U_x}^2 + C_1 ( U^2 + \as{V_x}^2 ) ,
    \end{align*}
    where $C_1$ depends on $\alpha$, $d$, and the uniform bounds of $(v_1)_x$, $u_2$ over $Q_T$. Paired with estimate \eqref{est:u1.1}, there holds
    \begin{align}
        \alpha \int_\Omega U_x \left[ u_1 (v_1)_x - u_2 (v_2)_x \right] \dx &\leq d \int_\Omega \as{U_x}^2 \dx + C_1 \int_\Omega U^2 \dx + \frac{C_1}{R^2} \int_\Omega K^2 \dx .
    \end{align}
    We then estimate the lower order terms. Since $u_1$, $u_2$ are bounded over $Q_T$, the smoothness of $f,g$ (local Lipschitz continuity is sufficient) gives that
    $$
\sup_{u_1, u_2 \geq 0} \frac{f(u_1) - f(u_2)}{u_1 - u_2},\quad \sup_{u_1, u_2 \geq 0}\frac{g(u_1) - g(u_2)}{u_1 - u_2} \leq C_2,
    $$
    where $C_2$ is bounded for any fixed $T>0$. Lastly,
    $$
\beta K ( u_1 k_1 - u_2 k_2 ) = \beta K U k_1 + K^2 u_2 \leq C_3 ( K^2 + U^2),
    $$
 where $C_3$ depends on $\beta$ and the uniform bounds on $k_1$, $u_2$ over $Q_T$. Together, estimate \eqref{est:u1.3} becomes
 \begin{align}
            \frac{1}{2} \frac{{\rm d}}{{\rm d}t} \int_\Omega (U^2 + K^2 ) \dxi \leq & C_4 \int_{\Omega} ( U^2 + K^2 ) \dxi .
 \end{align}
 Gr{\"o}nwall's lemma implies that $U \equiv K \equiv 0$. Estimate \eqref{est:u1.1} then implies that $V$ is constant over $Q_T$ and is, therefore, identically zero, and uniqueness is proven.
\end{proof}

\subsection{Expression of \texorpdfstring{$\alpha_n'(0)$}{} and \texorpdfstring{$\alpha_n''(0)$}{}}\label{sec:AppdExpression}
 
The bifurcation direction of the bifurcating solutions in $\Gamma_n$ can be determined by the formula in \cite{Shi1999}, 
\begin{equation}
\begin{aligned}
\alpha_n'(0) &= -\dfrac{\langle l,F_{(u,k,v)(u,k,v)}(\alpha_n,u_*,k_*,v_*))[q_n, q_n]\rangle}{2\langle l,F_{\alpha(u,k,v)}(\alpha_n,u_*,k_*,v_*))[q_n]\rangle} 
\end{aligned}
\end{equation}
where $l\in Y^3$ is a linear function satisfying 
\begin{equation}
    \langle l,(h_1,h_2,h_3) \rangle = \int_0^{\pi} (h_1 + M^*_1 h_2 + M^*_2 h_3) \cos(nx) \dx .
\end{equation}
From 
\begin{equation}\label{1.36}
\begin{aligned}
  &\langle l,F_{(u,k,v)(u,k,v)}(\alpha_n,u_*,k_*,v_*))[q_n, q_n]\rangle\\
  = & \int_0^\pi [2 \alpha_n n^2 M_2 (\sin^2(nx)-\cos^2(nx))+f_{uu*}\cos^2(nx)]  \cos(nx) \dx\\
  &+\int_0^\pi M^*_1 (w_{uu*} + 2 w_{uk*} M_1 + w_{kk*} M_1^2 )\cos^2(nx) \cos(nx) \dx \\
  = & 0,
\end{aligned}
\end{equation}
we obtain $\alpha_n'(0)=0$. Therefore, a pitchfork bifurcation occurs at $\alpha=\alpha_n$.
We need to calculate $\alpha_n''(0)$ to determine the direction of the bifurcation, if $\alpha_n''(0)>0\;  (\alpha_n''(0)<0)$, the bifurcation is forward (backward). Moreover, $\alpha_n''(0)$ can be calculated as follows,
\begin{equation}\label{3.10}
\begin{aligned}
\alpha_n''(0) = & -\dfrac{\langle l,F_{(u,k,v)(u,k,v)(u,k,v)}(\alpha_n,u_*,k_*,v_*))[q_n, q_n, q_n]\rangle}{3\langle l,F_{\alpha(u,k,v)}(\alpha_n,u_*,k_*,v_*))[q_n]\rangle}\\
& -\dfrac{\langle l,F_{(u,k,v)(u,k,v)}(\alpha_n,u_*,k_*,v_*))[q_n,\Theta]\rangle}{\langle l,F_{\alpha(u,k,v)}(\alpha_n,u_*,k_*,v_*))[q_n]\rangle}
\end{aligned}
\end{equation}
where $\Theta=(\Theta_1,\Theta_2,\Theta_3)$ is the unique solution of 
\begin{equation}\label{3.11}
    F_{(u,k,v)(u,k,v)}(\alpha_n,u_*,k_*,v_*))[q_n, q_n]+F_{(u,k,v)}(\alpha_n,u_*,k_*,v_*))[\Theta]=0.
\end{equation}
We assume $\Theta=(\Theta_1,\Theta_2,\Theta_3)$ has the following form by \cite{ShiShiWang2021JMB} and \eqref{1.36}, 
\begin{equation}\label{3.12}
    \Theta_i=\Theta_i^1+\Theta_i^2 \cos(2nx),\ \ i=1,2,3. 
\end{equation}
Substituting \eqref{3.12} into \eqref{3.11}, we have
\begin{equation*}
    \begin{aligned}
        \left(\begin{array}{c}
f_{uu*}\cos^2(nx)-2\alpha_n M_2 n^2 \cos (2nx)
\\ 
(w_{uu*}+2w_{uk*}M_1+w_{kk*}M_1^2)\cos^2(nx)
\\
0
\end{array}\right)
+
\left(\begin{array}{c}
d(\Theta_1)_{xx}+f_{u*}\Theta_1+\alpha_n u_* (\Theta_3)_{xx}
\\ 
w_{u*}\Theta_1+w_{k*}\Theta_2
\\
(\Theta_3)_{xx}-\frac{1}{R^2}(\Theta_3-\Theta_2)
\end{array}\right)=0.
    \end{aligned}
\end{equation*}
From the above equation, we see that 
\begin{equation}\label{theta}
\begin{aligned}
    \Theta_1^1=&\Theta_1^2=-\frac{f_{uu*}}{2f_{u*}},\ \Theta_3^2=\frac{ M_2 \alpha_n n^2 f_{u*}-d n^2 }{2\alpha_n u_* n^2f_{u*}},\\
    \Theta_2^1&=\Theta_2^2=\Theta_3^1=(1+4n^2R^2)\Theta_3^2.
    \end{aligned}
\end{equation}
After tedious calculations, 
\begin{equation}\label{alpha'}
\begin{aligned}
       \alpha_n''(0)=& \frac{\dfrac{f_{uuu*}+M_1^*\overline{w}}{4}+f_{uu*}\left(\Theta_1^1+\dfrac{\Theta_1^2}{2}\right)-M_2 \alpha_n n^2\left(\Theta_1^1-\dfrac{\Theta_1^2}{2}\right)}{M_2 u_* n^2}\\
        & +\frac{-\alpha_n n^2 \Theta_3^2+M_1^*\left[w_1\left(\Theta_1^1+\dfrac{\Theta_1^2}{2}\right)+w_2 \left(\Theta_2^1+\dfrac{\Theta_2^2}{2}\right)\right]}{M_2 u_* n^2},
\end{aligned}
\end{equation}
where 
\begin{align*}
   f_{uu*}=f_{uu}(u_*),\ \ f_{uuu*}:=f_{uuu}(u_*),\ \ w_1=w_{uu*}+M_1w_{ku*},\ \ w_2=w_{uk*}+M_1w_{kk*},
\end{align*}
\begin{align*}
    \overline{w}=&[(w_{uuu*}+M_1(w_{kuu*}+w_{uku*})+M_1^2 w_{kku*})\\
    &+(w_{uuk*}+M_1(w_{kuk*}+w_{ukk*})+M_1^2 w_{kkk*})],
\end{align*}
and
\begin{align*}
  &  w_{uu*}:=w_{uu},\ w_{uk*}:=w_{uk},\  
  w_{ku*}:=w_{ku},\
  w_{kk*}:=w_{kk},\\ 
  &  w_{uuu*}:=w_{uuu},\
     w_{uuk*}:=w_{uuk},\
     w_{uku*}:=w_{uku},\ w_{ukk*}:=w_{ukk},\\ & w_{kuu*}:=w_{kuu},\
     w_{kuk*}:=w_{kuk},\ w_{kku*}:=w_{kku},\ w_{kkk*}:=w_{kkk},
\end{align*}
and $M_1, M_2$, and $M_1^*$ are defined in \eqref{1.24} and \eqref{M1*M2*}.
 \end{appendices}
	
	
	\bibliographystyle{elsarticle-num-names} 
	\bibliography{ref}
	
	
	
	
\end{document}